\newtheorem{thm}{Theorem}[section]
\newtheorem{lm}[thm]{Lemma}
\newtheorem{co}[thm]{Corollary}
\newtheorem{pr}[thm]{Proposition}
\theoremstyle{definition}
\newtheorem{df}[thm]{Definition}
\newtheorem{exm}[thm]{Example}
\newtheorem{rem}[thm]{Remark}
\newtheorem{rems}[thm]{Remarks}
\newtheorem{nota}[thm]{Notation}
\numberwithin{equation}{section}
\DeclareMathOperator{\arcsinh}{arcsinh}
\newcommand*{\wh}{\widehat}
\newcommand*{\wt}{\widetilde}
\newcommand{\Rad}{\mathop{\mathrm{Rad}}\nolimits}
\newcommand{\id}{\mathop{\mathrm{id}}\nolimits}
\newcommand{\ptn}{\mathbin{\widehat{\otimes}}}
\newcommand{\ad}{\mathop{\mathrm{ad}}\nolimits}
\newcommand{\SL}{\mathop{\mathrm{SL}}\nolimits}
\newcommand*{\lla}{\longleftarrow}
\newcommand*{\cF}{\mathscr F}
\newcommand{\cO}{\mathcal{O}}
\newcommand*{\cR}{\mathcal R}
\newcommand{\CC}{\mathbb{C}}
\newcommand{\R}{\mathbb{R}}
\newcommand{\Z}{\mathbb{Z}}
\newcommand{\N}{\mathbb{N}}
\newcommand{\T}{\mathbb{T}}
\newcommand*{\fA}{\mathfrak{A}}
\newcommand*{\fa}{\mathfrak{a}}
\newcommand*{\ff}{\mathfrak{f}}
\newcommand*{\fg}{\mathfrak{g}}
\newcommand*{\rT}{\mathrm T}
\renewcommand{\le}{\leqslant}
\renewcommand{\ge}{\geqslant}
\let \al         =\alpha
\let \be         =\beta
\let \de         =\delta
\let \te         =\theta        
\let \io         =\iota
\let \ka         =\varkappa
\let \la         =\lambda
\let \De         =\Delta
\let \Om         =\Omega
\let \phi         =\varphi
\title{Finitely $C^\infty$-generated associative and Hopf algebras}
\author{O. Yu. Aristov}
\address{Institute for Advanced Study in Mathematics of Harbin Institute of Technology, Harbin 150001, China;
\newline\indent
Suzhou Research Institute of Harbin Institute of Technology, Suzhou 215104, China}
\keywords{Banach algebra of polynomial growth, envelope with respect to a class of Banach algebras, free $C^\infty$-function, universal enveloping algebra, quantum  SL(2), deformation of the two-dimensional Lie algebra, $C^\infty$-tensor algebra, $C^\infty$-symmetric algebra, finitely $C^\infty$-generated algebra, topological Hopf algebra}
\email{aristovoyu@inbox.ru}
\begin{document}

\maketitle

\markright{Finitely $C^\infty$-generated algebras}

\begin{abstract}
We introduce finitely $C^\infty$-generated algebras, which can be treated as `algebras of functions' on non-commutative $C^\infty$-differentiable spaces. Our approach uses the category of  projective limits of real Banach algebras of polynomial growth. We prove the existence of some universal constructions in this and some similar categories. By analogy with holomorphically finitely generated algebras of Pirkovskii, a finitely $C^\infty$-generated algebra is defined as a quotient of a finite-rank algebra of `free $C^\infty$-functions'. The latter notion was introduced by the author in a previous article, where a structure theorem for algebras of `free $C^\infty$-functions' was announced and proved in dimension at most $2$. Here this theorem is proved in full generality.

The central result asserts that the projective tensor product of a finite tuple of finitely $C^\infty$-generated algebras is finitely $C^\infty$-generated. In particular, this makes it natural to consider finitely $C^\infty$-generated topological Hopf algebras. Furthermore, a construction called `envelope' provides a functor from the category of affine real Hopf algebras to the category of finitely $C^\infty$-generated Hopf algebras.
\end{abstract}

\section*{Introduction}

We consider (in general, non-commutative) algebras that can be treated as the algebras of 'functions of class $C^\infty$' on non-commutative spaces and generalize not only algebras of $C^\infty$ functions on manifolds but also $C^\infty$-differentiable algebras. Algebras of free $C^\infty$-functions (aka $C^\infty$-tensor algebras) were introduced by the author as envelopes of free algebras with respect to the class of Banach algebras of polynomial growth \cite{ArNew}. Here we discuss \emph{finitely $C^\infty$-generated algebras}, i.e.,  quotients of algebras of free $C^\infty$-functions of finite rank. It is shown in \cite{ArNew} that every  $\mathsf{PGL}$ algebra (i.e., a projective limit of real Banach algebras of polynomial growth) is a quotient of a $C^\infty$-tensor algebra but the additional imposition of the finiteness condition  makes the class of quotients best behaved in many ways, e.g., it is well compatible with the projective tensor product. For example, it is not clear whether it makes sense to consider topological Hopf algebras with general underlying $\mathsf{PGL}$ algebra as a separate class, but in \S\,\ref{s:Hopf} we show that this is justified for finitely $C^\infty$-generated algebras. Moreover, in the affine case  the envelope inherits the Hopf algebra structure. In particular, this holds for finitely generated universal enveloping algebras and the quantum $SL(2)$.

Our approach is similar to that in non-commutative generalization of algebras of holomorphic functions given by Pirkovskii  who introduced \emph{holomorphically finitely generated algebras} --- non-commutative generalizations of Stein algebras --- as quotients of finitely generated algebras of free holomorphic functions \cite{Pi14, Pi15}. Thus we follow the same route but in the case of $C^\infty$ functions. 

Finitely $C^\infty$-generated algebras are a non-commutative generalization of $C^\infty$-differentiable algebras (Proposition~\ref{cifgcial}). Recall that the spectrum of a $C^\infty$-differentiable algebra is an affine $C^\infty$-differentiable space in the sense of Spallek whose theory is one of possible extensions of differential  geometry to spaces possibly admitting singularities \cite{NaSa}. Thereby $C^\infty$-generated algebras can be treated as an object of study in non-commutative geometry. However, it is not yet clear how they relate to NC differential geometry \`{a} la Connes but there is at least one  point in common: both theories use $C^\infty$ functional calculus.

Since the envelopes of finitely generated commutative real algebras are $C^\infty$-differentiable algebras, which have a geometric interpretation as mentioned above, we expect the envelope functor to be an avatar of a geometric functor similar to the analytization. But the corresponding theory in the $C^\infty$-case is still not well developed. Only a few hints can be found in the unpublished notes \cite[\S\,3.10]{Gi13} and the preprint \cite[\S\,2.6]{GM15}. Surely this issue deserves a separate study.

\subsection*{Algebras of polynomial growth}
Operators (and elements of Banach algebras) of polynomial growth is a classical topic \cite{CF68,LN00}. Banach algebras  of polynomial growth (i.e., such that all the elements are of polynomial growth) are introduced in \cite{ArOld}. Note that this assumption is quite restrictive, e.g., the ground field must be~$\R$. It is also proved in \cite[Theorem 2.8 and Proposition 2.9]{ArOld} that  every Banach algebra of polynomial growth is commutative modulo Jacobson radical and the radical is nilpotent.  Nevertheless, as  the author shows in \cite{ArOld,Ar22,ArNew}, the corresponding theory is rich enough to become an interesting field of research.

The next step is to move on to algebras that can be approximated by  Banach algebras  of polynomial growth  ($\mathsf{PGL}$ algebras). The prototypical example of a $\mathsf{PGL}$ algebra is $C^\infty(\R^k)$, where $k\in\N$. The most important feature of a $\mathsf{PGL}$ algebra is that every its element admits a $C^\infty$-functional calculus; see \cite[Theorem 3.2]{ArOld}.

\subsection*{Envelopes}
The most direct way to obtain a $\mathsf{PGL}$ algebra is to apply an enveloping functor.
The notion of an \emph{envelope with respect to a given class of Banach algebras} was introduced in \cite{ArNew} but, in an implicit form, it appeared earlier in \cite{ArOld} for the class of Banach algebras  of polynomial growth. Our study is based on envelopes with respect to this class.  It is also noted in \cite{ArDAAF} and \cite{ArNew} that there are other interesting classes. Here is an extended list:

---  all Banach algebras;

---  Banach algebras of polynomial growth (notation is $\mathsf{PG}$);

--- strictly real Banach algebras in the sense of~\cite{In64};

---  Banach algebras satisfying a polynomial identity \cite{ArDAAF};

--- Dedekind-finite Banach algebras (see Remark~\ref{Dede});

--- operator algebras (possibly non-selfadjoint closed subalgebras of the algebra of bounded operators in a Hilbert space).

(Note that the envelope with respect to the class of operator algebras is not the same as the enveloping operator algebra of a Banach algebra as described, e.g., in  \cite[2.4.6]{BlMe}).

If a class $\mathsf{C}$ of Banach algebras satisfies some natural conditions, the corresponding category  $\mathsf{CL}$ of projective limits admits a tensor algebra construction; see \cite[\S\,1]{ArNew}. Here we also consider other universal constructions  in $\mathsf{CL}$: symmetric algebras, coproducts, colimits and tensor products. In particular, we may put $\mathsf{C}=\mathsf{PG}$.

\subsection*{Finitely $C^\infty$-generated algebras}

In~\cite{ArNew}, algebras of free $C^\infty$-functions are introduced and a structure theorem in the case of finite number (say, $k$) of generators is announced for general $k$ and proved when $k\le 2$. The proof in the case of general finite $k$ is postponed to this text and the reader can find it in Appendix~\ref{sec:efa}. Here we are mainly interested in finitely $C^\infty$-generated algebras, which form a special subclass of $\mathsf{PGL}$ algebras.

We give some general theory of finitely $C^\infty$-generated algebras in \S\,\ref{s:finCial} and  show that this class is stable under finite projective tensor product (Theorem~\ref{tensfing}) --- the fact that enables us to introduce not only associative but also finitely $C^\infty$-generated topological Hopf algebras; see the next section. Some examples have already appeared in~\cite{ArNew}. Here we also consider  a new example, a $C^\infty$-version of a deformation of the two-dimensional non-abelian real Lie algebra~$\fa\ff_1$; see \S\,\ref{s:af1}.

\subsection*{Hopf algebras}
Recall that the main axiom of Hopf algebra states that the following diagram commutes:
\begin{equation*}
  \xymatrix{
 H\mathbin\otimes H\ar[d]^{S\mathbin\otimes 1}&\ar[l]_{\De}H\ar[d]^{u\varepsilon}\ar[r]^{\De}
 &H\mathbin\otimes H\ar[d]^{1\mathbin\otimes S}\\
H\mathbin\otimes H\ar[r]_m&H&H\mathbin\otimes H\ar[l]^m \,.}
\end{equation*}
Here $\De$ is the comultiplication, $m$ is the multiplication, $u\!:k\to H$ is the unit,
$\varepsilon\!:H\to k$ is the counit, $S\!:H\to H$ is the antipode, and $k$ is the ground field. In fact, $H\mathbin\otimes H$ here denotes  two different objects, an algebra when it is the codomain of $\De$ and a linear space when it is the domain of~$m$. They coincide, but it is a happy coincidence  that allows  this axiom to be formulated.

Turning to functional analysis, note that, for example, in the theory of $C^*$-algebraic quantum groups there is a need to work with two different tensor products, the Haagerup and the minimal. The difference between them leads to difficulties in definitions; see \cite{VvD01}. Fortunately, in a more general context, such a problem rarely occurs.  Indeed, the category of complete locally convex spaces (over $\CC$ or $\R$) is a~monoidal category  with respect to the bifunctor $(-)\ptn(-)$ of complete projective tensor product. A Hopf algebra in this category  is called a~\emph{Hopf $\ptn$-algebra}. This notion is well defined for the same reason as in the algebraic case: the complete projective tensor product of two associative $\ptn$-algebras is again an associative  $\ptn$-algebra. Various aspects of the theory of Hopf $\ptn$-algebras  are discussed in  \cite{BFGP,Li78,Pir_stbflat,PS98,HWa24} and also in the author's papers \cite{Ar21,Ar22B,ArDAAF,AHHFG, Ar24}.

If we restrict ourselves to the subclass consisting of Arens--Michael algebras (projective limits of Banach algebras) and want to endow some Arens--Michael algebra~$H$ with a structure of topological Hopf algebra (in a certain sense), then it is natural to assume that the codomain of the comultiplication is the Arens--Michael tensor product $H\mathbin{\otimes^{\,\mathsf{AM}}\!}H$. Since this algebra is isomorphic to $H\ptn H$ (see  Remark~\ref{exdeCitp}(B)), there is no need to introduce a new concept and go beyond Hopf $\ptn$-algebras. This implies that the Arens--Michael enveloping functor preserves Hopf $\ptn$-algebra structure; see \cite[Proposition 6.7]{Pir_stbflat}. Thus, it seems natural to study the subcategory of Arens--Michael Hopf algebras.

If we further narrow the class of Arens--Michael algebras and consider an HFG (stands for `holomorphically finitely generated') algebra~$H$ in the sense of Pirkovskii \cite{Pi14,Pi15} as underlying for a Hopf $\ptn$-algebra, then we can use the fact  that the codomain of the comultiplication, $H\ptn H$ is also HFG. So we get another natural subclass,  HFG Hopf algebras. They are studied in detail in~\cite{AHHFG}.

Considering $\mathsf{PGL}$  algebras instead of Arens--Michael and HFG algebras, we see that the situation is more subtle. The author does not know whether the enveloping functor with respect to $\mathsf{PG}$ preserves Hopf $\ptn$-algebra structure. This is because that we are only able to prove stability under $(-)\ptn(-)$  for the possibly smaller class $\mathsf{QDTS}$ but not for the whole $\mathsf{PGL}$; see Remark~\ref{PSDTvsPGL} and Theorem~\ref{prliCitp}. Of course, one can formally define $\mathsf{PGL}$ Hopf algebras, but at the moment this definition does not seem natural. On the other hand, Theorem~\ref{tensfing} asserts that the smaller class of finitely $C^\infty$-generated algebras is nevertheless stable under $(-)\ptn(-)$. Thus it is taken for granted to introduce a notion parallel to that of HFG Hopf algebra. We say that a real Hopf $\mathbin{\widehat{\otimes}}$-algebra finitely $C^\infty$-generated as a $\mathsf{PGL}$  algebra is  a \emph{$C^\infty$-finitely generated Hopf algebra}. We discuss this topic in \S\,\ref{s:Hopf}.

\subsection*{Contents}
In \S\,\ref{s:env} we recall some facts from \cite{ArNew} and also prove  new results on envelopes with respect to a class of Banach algebras. \S\,\ref{s:PGL} is devoted to Banach algebras of polynomial growth and $\mathsf{PGL}$ algebras. In \S\,\ref{s:uncon} we discuss some universal constructions in the category of projective limits of Banach algebras belonging to a certain class, namely, the tensor and symmetric algebra, coproduct, colimit and tensor product. Main properties of finitely $C^\infty$-generated algebras are proved in \S\,\ref{s:finCial}. A non-trivial example, a deformation of the two-dimensional non-abelian real Lie algebra~$\fa\ff_1$, is  contained in \S\,\ref{s:af1}. Finitely $C^\infty$-generated Hopf algebras are discussed in \S\,\ref{s:Hopf}. To prove results on projective products we use an explicit description of  $\cF_{k}^{\,\mathsf{PG}}$, the algebra of free $C^\infty$-functions of finite rank~$k$. This description is announced in~\cite{ArNew} but the proof is only given  in the case when $k\le2$. The argument is technical and we place the proof in the general case in Appendix~\ref{sec:efa}.


\subsection*{Acknowledgment}
A  part of this work was done during the author's visit to the HSE University (Moscow) in the summer of 2024.  I wish to thank this university
 for the hospitality.

\section{Envelopes}
\label{s:env}

In this section we discuss envelopes, a concept that formalizes the procedure of approximating a topological algebra by Banach algebras of a certain class.

Note that we consider topological algebras with separately continuous multiplication.  Also, all algebras and homomorphisms  are assumed to be unital.
The category of topological algebras is denoted by $\mathsf{TA}$.

\begin{df}\label{loccond} \cite[Definition~1.1%
]{ArNew}
Let  $\mathsf{C}$ be a class of Banach algebras (over $\CC$ or $\R$). We denote by $\mathsf{CL}$ the class of topological algebras isomorphic to a projective limit of algebras contained in~$\mathsf{C}$.  We also use notation $\mathsf{CL}$   for the corresponding full subcategory of $\mathsf{TA}$. If $A\in \mathsf{CL}$ we say that $A$ is \emph{locally in $\mathsf {C}$} or a \emph{$\mathsf{CL}$ algebra}.
\end{df}

\begin{df}\label{defengen} \cite[Definition~1.2%
]{ArNew}

(A)~Let $\mathsf{C}$ be a class of Banach algebras, $\mathsf{CL}$  defined as in Definition~\ref{loccond} and $F\!:\mathsf{CL}\to \mathsf{TA}$ the corresponding forgetful functor. If $F$ admits a left adjoint functor~$L$, we call $L$ the \emph{enveloping functor with respect to $\mathsf{C}$}.

(B)~Denote by $\io_A$ the component of the identity adjunction (the functor morphism $\id_{\mathsf{TA}}\Rightarrow F\circ L$) corresponding to a topological algebra~$A$  and put $\wh A^{\,\mathsf{C}}\!:=(F\circ L)(A)$. We say that the pair $(\wh A^{\,\mathsf{C}}, \io_A)$ is  the \emph{envelope of~$A$ with respect  to $\mathsf{C}$}.
\end{df}

In the case when $\mathsf{C}$ is the class of all Banach algebras we get the standard definition of the Arens--Michael envelope; see \cite{X1}.

\begin{pr}\label{exisenvgen}  \cite[Proposition~1.4%
]{ArNew}
Let $\mathsf{C}$ be a class of Banach algebras stable under passing to finite products and closed subalgebras. Then the enveloping functor with respect to~$\mathsf{C}$ exists.
\end{pr}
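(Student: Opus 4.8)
The plan is to construct the envelope explicitly as a projective limit and then to verify that it carries the universal property that defines the left adjoint $L$ of the forgetful functor $F\colon\mathsf{CL}\to\mathsf{TA}$. First I would fix a topological algebra $A$ and consider the set $\mathcal P_A$ of all continuous submultiplicative seminorms $p$ on $A$ for which the Banach algebra $A_p$, obtained by completing $A/\ker p$ in the norm induced by $p$, belongs to $\mathsf C$. Since every such $p$ is a function $A\to[0,\infty)$, the collection $\mathcal P_A$ is genuinely a set, so no set-theoretic difficulty arises. Each continuous homomorphism $f\colon A\to B$ into a Banach algebra $B\in\mathsf C$ produces such a seminorm $p_f(a)=\|f(a)\|_B$, because $A_{p_f}$ is isometrically the closure of $f(A)$ in $B$, a closed subalgebra of $B$ and hence a member of $\mathsf C$; conversely, every $p\in\mathcal P_A$ arises this way from the canonical map $A\to A_p$. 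Thus $\mathcal P_A$ parametrizes, up to isomorphism, all continuous homomorphisms from $A$ into algebras of $\mathsf C$.

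The key step is to show that $\mathcal P_A$ is directed under the pointwise order. Given $p,q\in\mathcal P_A$, set $r=\max(p,q)$; a short computation shows that $r$ is again a continuous submultiplicative seminorm. The diagonal map $A\to A_p\times A_q$ pulls the (submultiplicative) maximum norm back to $r$, so $A_r$ is isometrically isomorphic to the closure of the image of $A$ in $A_p\times A_q$. Here both hypotheses on $\mathsf C$ enter decisively: $A_p\times A_q\in\mathsf C$ because $\mathsf C$ is stable under finite products, and its closed subalgebra $A_r$ lies in $\mathsf C$ because $\mathsf C$ is stable under closed subalgebras. Hence $r\in\mathcal P_A$ with $r\ge p$ and $r\ge q$. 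I would then set
\[
\wh A^{\,\mathsf C}:=\varprojlim_{p\in\mathcal P_A}A_p ,
\]
a projective limit of algebras of $\mathsf C$ and therefore a $\mathsf{CL}$ algebra, together with the canonical homomorphism $\io_A\colon A\to\wh A^{\,\mathsf C}$ assembled from the maps $A\to A_p$; its image is dense.

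It remains to verify the universal property, i.e., that $\io_A$ is initial among continuous homomorphisms from $A$ to $\mathsf{CL}$ algebras. Given $C\in\mathsf{CL}$, write $C=\varprojlim_\alpha C_\alpha$ with $C_\alpha\in\mathsf C$, and let $g\colon A\to C$ be a continuous homomorphism. Each composite $A\to C\to C_\alpha$ has the form treated above, so its associated seminorm lies in $\mathcal P_A$ (again using the closed-subalgebra hypothesis, applied to the closure of the image in $C_\alpha$). Consequently $g$ factors through the corresponding projection $\wh A^{\,\mathsf C}\to A_{p_\alpha}$; these factorizations are compatible with the transition maps of $C$ and assemble into a continuous homomorphism $\tilde g\colon\wh A^{\,\mathsf C}\to C$ with $\tilde g\circ\io_A=g$. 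Uniqueness of $\tilde g$ follows from the density of the image of $\io_A$. This yields a bijection $\Hom_{\mathsf{CL}}(\wh A^{\,\mathsf C},C)\cong\Hom_{\mathsf{TA}}(A,F(C))$, natural in $A$ and $C$; hence $\wh{(-)}^{\,\mathsf C}$ is the sought left adjoint $L$, and the enveloping functor exists.

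I expect the directedness argument to be the main obstacle, since it is the only place where both closure hypotheses on $\mathsf C$ are indispensable, and it is precisely what turns the merely partially ordered family $\mathcal P_A$ into an honest projective system of algebras of $\mathsf C$. The remaining work — checking that $\max(p,q)$ stays submultiplicative, that $A_r$ is the claimed closed subalgebra, and that the factorizations over the index $\alpha$ are mutually compatible — is routine but should be carried out carefully to guarantee naturality of the adjunction bijection.
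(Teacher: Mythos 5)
Your proposal is correct. The paper does not actually reproduce a proof of this proposition (it is quoted from \cite{ArNew}), but the argument there is the standard one you have reconstructed: the directed family of continuous submultiplicative seminorms $p$ with $A_p\in\mathsf{C}$, with directedness coming exactly from stability under finite products and closed subalgebras via $\max(p,q)$ and the diagonal embedding into $A_p\times A_q$, followed by the projective limit $\varprojlim_p A_p$ with dense image of $A$ and the coordinatewise verification of the universal property.
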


For efficient computation of envelopes, an additional  stability condition is useful.

\begin{pr}\label{PGLqugen}
Let $\mathsf{C}$ be a class of Banach algebras stable under passing to closed subalgebras and quotients (over closed ideals). If an algebra is locally in $\mathsf{C}$, then so is the completion of each of its quotient over a closed ideal.
\end{pr}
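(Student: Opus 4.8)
The plan is to realize $A/I$ explicitly as a dense subalgebra of a projective limit of quotients of the given Banach algebras, and then to identify that projective limit with the completion. First I would fix a representation $A\cong\varprojlim_\lambda A_\lambda$ with each $A_\lambda\in\mathsf C$, together with the canonical continuous homomorphisms $\sigma_\lambda\colon A\to A_\lambda$. Replacing each $A_\lambda$ by the closure $\overline{\sigma_\lambda(A)}$, which still lies in $\mathsf C$ by stability under closed subalgebras, I may assume $\sigma_\lambda(A)$ is dense in $A_\lambda$; the defining seminorms of $A$ are then $p_\lambda(a)=\|\sigma_\lambda(a)\|_{A_\lambda}$.

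Next, for the given closed ideal $I\subseteq A$, set $I_\lambda:=\overline{\sigma_\lambda(I)}\subseteq A_\lambda$. A short continuity argument, using joint continuity of multiplication in a Banach algebra, shows that $I_\lambda$ is a closed two-sided ideal of $A_\lambda$, so the Banach algebra $A_\lambda/I_\lambda$ belongs to $\mathsf C$ by stability under quotients. Writing $\pi\colon A\to A/I$ for the quotient map and noting that $\sigma_\lambda(I)\subseteq I_\lambda$, the composite $A\to A_\lambda\to A_\lambda/I_\lambda$ kills $I$ and hence factors through a homomorphism $\tau_\lambda\colon A/I\to A_\lambda/I_\lambda$. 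These maps are compatible with the transition maps of the system and assemble into $\tau\colon A/I\to\varprojlim_\lambda A_\lambda/I_\lambda$.

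The key step is to verify that $\tau$ is a topological embedding with dense image. For the topology I would compare two families of seminorms on $A/I$: the quotient seminorms $\bar p_\lambda(\pi(a))=\inf_{x\in I}p_\lambda(a-x)$, which generate the quotient topology, and the pullbacks $q_\lambda(\pi(a))=\|\tau_\lambda(\pi(a))\|=\dist_{A_\lambda}(\sigma_\lambda(a),I_\lambda)$. Since the distance from a point to a set equals the distance to its closure and $I_\lambda=\overline{\sigma_\lambda(I)}$, one obtains $q_\lambda=\bar p_\lambda$, so the two topologies on $A/I$ coincide. Density of $\tau(A/I)$ follows from density of each $\sigma_\lambda(A)$ in $A_\lambda$, which forces $\sigma_\lambda(A)+I_\lambda$ to be dense and hence $\tau_\lambda(A/I)$ dense in $A_\lambda/I_\lambda$. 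Since $A/I$ is Hausdorff ($I$ being closed) and $\varprojlim_\lambda A_\lambda/I_\lambda$ is complete, the completion of $A/I$ is exactly $\varprojlim_\lambda A_\lambda/I_\lambda$, a projective limit of algebras in $\mathsf C$; this is precisely the assertion that it is locally in $\mathsf C$.

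The main obstacle I anticipate is the seminorm identification $q_\lambda=\bar p_\lambda$: the quotient seminorm is computed as an infimum over $I$, whereas the norm of $A_\lambda/I_\lambda$ is an infimum over the larger closed ideal $I_\lambda$, and one must be sure these infima agree. The elementary observation that $\dist(z,S)=\dist(z,\overline S)$ resolves this cleanly, but it is the point where stability under quotients (to pass from $A_\lambda$ to $A_\lambda/I_\lambda\in\mathsf C$) and the density reduction (to make $I_\lambda$ the closure of $\sigma_\lambda(I)$) genuinely interact. The remaining verifications --- that $I_\lambda$ is a closed ideal and that the quotient topology is generated by the quotient seminorms --- are routine facts about locally convex algebras.
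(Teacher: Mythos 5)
Your proposal is correct, and its skeleton coincides with the paper's: reduce to the case where each $\sigma_\lambda$ has dense range (using stability under closed subalgebras), observe that $I_\lambda:=\overline{\sigma_\lambda(I)}$ is then a closed ideal so that $A_\lambda/I_\lambda\in\mathsf{C}$ (using stability under quotients), and identify $(A/I)\sptilde$ with $\varprojlim_\lambda A_\lambda/I_\lambda$. The difference lies in how that last identification is established. The paper isolates it as a separate lemma and, after noting that dense ranges make the projective limit an Arens--Michael decomposition of $A$, simply cites Pirkovskii's result \cite[Lemma 7.1]{Pi09}; you instead prove the identification from scratch, matching the quotient seminorms $\bar p_\lambda$ against the pulled-back norms $q_\lambda$ via $\dist(z,S)=\dist(z,\overline S)$, and then invoking density and completeness. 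Your route is more elementary and self-contained, and it makes visible exactly where the two stability hypotheses and the closure-of-image construction interact; the paper's route is shorter and delegates the analytic content to a known lemma. One step you leave implicit deserves a sentence: from density of each $\tau_\lambda(A/I)$ in $A_\lambda/I_\lambda$ you conclude density of $\tau(A/I)$ in the projective limit. This is true, but it uses that the index set is directed, so that neighborhoods determined by a single index $\mu$ (after composing with the continuous linking maps) form a base of the limit topology; without directedness, componentwise density does not in general give density in the limit. With that remark added, your argument is a complete replacement for the paper's appeal to \cite{Pi09}.
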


Note that in this text `ideal' stands for 'two-sided ideal'.

For the proof of Proposition~\ref{PGLqugen} we need a lemma. Recall first that the quotient of a complete locally convex space over a closed subspace is complete only under extra assumptions, e.g., in the metrizable case. The completion of a topological algebra $A$ is denoted by $\widetilde A$ or $A\sptilde$.

\begin{lm}\label{proquo}
Let $A$ be a projective limit of a system $(A_\nu)$ of Banach algebras, $(\tau_\nu\!:A\to A_\nu)$ the corresponding projective cone, and $I$ a closed ideal in~$A$. If the  range of $\tau_\nu$ is dense in  $A_\nu$ for every $\nu$, then $(A/I)\sptilde$ is the projective limit of the naturally defined directed system $(A_\nu/\,\overline{\tau_\nu (I)}\,)$ of Banach algebras.
\end{lm}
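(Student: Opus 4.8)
The plan is to exhibit the candidate algebra $B:=\varprojlim_\nu A_\nu/\,\overline{\tau_\nu(I)}$ as the completion of $A/I$ by constructing a continuous homomorphism $A/I\to B$ that is a topological embedding with dense range. Since $B$, being a projective limit of Banach algebras, is complete, this identifies $B$ with $(A/I)\sptilde$.

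First I would verify that the system in the statement is well defined. For $\mu\succeq\nu$ let $\sigma_{\nu\mu}\colon A_\mu\to A_\nu$ be the connecting homomorphism, so that $\sigma_{\nu\mu}\tau_\mu=\tau_\nu$. Then $\sigma_{\nu\mu}(\tau_\mu(I))=\tau_\nu(I)$, whence by continuity $\sigma_{\nu\mu}(\overline{\tau_\mu(I)})\subseteq\overline{\tau_\nu(I)}$, so $\sigma_{\nu\mu}$ descends to a map $A_\mu/\,\overline{\tau_\mu(I)}\to A_\nu/\,\overline{\tau_\nu(I)}$. The density hypothesis enters already here: $\tau_\nu(I)$ is an ideal in the dense subalgebra $\tau_\nu(A)$, so its closure absorbs multiplication by $\overline{\tau_\nu(A)}=A_\nu$ and is a genuine closed two-sided ideal. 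Hence each $A_\nu/\,\overline{\tau_\nu(I)}$ is a Banach algebra and $B$ is an honest projective limit of Banach algebras.

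Next, the quotient maps $q_\nu\colon A_\nu\to A_\nu/\,\overline{\tau_\nu(I)}$ yield compatible homomorphisms $q_\nu\tau_\nu\colon A\to A_\nu/\,\overline{\tau_\nu(I)}$, hence a homomorphism $\phi\colon A\to B$ with $\pi_\nu\phi=q_\nu\tau_\nu$ (where $\pi_\nu$ are the projections of $B$), by the universal property of the projective limit. For $x\in I$ one has $\tau_\nu(x)\in\overline{\tau_\nu(I)}$, so $\phi(x)=0$ and $\phi$ factors as $\bar\phi\colon A/I\to B$. For the density of the range I would use directedness: a neighborhood base at a point $b\in B$ is given by single-index conditions $\|\pi_\nu(\cdot)-\pi_\nu(b)\|<\varepsilon$, and $\pi_\nu\phi=q_\nu\tau_\nu$ has dense image in $A_\nu/\,\overline{\tau_\nu(I)}$ (being $q_\nu$ applied to the dense set $\tau_\nu(A)$), so each such neighborhood meets $\phi(A)$.

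The crux is that $\bar\phi$ is a topological embedding, i.e. the quotient topology on $A/I$ agrees with the one pulled back from $B$. Since the seminorms $p_\nu=\|\tau_\nu(\cdot)\|$ are directed, the quotient topology on $A/I$ is generated by $\bar p_\nu(a+I)=\inf_{x\in I}\|\tau_\nu(a-x)\|=\dist(\tau_\nu(a),\tau_\nu(I))$, while the seminorm pulled back along $\bar\phi$ is $a+I\mapsto\|q_\nu\tau_\nu(a)\|=\dist(\tau_\nu(a),\overline{\tau_\nu(I)})$. These coincide by the elementary identity that the distance to a set equals the distance to its closure; this is precisely where the closure $\overline{\tau_\nu(I)}$ in the target is forced upon us, and it is the heart of the argument. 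I expect the only genuinely delicate points to be (a) justifying that the quotient seminorms generate the quotient topology, which holds because the family $(p_\nu)$ is directed, and (b) the reduction-to-a-single-index step in the density claim. With the seminorms matched, $\bar\phi$ is injective (as $I$ is closed) and a homeomorphism onto its image, and completeness of $B$ gives $(A/I)\sptilde\cong B$.
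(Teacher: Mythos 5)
Your proof is correct. Its first half --- checking that density forces $\overline{\tau_\nu(I)}$ to be a closed two-sided ideal of $A_\nu$ (because $\tau_\nu(I)$ is an ideal in the dense subalgebra $\tau_\nu(A)$ and multiplication is jointly continuous) and that the linking maps descend to the quotients --- is exactly the paper's first paragraph. Where you diverge is in the main step: the paper does not argue directly, but observes that the density hypothesis means the projective limit is an \emph{Arens--Michael decomposition} of $A$ and then cites \cite[Lemma 7.1]{Pi09}, which yields $(A/I)\sptilde\cong\varprojlim_\nu \bigl(A_\nu/\,\overline{\tau_\nu(I)}\,\bigr)$ at once. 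You instead prove this from scratch via the universal property of the completion: you match the quotient seminorms $\dist(\tau_\nu(a),\tau_\nu(I))$ with the norms pulled back along $\bar\phi$, namely $\dist(\tau_\nu(a),\overline{\tau_\nu(I)})$, using the identity $\dist(z,S)=\dist(z,\overline S)$; you use directedness of the index set twice (to see that the quotient seminorms generate the quotient topology, and to reduce density in the limit to density at a single index); and you conclude by completeness of the limit. In effect you have reproved the cited lemma of Pirkovskii in this special case. What each approach buys: the paper's argument is two lines modulo the reference, while yours is self-contained, makes visible exactly where density and directedness enter, and in particular isolates the one genuinely analytic point (distance to a set equals distance to its closure) on which the whole identification rests.
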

\begin{proof}
The density implies that, for every $\nu$, $\overline{\tau_\nu (I)}$  is a closed ideal in $A_\nu$  and so $A_\nu/\,\overline{\tau_\nu (I)}$ is a Banach algebra. When $\nu'\succeq \nu$, the linking homomorphism maps $\overline{\tau_{\nu}' (I)}$  into $\overline{\tau_\nu (I)}$ and thus there is a continuous homomorphism $(A_{\nu'}/\,\overline{\tau_{\nu'} (I)}\,)\to (A_\nu/\,\overline{\tau_\nu (I)}\,)$. It is clear that we obtain  a projective system $(A_\nu/\,\overline{\tau_\nu (I)})$ of Banach algebras.

The density also implies that the projective limit provides an Arens--Michael decomposition of~$A$. Then we can use \cite[Lemma 7.1]{Pi09}, which immediately implies that $(A/I)\sptilde$ is a projective limit of $(A_\nu/\,\overline{\tau_\nu (I)}\,)$.
\end{proof}

\begin{proof}[Proof of Proposition~\ref{PGLqugen}]
Let $A$ be a projective limit of a system $(A_\nu)$ such that $A_\nu\in\mathsf{C}$ for every $\nu$ and $I$ a closed ideal in~$A$. Since $\mathsf{C}$ is stable under passing to closed subalgebras, we can assume that every homomorphism $A\to A_\nu$ in the corresponding cone has dense range.  By Lemma~\ref{proquo}, the completion of $A/I$  is the projective limit of the system $(A_\nu/\,\overline{\tau_\nu (I)}\,)$. Since $\mathsf{C}$ is stable under passing to quotients, all the algebras in this system are in~$\mathsf{C}$. Thus $(A/I)\sptilde$ is locally in $\mathsf{C}$.
\end{proof}

In view of Propositions~\ref{exisenvgen} and~\ref{PGLqugen}, the following result is deduced in exactly  the same way as in \cite[Proposition 6.1]{Pir_stbflat} and so we omit the proof.
\begin{pr}
\label{PGquot}
Let $\mathsf{C}$ be a class of Banach algebras stable  under passing to finite products, closed subalgebras and quotients. Suppose that~$A$ is a topological algebra and~$I$ is an ideal in~$A$. Denote the closure of the image of~$I$ in $\wh{A}^{\,\mathsf{C}}$ by~$J$. Then $J$ is an ideal in $\wh{A}^{\,\mathsf{C}}$ and the homomorphism $A/I\to \wh{A}^{\,\mathsf{C}}/J$ generated by $\iota_A\!:A\to\wh{A}^{\,\mathsf{C}}$ extends to an isomorphism
$$
\wh{A/I}^{\,\mathsf{C}}\cong (\wh{A}^{\,\mathsf{C}}/J)\sptilde
$$
of topological algebras. In particular, if $\wh{A}^{\,\mathsf{C}}$ is a Fr\'echet algebra, then $\wh{A/I}^{\,\mathsf{C}}\cong \wh{A}^{\,\mathsf{C}}/J$.
\end{pr}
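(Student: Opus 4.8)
The plan is to show directly that $(\wh{A}^{\,\mathsf{C}}/J)\sptilde$, equipped with a canonical map from $A/I$, satisfies the universal property characterizing the envelope $\wh{A/I}^{\,\mathsf{C}}$; the asserted isomorphism then follows from uniqueness of adjoints. First I would record the standard fact that the image $\iota_A(A)$ is dense in $\wh{A}^{\,\mathsf{C}}$ (one may, exactly as in the proof of Proposition~\ref{PGLqugen}, replace each Banach algebra in the defining cone by the closure of the image of $A$, which stays in $\mathsf{C}$ by closed-subalgebra stability). Granting this, the fact that $J$ is a two-sided ideal is a short consequence of separate continuity of multiplication: for fixed $a\in A$ the map $y\mapsto\iota_A(a)y$ carries $\iota_A(I)$ into itself, hence carries $J=\overline{\iota_A(I)}$ into $J$; then for fixed $j\in J$ the map $x\mapsto xj$ carries the dense set $\iota_A(A)$ into $J$, hence carries all of $\wh{A}^{\,\mathsf{C}}$ into $J$, and symmetrically on the other side.

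The crucial structural input is that the candidate object lies in the correct category. Since $\wh{A}^{\,\mathsf{C}}$ is locally in $\mathsf{C}$ and $J$ is a closed ideal, Proposition~\ref{PGLqugen} guarantees that $(\wh{A}^{\,\mathsf{C}}/J)\sptilde$ is again a $\mathsf{CL}$ algebra. Composing $\iota_A$ with the quotient map and the completion map yields a continuous homomorphism $A\to(\wh{A}^{\,\mathsf{C}}/J)\sptilde$ that annihilates $I$, since $\iota_A(I)\subseteq J$, hence factors through a continuous homomorphism $\theta\colon A/I\to(\wh{A}^{\,\mathsf{C}}/J)\sptilde$. Note that $\theta$ has dense image, because $\iota_A(A)$ is dense in $\wh{A}^{\,\mathsf{C}}$ and both the quotient and completion maps preserve density.

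To verify the universal property, take any $\mathsf{CL}$ algebra $B$ and a continuous homomorphism $\phi\colon A/I\to B$. The composite $A\to A/I\xrightarrow{\phi}B$ factors, by the defining property of the envelope of $A$ (Definition~\ref{defengen}), through a unique continuous homomorphism $\wh{A}^{\,\mathsf{C}}\to B$. This homomorphism kills $\iota_A(I)$, hence by continuity kills $J$, so it descends to $\wh{A}^{\,\mathsf{C}}/J$ and, as $B$ is complete, extends uniquely to $(\wh{A}^{\,\mathsf{C}}/J)\sptilde$; the resulting map $\psi$ satisfies $\psi\circ\theta=\phi$. Uniqueness of $\psi$ follows from the density of the image of $\theta$. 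Thus $\bigl((\wh{A}^{\,\mathsf{C}}/J)\sptilde,\theta\bigr)$ is an envelope of $A/I$ with respect to $\mathsf{C}$, which gives the desired isomorphism. For the final assertion, when $\wh{A}^{\,\mathsf{C}}$ is a Fr\'echet algebra its quotient by the closed ideal $J$ is already Fr\'echet, hence complete, so the completion is redundant.

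I expect the only genuinely non-formal point to be the verification that $(\wh{A}^{\,\mathsf{C}}/J)\sptilde$ remains locally in $\mathsf{C}$; everything else is the routine adjunction bookkeeping that makes the envelope a reflection onto $\mathsf{CL}$. That non-formal point is precisely what Proposition~\ref{PGLqugen} supplies, which is why the argument runs parallel to \cite[Proposition~6.1]{Pir_stbflat}.
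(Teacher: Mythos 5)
Your proposal is correct and follows essentially the same route as the paper: the paper omits the proof, noting that it is deduced from Propositions~\ref{exisenvgen} and~\ref{PGLqugen} exactly as in Pirkovskii's Proposition~6.1, and that argument is precisely the universal-property verification you carry out, with Proposition~\ref{PGLqugen} supplying the key non-formal point that $(\wh{A}^{\,\mathsf{C}}/J)\sptilde$ is again in $\mathsf{CL}$. One minor caution: your parenthetical justification of the density of $\iota_A(A)$ (replacing the cone algebras by closures of the image of $A$) does not by itself prove density of the image in the projective limit; density is nevertheless a standard fact here, since the envelope is constructed as the completion of $A$ in the topology given by the $\mathsf{C}$-admissible submultiplicative seminorms.
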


\begin{rem}\label{Dede}
All the classes of Banach algebras listed in the introduction are stable under passing to finite products and closed subalgebras. They are also stable under passing to quotients except for Dedekind-finite Banach algebras. So we cannot apply Proposition~\ref{PGquot} to algebras in the last class.

Indeed, recall that a ring is said to be \emph{Dedekind-finite}, if  $ab=1$ implies $ba=1$. In the context of $C^*$-algebras, a Dedekind-finite algebra is simply called  'finite'; see, e.g., \cite[Exercise~5.1 and Lemma 5.1.2]{RLL}. It immediately follows from the definitions that the class of Dedekind-finite Banach algebras is stable under passing to finite products and closed subalgebras and so the corresponding envelope exists by Proposition~\ref{exisenvgen}.

On the other hand,  to show that the class of Dedekind-finite Banach algebras is not stable under passing to quotients it suffices to show that the class of finite $C^*$-algebras is not stable. In the case of the ground field $\CC$, this fact is well known and furthermore there are many $C^*$-algebras that are  quotients of finite  but not  themselves finite. For example, every nuclearly  imbeddable unital separable $C^*$-algebra  $A$ is a quotient of a $C^*$-subalgebra of the CAR-algebra \cite[Corollary 1.4, (ii)$\Leftrightarrow$ (v)]{Ki95}; see also \cite[Theorem~2]{Wa94}. Since the CAR-algebra is an AF-algebra, it is finite \cite[Theorem IV.2.3]{Da96}. It follows from the equivalent definition of finite $C^*$-subalgebra in terms of projections \cite[Definition 5.1.1]{RLL} that each of its $C^*$-subalgebras is finite. But can additionally suppose that $A$ is infinite and thus the quotient property does not hold.
\end{rem}

\section{$\mathsf{PGL}$ and $\mathsf{DTS}$ algebras}
\label{s:PGL}

In this section we consider the case when $\mathsf{C}=\mathsf{PG}$ and prove some preliminary results on $\mathsf{PGL}$ algebras.

\subsection*{Banach algebra  of  polynomial growth}

The following definition is from \cite{ArOld}.

\begin{df}
\label{defPG}
(A)~An element $b$ of a Banach algebra~$B$ is said to be  of \emph{polynomial growth} if there are $K>0$ and $\alpha\ge0$ such that
\begin{equation*}
\|e^{isb}\|\le K (1+|s|)^{\alpha} \quad \text{for all }s\in \mathbb{R}.
\end{equation*}
(In the case of ground field $\R$ we extends the norm to the complexification of~$B$.)

(B)~A real Banach algebra is said to be of  \textit{polynomial growth} if all its elements are of polynomial growth.
\end{df}

For $p\in\N$ denote by $\rT_p$ the algebra of upper triangular real matrices of order~$p$. When considering a finite-dimensional associative algebra (e.g., $\rT_p$), we always assume that some submultiplicative norm is fixed and so we get a Banach algebra.

\begin{pr}\label{PGfd}
A finite-dimensional Banach algebra is of polynomial growth if and only if it is isomorphic to a subalgebra
of $\rT_p$ for some $p\in\N$.
\end{pr}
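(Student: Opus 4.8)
The plan is to prove the two implications separately, using the standard fact that an embedding into $\rT_p$ is the same as the existence of a faithful triangularizable finite-dimensional representation.

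For the ``if'' direction, suppose $A$ is a subalgebra of $\rT_p$. I would observe that every $b\in A$, viewed as a real upper triangular matrix, has all its eigenvalues real (they are the diagonal entries), so that $isb$ has purely imaginary spectrum. Using the closed form of a matrix exponential --- the entries of $e^{isb}$ are finite sums of terms of the shape $(is)^k e^{is\lambda}/k!$ with $\lambda$ a real eigenvalue and $0\le k\le p-1$ --- together with $|e^{is\lambda}|=1$ for real $\lambda$, one obtains a bound $\|e^{isb}\|\le K(1+|s|)^{p-1}$ for all $s\in\R$. Since all submultiplicative norms on a finite-dimensional algebra are equivalent, the growth condition does not depend on the chosen norm, so $A$ is of polynomial growth. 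This direction is routine.

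For the ``only if'' direction, let $A$ be finite-dimensional of polynomial growth and write $\fr:=\Rad A$. By \cite[Theorem 2.8 and Proposition 2.9]{ArOld}, $\fr$ is nilpotent and $A/\fr$ is commutative; being a finite-dimensional commutative semisimple real algebra, $A/\fr$ is a finite product of finite field extensions of $\R$, that is, of copies of $\R$ and $\CC$. The key point is that no $\CC$ factor can occur. Indeed, polynomial growth descends to quotients: for a quotient map $\pi$ one has $\pi(e^{isb})=e^{is\pi(b)}$ and $\|\pi(\cdot)\|\le\|\cdot\|$ in the quotient norm, whence $\|e^{is\pi(b)}\|\le\|e^{isb}\|$, so every quotient of $A$ is again of polynomial growth. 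But $\CC$, regarded as a real Banach algebra, is \emph{not} of polynomial growth: in its complexification the generator $i$ has non-real spectrum $\{\pm\sqrt{-1}\}$, so one coordinate of $e^{is\cdot i}$ equals $e^{s}$ and grows exponentially (cf.\ Definition~\ref{defPG}(A)). Hence a $\CC$ factor would exhibit $\CC$ as a quotient of $A$, a contradiction, and therefore $A/\fr\cong\R^m$ for some $m$.

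With this structure established, I would finish by triangularizing the left regular representation $\rho\colon A\to\End_\R(A)$, which is faithful because $\rho(a)1=a$ for the unit $1$. Since $\fr$ annihilates every simple $A$-module, each such module is a simple module over $A/\fr\cong\R^m$ and hence one-dimensional over $\R$. Consequently any finite-dimensional $A$-module, in particular $A$ itself, has a composition series all of whose factors are one-dimensional, i.e.\ a complete flag of submodules; choosing a basis adapted to this flag realizes $\rho(A)$ inside $\rT_p$ with $p=\dim A$, which gives the required embedding. The main obstacle is the ``only if'' direction, and within it the exclusion of the $\CC$ factors: this is the only place where polynomial growth is genuinely used beyond the purely algebraic structure theorem, and it is precisely what forces the semisimple quotient to be split over $\R$ and thus makes the triangularization possible.
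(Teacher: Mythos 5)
Your proof is correct, but it takes a genuinely different route from the paper's, most visibly in the necessity direction. The paper argues Lie-theoretically: it equips $A$ with the commutator bracket, invokes \cite[Proposition~4.1]{ArOld} to conclude that $\mathrm{Lie}(A)$ is a triangular Lie algebra, and then applies a real generalization of Lie's theorem \cite{VGO90} to the faithful regular representation to produce the complete flag. You instead stay entirely inside associative structure theory: commutativity modulo the radical and nilpotency of the radical (\cite[Theorem~2.8 and Proposition~2.9]{ArOld}), the Wedderburn description of $A/\Rad A$ as a finite product of copies of $\R$ and $\CC$, exclusion of the $\CC$ factors because polynomial growth passes to quotients while the element $i\in\CC$ has $\|e^{isi}\|$ growing exponentially in the complexification $\CC\oplus\CC$, and finally the observation that simple $A$-modules are then one-dimensional, so the regular representation admits a composition series which is a complete flag. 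This replaces the Lie-theoretic input by an elementary argument about simple modules --- a genuine simplification, since triangularizing an associative algebra whose semisimple quotient splits as $\R^m$ is much easier than the Lie-algebra analogue, and it isolates exactly where polynomial growth enters (forcing the semisimple quotient to have no $\CC$ factor, via stability under quotients). For sufficiency the paper cites the split nilpotent extension theorem \cite[Theorem~2.14]{ArOld}, whereas you estimate $\|e^{isb}\|$ directly: via the Jordan form over $\CC$ the entries of $e^{isb}$ are fixed linear combinations of terms $(is)^k e^{is\lambda}/k!$ with $\lambda$ real and $k\le p-1$, giving the bound $K(1+|s|)^{p-1}$; both are fine. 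In sum, your version is more self-contained and elementary, while the paper's is shorter given the machinery already developed in \cite{ArOld} and fits the triangular-Lie-algebra framework running through that work; both ultimately construct a complete flag for the faithful regular representation.
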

\begin{proof}
To prove the necessity suppose that $A$ is a finite-dimensional Banach algebra of polynomial growth.
Denote $A$ with the Lie algebra structure given by $[a,b]\!:=ab-ba$ by $\mathrm{Lie}(A)$.
Since $A$ is of polynomial growth and  finite dimensional, $\mathrm{Lie}(A)$ is triangular by \cite[Proposition~4.1]{ArOld}. It follows from a generalization of Lie's theorem \cite[\S\,1.2, p.\,10, Theorem 1.2]{VGO90} that every finite-dimensional real representation of $\mathrm{Lie}(A)$ is  triangular, i.e.,  there is a complete flag of invariant subspaces. Taking a faithful representation of~$A$ (e.g., the regular one), considering it as representation of $\mathrm{Lie}(A)$ and choosing a basis compatible with a complete flag, we have that $A$ is isomorphic to a subalgebra of $\rT_p$, where  $p$ is the dimension of the representation.

On the other hand, it is not hard to see that $\rT_p$ is of polynomial growth for every $p$ (for example, one can apply \cite[Theorem~2.14]{ArOld}) and so are all its subalgebras.
\end{proof}

Recall that a \emph{quiver} is a directed graph, possibly with  loops and multiple
arrows. Denote by $\R Q$ the path algebra (over $\R$) of a quiver $Q$; for details see, e.g. \cite[\S\,4.1]{Be91}. It is easy to show that the path algebra of a quiver with finitely many vertices and arrows is finite dimensional. The following proposition is used in the proof of  Theorem~\ref{multCiffk}.

\begin{pr}\label{patalnc}
Let $Q$ be a quiver with finitely many vertices and arrows. If $Q$
has no oriented cycle, then $\R Q$ is of polynomial growth.
\end{pr}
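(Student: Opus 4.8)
The plan is to reduce to Proposition~\ref{PGfd}: I will show that $\R Q$ is finite-dimensional and embeds as a subalgebra of $\rT_p$ for a suitable $p$, and then invoke that proposition.

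First I would verify finite-dimensionality together with a bound on path lengths. Since $Q$ has no oriented cycle, no path can visit a vertex twice (a repeated vertex would isolate an oriented cycle), so every path is an injective sequence of vertices; in particular each path has length at most $n-1$, where $n$ is the number of vertices, and there are only finitely many paths. As the paths form a linear basis of $\R Q$, the algebra is finite-dimensional, with the span of positive-length paths being a nilpotent ideal (it vanishes in degrees exceeding $n-1$).

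Next I would realize $\R Q$ inside $\rT_p$ via the left regular representation $L\colon \R Q\to\End(\R Q)$, $L_a(x)=ax$, which is an injective algebra homomorphism because $\R Q$ is unital. I fix the basis of $\R Q$ consisting of all paths, ordered so that paths of greater length precede paths of smaller length (ties broken arbitrarily), and set $p=\dim \R Q$, so that $L$ takes values in $\M_p(\R)$. It suffices to check that the algebra generators of $\R Q$ act by upper triangular matrices, since the upper triangular matrices form a subalgebra of $\M_p(\R)$ and $L$ is multiplicative. The trivial paths $e_1,\dots,e_n$ at the vertices act diagonally, each $L_{e_k}$ being the projection onto the span of the paths it fixes. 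An arrow $\alpha$ sends a path $\gamma$ either to $0$ or to the concatenated path $\alpha\gamma$, whose length is one greater than that of $\gamma$; with our ordering this means $L_\alpha$ carries each basis vector either to $0$ or to a basis vector of strictly smaller index, so $L_\alpha$ is strictly upper triangular. Hence every $L_a$ is upper triangular, and $L$ embeds $\R Q$ as a subalgebra of $\rT_p$. Applying Proposition~\ref{PGfd} then gives that $\R Q$ is of polynomial growth.

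The only point requiring care is the direction of the triangularity: one must order the path basis by \emph{decreasing} length, so that left multiplication by an arrow, which \emph{increases} length, becomes strictly upper triangular rather than strictly lower triangular. Everything else is formal, and the acyclicity hypothesis is used solely to guarantee that the basis of paths is finite (no path revisits a vertex).
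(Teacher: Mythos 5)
Your proof is correct, but it takes a genuinely different route from the paper. The paper argues structurally: it writes $\R Q = A + I$, where $A$ is the span of the vertex idempotents (a commutative, hence polynomial-growth, subalgebra) and $I$ is the ideal generated by the arrows, observes that acyclicity makes $I$ nilpotent, and then invokes the stability of polynomial growth under split nilpotent extensions (Theorem~2.14 of \cite{ArOld}) applied to $0\leftarrow A \leftarrow \R Q\leftarrow I\leftarrow 0$. You instead produce an explicit embedding $\R Q\hookrightarrow \rT_p$ via the left regular representation, using the length grading of the path basis (ordered by decreasing length, so that left multiplication by an arrow is strictly upper triangular), and then quote the easy direction of Proposition~\ref{patalnc}'s companion, Proposition~\ref{PGfd}. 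Your argument is concrete and self-contained relative to this paper: all the checks (injectivity of $L$ from unitality, triangularity of the generators, sufficiency of checking generators) are sound, and the ordering subtlety you flag is exactly the right one. What the paper's route buys is uniformity of technique --- the same split-nilpotent-extension lemma drives Theorem~\ref{polfrfd} and Proposition~\ref{tenprPG} as well --- and it avoids any choice of basis or matrix realization; note also that the cited easy direction of Proposition~\ref{PGfd} is itself proved in the paper by that very lemma, so the two proofs ultimately rest on the same external result from \cite{ArOld}, with yours inserting the triangular embedding as an intermediate step.
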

\begin{proof}
Denote by $A$ the linear span of all idempotents in $\R Q$ corresponding to the paths of length~$0$ (i.e., the vertices) and   by $I$ the ideal generated by all elements in $\R Q$ corresponding to the paths of length~$1$ (i.e., the wedges). It is easy to see that $A$ is a subalgebra and $I$  is nilpotent (because there are no oriented cycles). Note that $\R Q=A+I$ and so we obtain a split nilpotent extension $0\leftarrow A \leftarrow \R Q\leftarrow I\leftarrow 0$. Since $A$ is of polynomial growth, it follows from \cite[Theorem~2.14]{ArOld} that so is~$\R Q$.
\end{proof}

Now we discuss tensor products.
Recall that the complete projective tensor product $A\ptn B$ of Banach algebras $A$ and $B$ is also a Banach algebra with respect to naturally defined multiplication.

\begin{rem}\label{tprFou}
Generally speaking, Banach algebras of polynomial growth are poorly compatible with the projective tensor product.
For example, although $C(\T)$ is of polynomial growth, $C(\T)\ptn C(\T)$ is not. Indeed, by Varopoulos's theorem, the projective tensor product $C(\T)_\CC\ptn C(\T)_\CC$ of the complexifications contains the algebra~$A(\T)$  of absolutely
convergent Fourier series as a closed subalgebra; see \cite[Chapter VIII, \S\,3]{Ka70}. Denote by $A(\T)_\R$ the subalgebra of real-valued functions in $A(\T)$. Then we can treat it as  a closed subalgebra of $C(\T)\ptn C(\T)$.

If $C(\T)\ptn C(\T)$ had polynomial growth, so would $A(\T)_\R$. But this contradicts a theorem  of Katznelson \cite[Chapter VI, \S\,6]{Ka70}. For details see \cite[Remark 2.7]{ArOld}.
\end{rem}

The following result is a strengthening of Theorem 2.12 in~\cite{ArOld}.

\begin{thm}\label{polfrfd}
Let $A$ and $B$ be  Banach algebras of polynomial growth. If one of them is finite dimensional, then
$A\ptn B$ is of polynomial growth.
\end{thm}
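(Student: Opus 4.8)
The plan is to use the structure theorem for finite-dimensional algebras of polynomial growth (Proposition~\ref{PGfd}) to reduce to an upper-triangular matrix algebra over $A$, and then to estimate the one-parameter group $e^{isx}$ by a Duhamel (time-ordered exponential) expansion that terminates because of nilpotency.

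Since $(-)\ptn(-)$ is symmetric, I would first assume without loss of generality that $B$ is the finite-dimensional factor. By Proposition~\ref{PGfd}, $B$ embeds as a closed subalgebra of $\rT_p$ for some $p$. As $B$ is finite-dimensional, $A\ptn B\cong A\otimes B$ and $A\ptn\rT_p\cong A\otimes\rT_p$, and the induced map $A\ptn B\to A\ptn\rT_p$ is an embedding onto a closed subalgebra. Because the defining inequality of polynomial growth passes to closed subalgebras (the norm of $e^{isx}$ is merely restricted), it suffices to prove that $A\ptn\rT_p$ is of polynomial growth. I would then identify $A\ptn\rT_p$ with the algebra $\rT_p(A)$ of upper-triangular $p\times p$ matrices with entries in $A$, the projective tensor norm being equivalent to the entrywise norm since $\rT_p$ is finite-dimensional.

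The core estimate concerns a \emph{real} element $x\in\rT_p(A)$. I would split $x=d+n$, where $d=\diag(a_1,\dots,a_p)$ with $a_j=x_{jj}\in A$ and $n$ is the strictly upper-triangular (hence nilpotent, $n^p=0$) part. Setting $V(s)=e^{-isd}e^{isx}$ and differentiating gives $V'(s)=i\,\tilde n(s)V(s)$ with $\tilde n(s)=e^{-isd}n\,e^{isd}$, so $V$ is the time-ordered exponential of $i\int_0^s\tilde n$. The matrix $\tilde n(s)$ is again strictly upper-triangular, so any product of $p$ of the factors $\tilde n(\cdot)$ vanishes; hence this time-ordered exponential is a \emph{finite} sum of at most $p-1$ iterated integrals. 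Each entry of $\tilde n(s)$ has the form $e^{-isa_j}n_{jk}e^{isa_k}$, and since every $a_j\in A$ is of polynomial growth, $\|\tilde n(s)\|$ is polynomially bounded in $s$; integrating over the simplex, each iterated-integral term is polynomially bounded, and therefore so is $\|V(s)\|$. Combined with $\|e^{isd}\|$, which is comparable to $\max_j\|e^{isa_j}\|$ and hence polynomially bounded, this yields a polynomial bound on $\|e^{isx}\|=\|e^{isd}V(s)\|$. The same computation applies verbatim in the complexification, since the diagonal entries of a real $x$ lie in $A$ and are of polynomial growth by hypothesis.

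The main obstacle is precisely that $d$ and $n$ need not commute, so $e^{isx}$ does not factor as $e^{isd}e^{isn}$ and one cannot simply multiply the two (separately good) growth bounds. The Duhamel expansion is what circumvents this: finite-dimensionality of $B$ guarantees a uniform nilpotency index $p$, which both truncates the time-ordered exponential to a finite sum and keeps the conjugated perturbation $\tilde n(s)$ strictly upper-triangular with polynomially bounded norm. Remark~\ref{tprFou} shows that this finiteness hypothesis cannot be dropped. Alternatively, one may phrase the final step as an application of a split-nilpotent-extension argument in the spirit of \cite[Theorem~2.14]{ArOld} to the extension $0\to I\to\rT_p(A)\to A^p\to0$, where $I$ is the strictly upper-triangular ideal and $A^p$ is of polynomial growth as a finite product; the Duhamel estimate above is what makes such a reduction available for \emph{infinite-dimensional} $A$, which is the new content relative to \cite[Theorem~2.12]{ArOld}.
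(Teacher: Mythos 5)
Your proof is correct, but it takes a genuinely different route from the paper's. The paper (with the roles of $A$ and $B$ swapped relative to your setup) never embeds the finite-dimensional factor into $\rT_p$: it splits that factor intrinsically, using the Wedderburn factor theorem over the perfect field $\R$ to write it as a split extension of its semisimple quotient by the nilpotent ideal $\Rad A$, then identifies the quotient with $\R^k$ via the Wedderburn decomposition, the Frobenius theorem, commutativity modulo the radical and strict reality; tensoring with the other factor produces the split nilpotent extension $0\lla B^k \lla A\ptn B \lla (\Rad A)\ptn B\lla 0$, and the conclusion is delegated to \cite[Theorem~2.14]{ArOld}. You instead use Proposition~\ref{PGfd} to realize the finite-dimensional factor inside $\rT_p$, pass to the closed subalgebra $A\ptn B\subset A\ptn\rT_p\cong\rT_p(A)$ (legitimate, since a finite-dimensional subspace of $\rT_p$ is complemented, so the induced map is a topological embedding with closed range, and polynomial growth is inherited by closed unital subalgebras by \cite[Proposition~2.11(A)]{ArOld}), and then prove polynomial growth of $\rT_p(A)$ by hand via the Duhamel expansion, which terminates after at most $p-1$ steps by strict upper triangularity. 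Both arguments ultimately rest on the same phenomenon --- a split nilpotent extension over a finite power of the infinite-dimensional factor ($B^k$ in the paper, $A^p$ in your alternative phrasing) --- but they reach it differently, and your primary (Duhamel) version does not even need \cite[Theorem~2.14]{ArOld}: the time-ordered exponential estimate is a self-contained proof of exactly the instance of that theorem required here, making the origin of the polynomial bound explicit. What the paper's route buys is independence from Proposition~\ref{PGfd} and from the tensor-embedding bookkeeping, staying entirely within classical structure theory; what yours buys is transparency (the splitting of $\rT_p(A)$ into diagonal plus strictly upper triangular part is obvious, no Wedderburn needed) and an analytic core that is visible rather than cited. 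Two small points to make explicit if you write this up: the embedding of Proposition~\ref{PGfd} is unital (it comes from a faithful unital representation), which you need so that $e^{isx}$ computed in the subalgebra agrees with $e^{isx}$ in $\rT_p(A_\CC)$; and the uniqueness step identifying $e^{isx}$ with $e^{isd}V(s)$ should be justified, e.g.\ by noting that $U(s):=e^{isd}W(s)$ (with $W$ the truncated time-ordered exponential) satisfies $U'=ixU$, $U(0)=1$, whence $\frac{d}{ds}\bigl(e^{-isx}U(s)\bigr)=0$.
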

\begin{proof}
The argument is essentially the same as in the case when $A=\rT_{p}$ and $B$ is commutative; see \cite[Theorem~2.12]{ArOld}. It follows from \cite[Theorem~2.14]{ArOld} that it suffices to show that $A\ptn B$ is a split nilpotent extension of an algebra of polynomial growth.

Denote by $\Rad A$ the Jacobson radical of $A$ and consider the extension
\begin{equation}\label{Asptex}
0\leftarrow A/\Rad A \leftarrow A \leftarrow \Rad A\leftarrow 0.
\end{equation}

Since $\R$ is a perfect field and $A$ is finite dimensional, we can apply the Wedderburn factor theorem \cite[\S\,13, p.\,471--472, 13.18 and 13.19]{Fa73}, which implies that \eqref{Asptex} splits and $A/\Rad A$  is  classically  semisimple. It follows from the Wedderburn decomposition theorem \cite[Theorem~2.2]{Kna} that a classically  semisimple real algebra is isomorphic to a finite product of full matrix algebras over division algebras over $\R$. Moreover, by
the Frobenius  theorem \cite[Theorem~2.50]{Kna}, the only finite-dimensional division $\R$-algebras are $\R$, $\CC$ and quaternions. Thus, since $A/\Rad A$ is commutative by \cite[Theorem~2.8]{ArOld}, it is a finite product of copies of $\R$ and $\CC$.

It follows from \cite[Proposition~2.3]{ArOld} that, being a quotient of an algebra of polynomial growth,  $A/\Rad A$ is also of polynomial growth and hence strictly real (the spectrum of each element contains in $\R$) by \cite[Proposition~2.5]{ArOld}. Therefore $A/\Rad A$ contains no copy of $\CC$ and so is isomorphic to~$\R^k$ for some $k\in\N$.

Further, $\Rad A$ is nilpotent by \cite[Proposition~2.9]{ArOld} (this also follows from another theorem of Wedderburn \cite[\S\,13, p.\,479, 13.28]{Fa73} since $A$ is finite dimensional). Since \eqref{Asptex} splits and is nilpotent, so is the extension
$$
0\leftarrow B^k \leftarrow A\ptn B \leftarrow (\Rad A)\ptn B\leftarrow 0
$$
obtained by tensoring with $B$. Since  $B$ is of polynomial growth, so is $B^k$  \cite[Proposition~2.13]{ArOld}. An application of Theorem~2.14 in \cite{ArOld} mentioned above completes the proof.

 \end{proof}

Let  $A$ be a real Banach algebra, $m$ in $\N$ and $K$ a compact subset of $\R^m$ with dense interior. Denote by $C^n(K,A)$ the Banach algebra of $A$-valued functions on $K$ having continuous derivatives of order~$n$. Put $C^n(K)\!:= C^n(K,\R)$. Since $C^n(K)\ptn A \cong C^n(K,A)$ when $A$ is finite dimensional, and $C^n(K)$ is of polynomial growth \cite[Proposition 2.13]{ArOld}, we immediately obtain the following corollary of Theorem~\ref{polfrfd}.

\begin{co}\label{CnKA}
Suppose that $A$ is a finite-dimensional real associative algebra, $m\in\N$, $n\in\Z_+$ and $K$ is a compact subset of $\R^m$ with dense interior. If $A$ is of polynomial growth, then so is $C^n(K,A)$.
\end{co}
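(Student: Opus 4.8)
The plan is to read off the result directly from Theorem~\ref{polfrfd} together with the standard identification of a vector-valued $C^n$-function space with a projective tensor product. Since $A$ is assumed finite-dimensional, I expect essentially no serious work beyond assembling the cited ingredients.

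First I would record that $C^n(K)$ is itself a Banach algebra of polynomial growth; this is exactly \cite[Proposition 2.13]{ArOld}, applicable because $K$ is a compact subset of $\R^m$ with dense interior. Next, by hypothesis $A$ is a finite-dimensional real associative algebra of polynomial growth, which (after fixing a submultiplicative norm, per the convention adopted before Proposition~\ref{PGfd}) we regard as a finite-dimensional Banach algebra. Thus both $C^n(K)$ and $A$ are Banach algebras of polynomial growth, and one of the two factors, namely $A$, is finite-dimensional. Applying Theorem~\ref{polfrfd} with the finite-dimensional factor $A$ yields that the complete projective tensor product $C^n(K)\ptn A$ is of polynomial growth.

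It then remains to transport this along the isomorphism $C^n(K)\ptn A\cong C^n(K,A)$ noted in the paragraph preceding the statement. I would spell out that this is an isomorphism of Banach algebras, not merely of Banach spaces: because $A$ is finite-dimensional, the algebraic tensor product $C^n(K)\otimes A$ is already complete in the projective topology, and choosing a basis of $A$ identifies it with a finite direct sum of copies of $C^n(K)$, hence with the space $C^n(K,A)$ of $A$-valued $C^n$-functions. The multiplication induced on $C^n(K)\ptn A$ from the factorwise product corresponds under this identification to the pointwise product of $A$-valued functions, so the map is a topological algebra isomorphism. Since the property of being of polynomial growth (Definition~\ref{defPG}) is manifestly invariant under topological algebra isomorphism, $C^n(K,A)$ is of polynomial growth, completing the argument.

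The only point requiring even minor care, which I would expect to be the sole potential obstacle, is the verification that the identification $C^n(K)\ptn A\cong C^n(K,A)$ respects the algebra structure rather than just the linear-topological structure; this is routine given the finite-dimensionality of $A$, and accounts for why the corollary follows \emph{immediately} from Theorem~\ref{polfrfd}.
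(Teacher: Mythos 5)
Your proof is correct and is essentially the paper's own argument: the paper derives the corollary immediately from Theorem~\ref{polfrfd} using exactly the two ingredients you cite, namely that $C^n(K)$ is of polynomial growth by \cite[Proposition 2.13]{ArOld} and that $C^n(K)\ptn A\cong C^n(K,A)$ for finite-dimensional $A$. Your additional verification that this identification is an isomorphism of topological algebras (not just of Banach spaces) is a point the paper leaves implicit, and it is handled correctly.
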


The following result has no use in the rest of this paper but it is of independent interest.

\begin{pr}\label{tenprPG}
Let $\fA$ and $\fA'$ be split nilpotent extensions of Banach algebras $A$ and~$A'$, respectively.

\emph{(A)}~Then $\fA\ptn \fA'$ is a split nilpotent extension of $A\ptn A'$.

\emph{(B)}~If, in addition, $A\ptn A'$ is of polynomial growth,  then  so is $\fA\ptn \fA'$.
\end{pr}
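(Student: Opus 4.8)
The plan is to reduce (B) to (A) by means of \cite[Theorem~2.14]{ArOld}, the same tool already exploited in the proofs of Propositions~\ref{patalnc} and~\ref{polfrfd}: a split nilpotent extension of a Banach algebra of polynomial growth is again of polynomial growth. Thus, granting (A), assertion (B) is immediate, since $\fA\ptn\fA'$ becomes a split nilpotent extension of $A\ptn A'$, which is of polynomial growth by hypothesis. Hence the whole content lies in (A).

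For (A) I would first fix the data of the two split extensions. Let $\pi\colon\fA\to A$ and $\pi'\colon\fA'\to A'$ be the quotient homomorphisms, $N=\ker\pi$ and $N'=\ker\pi'$ the given nilpotent closed ideals, and $\sigma\colon A\to\fA$, $\sigma'\colon A'\to\fA'$ the splitting homomorphisms. The splittings yield topological direct sum decompositions $\fA=\sigma(A)\oplus N$ and $\fA'=\sigma'(A')\oplus N'$, so that $N$ and $N'$ are complemented closed subspaces. The candidate extension is
\[
0\leftarrow A\ptn A'\xleftarrow{\,\pi\ptn\pi'\,}\fA\ptn\fA'\leftarrow J\leftarrow 0,\qquad J:=\ker(\pi\ptn\pi'),
\]
with splitting homomorphism $\sigma\ptn\sigma'$: indeed $(\pi\ptn\pi')\circ(\sigma\ptn\sigma')=(\pi\sigma)\ptn(\pi'\sigma')=\id$, and both maps are algebra homomorphisms because the projective tensor product of homomorphisms is one. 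So it remains only to prove that $J$ is a nilpotent (closed) ideal.

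Here I would introduce the two closed ideals $K_1:=\ker(\pi\ptn\id_{\fA'})$ and $K_2:=\ker(\id_\fA\ptn\pi')$. Since $N$ and $N'$ are complemented, $(-)\ptn(-)$ distributes over the above decompositions, giving $\fA\ptn\fA'\cong(A\ptn A')\oplus(A\ptn N')\oplus(N\ptn A')\oplus(N\ptn N')$ together with $K_1\cong N\ptn\fA'$ and $K_2\cong\fA\ptn N'$; comparing kernels then shows $J=K_1+K_2$ as a set. Each $K_i$ is nilpotent: if $N^p=0$ and $(N')^q=0$, then a product of $p$ elements of $K_1$ is approximated by products of finite sums of elementary tensors $n\otimes x'$ with $n\in N$, each such approximating product being a finite sum of terms $(n_1\cdots n_p)\otimes(x_1'\cdots x_p')$ with $n_1\cdots n_p\in N^p=0$; by continuity of multiplication $K_1^p=0$, and likewise $K_2^q=0$. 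Finally, the sum of two nilpotent ideals is nilpotent: expanding a product of $p+q-1$ elements of $J=K_1+K_2$ into words in $K_1$ and $K_2$, each word has at least $p$ letters from $K_1$ or at least $q$ from $K_2$, and the two-sided ideal property lets one absorb the intervening factors to land in $K_1^p=0$ or $K_2^q=0$; hence $J^{p+q-1}=0$, completing (A).

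The only real obstacle I anticipate is technical rather than conceptual: making sure that all manipulations with $N\ptn\fA'$, $\fA\ptn N'$ and the direct-sum decomposition of $\fA\ptn\fA'$ are legitimate for the \emph{completed} projective tensor product. This is exactly where the splitting hypothesis is used --- it renders $N$ and $N'$ complemented closed subspaces, so that $(-)\ptn(-)$ respects the decompositions and the summands are genuine closed complemented subspaces rather than merely algebraic images, and the nilpotency bounds pass from algebraic to closed powers by continuity of the multiplication. Apart from this bookkeeping, the argument is entirely formal.
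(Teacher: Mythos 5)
Your proof is correct and follows essentially the same route as the paper's: you identify the kernel of $\pi\ptn\pi'$ with $N\ptn\fA'+\fA\ptn N'$ using the complementedness supplied by the splittings, split the resulting extension by $\sigma\ptn\sigma'$, deduce nilpotency of the kernel from that of $N$ and $N'$, and reduce (B) to (A) via \cite[Theorem~2.14]{ArOld}. The only difference is that you spell out details the paper leaves implicit (the density/continuity argument for nilpotency of $N\ptn\fA'$ and the explicit bound $J^{p+q-1}=0$), which is a welcome elaboration rather than a different method.
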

\begin{proof}
(A)~Let
\begin{equation}\label{twoext}
0\lla A \lla \fA \lla I\lla 0\quad\text{and}\quad 0\lla A' \lla \fA' \lla I'\lla 0
\end{equation}
be the corresponding split nilpotent extensions. Then $I$ and $I'$ are complemented closed subspaces of  $\fA$ and $\fA'$, respectively. Therefore we can identify $I\ptn \fA'+\fA\ptn I'$ with an ideal of $\fA\ptn \fA'$ complemented as a closed subspace.

It is easy to see that
$$
0\lla A\ptn A' \lla \fA\ptn\fA' \lla I\ptn \fA'+\fA\ptn I'\lla 0,
$$
splits since the extensions in~\eqref{twoext} split. Moreover, $I\ptn \fA'+\fA\ptn I'$ is nilpotent since
$I$ and $I'$ are nilpotent.

By \cite[Theorem~2.14]{ArOld}, Part~(B) follows immediately from Part~(A).
\end{proof}

\subsection*{$\mathsf{PGL}$  algebras}

The following notation is from \cite{ArNew}.

\begin{nota}
We denote by $\mathsf{PG}$ the class of real Banach algebras of polynomial growth and by $\mathsf{PGL}$
the class of algebras that are locally in $\mathsf{PG}$. (Sometimes instead of `locally in $\mathsf{PG}$' we write 'a $\mathsf{PGL}$ algebra'.)
\end{nota}

\begin{pr}\label{PGLqu}
The completion of every quotient algebra of a $\mathsf{PGL}$ is also in~$\mathsf{PGL}$.
\end{pr}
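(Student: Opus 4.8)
The plan is to deduce Proposition~\ref{PGLqu} directly from the general machinery already assembled, by verifying that the class $\mathsf{PG}$ satisfies the hypotheses of Proposition~\ref{PGLqugen}. That proposition asserts exactly the conclusion we want---that if an algebra is locally in $\mathsf{C}$, then so is the completion of each of its quotients over a closed ideal---provided $\mathsf{C}$ is stable under passing to closed subalgebras and to quotients over closed ideals. Since $\mathsf{PGL}$ is by definition the class of algebras locally in $\mathsf{PG}$, the statement of Proposition~\ref{PGLqu} is precisely the $\mathsf{C}=\mathsf{PG}$ instance of Proposition~\ref{PGLqugen}. So the entire content reduces to checking the two stability conditions for $\mathsf{PG}$.

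First I would verify stability under closed subalgebras. If $B$ is a real Banach algebra of polynomial growth and $B_0\subseteq B$ is a closed subalgebra, then every element of $B_0$ is an element of $B$, and the defining inequality $\|e^{isb}\|\le K(1+|s|)^\alpha$ is inherited verbatim (the exponential $e^{isb}$ computed in $B_0$ agrees with that computed in $B$, and the norm is the restricted norm). Hence $B_0\in\mathsf{PG}$. This is immediate from Definition~\ref{defPG} and requires no real work.

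The main point---though still routine---is stability under quotients over closed ideals. Here I would appeal to the results already cited in the paper: it is recorded right after Theorem~\ref{polfrfd} that being of polynomial growth passes to quotients, via \cite[Proposition~2.3]{ArOld}, which is invoked in the proof of Theorem~\ref{polfrfd} precisely to conclude that a quotient of an algebra of polynomial growth is again of polynomial growth. Thus if $B\in\mathsf{PG}$ and $J$ is a closed ideal, then $B/J$ is a Banach algebra of polynomial growth. With both stability properties in hand, $\mathsf{PG}$ meets the hypotheses of Proposition~\ref{PGLqugen}, and that proposition yields the claim.

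I do not expect any genuine obstacle, since both stability conditions for $\mathsf{PG}$ are already implicit in the earlier material (the subalgebra case from the definition, the quotient case from \cite[Proposition~2.3]{ArOld}). The only thing to be careful about is purely formal: the underlying locally convex spaces need not have complete quotients in general, which is exactly why Proposition~\ref{PGLqugen} is phrased in terms of the \emph{completion} of the quotient, and why Lemma~\ref{proquo} intervenes to identify that completion as a projective limit of the Banach algebras $A_\nu/\,\overline{\tau_\nu(I)}$. Since those conditions are handled inside the proof of Proposition~\ref{PGLqugen}, nothing further is needed here beyond the verification of the two stability properties of $\mathsf{PG}$.
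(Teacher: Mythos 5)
Your proof is correct and takes essentially the same route as the paper: the paper's proof of Proposition~\ref{PGLqu} likewise consists of checking that $\mathsf{PG}$ is stable under passing to closed subalgebras (citing \cite[Proposition~2.11(A)]{ArOld}) and under quotients (citing \cite[Proposition~2.3]{ArOld}), and then invoking Proposition~\ref{PGLqugen}. The only difference is cosmetic: you verify the closed-subalgebra stability directly from Definition~\ref{defPG} instead of citing the reference, which is harmless.
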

\begin{proof}
The class $\mathsf{PG}$ is stable under passing to closed subalgebras \cite[Proposition~2.11(A)]{ArOld} and quotients \cite[Proposition~2.3]{ArOld}. So the result follows from Proposition~\ref{PGLqugen}.
\end{proof}

The following corollary is a partial case of Proposition~2.8
 in \cite{ArNew} but we include the proof for completeness.
\begin{co}\label{maniPGL}
Let $M$ be a Hausdorff smooth manifold with countable base. Then $C^\infty(M)$ is a $\mathsf{PGL}$ algebra.
\end{co}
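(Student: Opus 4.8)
The plan is to realize $C^\infty(M)$ as the completion of a quotient of $C^\infty(\R^N)$ for a suitable $N$ and then invoke Proposition~\ref{PGLqu}. The whole argument reduces the general manifold case to the Euclidean case plus one application of the quotient stability already established.

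First I would check that $C^\infty(\R^N)$ itself is a $\mathsf{PGL}$ algebra. With its usual Fr\'echet topology it is the projective limit of the Banach algebras $C^j(\overline{B_j})$, $j\in\N$, where $\overline{B_j}\subset\R^N$ is the closed ball of radius $j$ and the connecting homomorphism $C^{j+1}(\overline{B_{j+1}})\to C^j(\overline{B_j})$ restricts the domain to the smaller ball and forgets the top-order derivatives. Indeed, a function that is $C^j$ on $\overline{B_j}$ for every $j$ is $C^\infty$ on $\R^N=\bigcup_j\Image(\text{int }B_j)$, so this projective limit is precisely $C^\infty(\R^N)$. Each $\overline{B_j}$ has dense interior, so by Corollary~\ref{CnKA} with $A=\R$ (equivalently by \cite[Proposition 2.13]{ArOld}) every $C^j(\overline{B_j})$ is a Banach algebra of polynomial growth. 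Hence $C^\infty(\R^N)$ is a projective limit of $\mathsf{PG}$ algebras and therefore lies in $\mathsf{PGL}$.

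Next I would use Whitney's embedding theorem: since $M$ is a Hausdorff smooth manifold with countable base, it embeds as a closed smooth submanifold of $\R^N$ for some $N$. Restriction of functions gives a continuous algebra homomorphism $\rho\colon C^\infty(\R^N)\to C^\infty(M)$. Using a tubular neighborhood $U$ of $M$ together with the normal projection $\pi\colon U\to M$ and a cutoff function supported in $U$ and equal to $1$ near $M$, every $g\in C^\infty(M)$ extends to some $f\in C^\infty(\R^N)$ with $f|_M=g$, so $\rho$ is surjective. Both $C^\infty(\R^N)$ and $C^\infty(M)$ are Fr\'echet, so by the open mapping theorem $\rho$ is open; consequently $C^\infty(M)$ is topologically isomorphic to the quotient $C^\infty(\R^N)/I$, where $I=\ker\rho=\{f: f|_M=0\}$ is a closed ideal. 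Since $C^\infty(M)$ is a Fr\'echet algebra it is complete, so this quotient coincides with its own completion. Proposition~\ref{PGLqu} then asserts that the completion of a quotient of the $\mathsf{PGL}$ algebra $C^\infty(\R^N)$ is again $\mathsf{PGL}$, whence $C^\infty(M)\in\mathsf{PGL}$.

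The main obstacle is the surjectivity of $\rho$ together with the identification of the quotient topology with the native Fr\'echet topology on $C^\infty(M)$: the former requires the smooth extension of functions off the closed submanifold, which is where the tubular neighborhood and bump function enter, and the latter is where the open mapping theorem for Fr\'echet spaces is essential. Once these two points are in place, the passage to Proposition~\ref{PGLqu} is immediate, and the realization of $C^\infty(\R^N)$ as a projective limit of the $C^j(\overline{B_j})$ is routine.
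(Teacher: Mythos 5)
Your proof is correct and follows essentially the same route as the paper: embed $M$ as a closed submanifold of $\R^N$ by Whitney's theorem, observe that the restriction homomorphism $C^\infty(\R^N)\to C^\infty(M)$ is surjective, apply the open mapping theorem to identify $C^\infty(M)$ with a (complete) quotient, and conclude via Proposition~\ref{PGLqu}. The only difference is that you prove from scratch two ingredients the paper simply cites --- that $C^\infty(\R^N)\in\mathsf{PGL}$ (the paper cites \cite[Theorem 2.12]{ArOld}, you give the explicit projective limit over $C^j(\overline{B_j})$) and the surjectivity of restriction (the paper cites \cite[Proposition 2.4]{NaSa}, you give the tubular neighborhood argument) --- both of which are carried out correctly.
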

\begin{proof}
We can assume that $M$ is a closed submanifold in $\R^m$ for some~$m$.  The corresponding continuous homomorphism $C^\infty(\R^m)\to C^\infty(M)$ is surjective; see, e.g., \cite[\S\,2, p.\,24, Proposition 2.4]{NaSa}. Since both algebras are Fr\'echet spaces, the open mapping theorem implies that  $C^\infty(M)$ is a quotient of $C^\infty(\R^m)$. Since
$C^\infty(\R^m)$ is a $\mathsf{PGL}$ algebra \cite[Theorem 2.12]{ArOld}, then so is $C^\infty(M)$ by Proposition~\ref{PGLqu}.
\end{proof}

This result extends to a larger class of algebras. Namely, following \cite[\S\,2.4, p.\,30]{NaSa},
we say that a \emph {$C^\infty$-differentiable algebra} is a real Fr\'echet--Arens--Michael algebra that is topologically isomorphic to an algebra of the form $C^\infty(\R^m )/J$, where $m\in\N$ and $J$~is a closed ideal. (The definition given in~\cite{NaSa} requires only an algebraic isomorphism because the topology on a $C^\infty$-differentiable algebra is independent of its representation as a quotient \cite[p.\,34, Theorem 2.23]{NaSa}.) From Proposition~\ref{PGLqu} we immediately get the following result.

\begin{co}\label{CdiffPGL}
Every $C^\infty$-differentiable algebra is in $\mathsf{PGL}$.
\end{co}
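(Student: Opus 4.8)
The plan is to recognize that this corollary is essentially a direct application of the machinery already assembled in the excerpt. A $C^\infty$-differentiable algebra is by definition (following \cite{NaSa}) a real Fr\'echet--Arens--Michael algebra topologically isomorphic to $C^\infty(\R^m)/J$ for some $m\in\N$ and some closed ideal $J$. So the task reduces to showing that any such quotient lies in $\mathsf{PGL}$. First I would recall that $C^\infty(\R^m)$ is itself a $\mathsf{PGL}$ algebra, which is exactly the content cited as \cite[Theorem 2.12]{ArOld} (and used already in the proof of Corollary~\ref{maniPGL}).

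Given that, the corollary follows immediately from Proposition~\ref{PGLqu}, which asserts that the completion of every quotient of a $\mathsf{PGL}$ algebra is again in $\mathsf{PGL}$. The only subtlety is that Proposition~\ref{PGLqu} produces the \emph{completion} of the quotient, whereas the definition of a $C^\infty$-differentiable algebra bakes in the Fr\'echet (hence complete) structure of $C^\infty(\R^m)/J$. So I would note that since $C^\infty(\R^m)$ is a Fr\'echet space and $J$ is a closed subspace, the quotient $C^\infty(\R^m)/J$ is already complete (quotients of Fr\'echet spaces by closed subspaces are Fr\'echet). Hence the completion coincides with the quotient itself, and no gap opens between what Proposition~\ref{PGLqu} delivers and what we need.

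Putting these together, the proof is just: $C^\infty(\R^m)\in\mathsf{PGL}$, the quotient by the closed ideal $J$ is already complete and therefore equals its own completion, and Proposition~\ref{PGLqu} then places it in $\mathsf{PGL}$. This is why the excerpt can legitimately say ``we immediately get the following result'' --- there is no real obstacle here, only the bookkeeping point that completeness of the Fr\'echet quotient lets one invoke Proposition~\ref{PGLqu} without worrying about the completion operation. The one thing I would be careful to state explicitly (rather than leave implicit) is that the topology on the $C^\infty$-differentiable algebra is indeed the quotient topology, which is guaranteed by the independence-of-representation fact \cite[p.\,34, Theorem 2.23]{NaSa} already quoted in the surrounding text.
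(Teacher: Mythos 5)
Your proof is correct and is essentially the paper's own argument: the paper derives the corollary ``immediately'' from Proposition~\ref{PGLqu} applied to $C^\infty(\R^m)\in\mathsf{PGL}$ (via \cite[Theorem 2.12]{ArOld}), exactly as you do. Your additional bookkeeping --- that the Fr\'echet quotient $C^\infty(\R^m)/J$ is already complete, so the completion in Proposition~\ref{PGLqu} is the quotient itself --- is the right observation and is precisely what makes the deduction immediate.
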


The following result is an immediate consequence of Proposition~\ref{PGquot} in the case when $\mathsf{C}=\mathsf{PG}$. It shows that the notion of envelope with respect to  $\mathsf{PG}$ has a natural interpretation in the context of differential geometry and its extension to spaces with singularities.

\begin{pr}\label{fgCidiff}
Let $C$ be a finitely generated commutative real algebra. Then $\wh C^{\,\mathsf{PG}}$ is a $C^\infty$-differentiable algebra.
\end{pr}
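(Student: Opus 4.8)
The plan is to present $C$ as a quotient of a free commutative algebra and then invoke Proposition~\ref{PGquot}. Since $C$ is finitely generated and commutative, a choice of generators gives an isomorphism $C\cong P/I$, where $P=\R[x_1,\dots,x_m]$ is the polynomial algebra on $m$ generators (the free commutative real algebra) and $I\subseteq P$ is an ideal. I regard $P$ and $C$ as topological algebras in the usual way, so that the envelope with respect to $\mathsf{PG}$ is defined for both.

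The essential input is the identification $\wh{P}^{\,\mathsf{PG}}\cong C^\infty(\R^m)$, i.e.\ the $\mathsf{PG}$-envelope of the polynomial algebra is the algebra of smooth functions; this is the commutative instance of the free $C^\infty$-function algebra and is part of the polynomial-growth theory of \cite{ArOld,ArNew}. Its proof rests on the joint $C^\infty$ functional calculus: a continuous homomorphism from $P$ to a $\mathsf{PGL}$ algebra carries the generators to a commuting tuple of elements of polynomial growth, and such a tuple extends uniquely to a continuous homomorphism out of $C^\infty(\R^m)$; conversely $C^\infty(\R^m)$ is itself a $\mathsf{PGL}$ algebra by \cite[Theorem~2.12]{ArOld}, so it carries the universal property defining the envelope.

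With this in hand, I would check the hypotheses of Proposition~\ref{PGquot} for $\mathsf{C}=\mathsf{PG}$: the class $\mathsf{PG}$ is stable under finite products \cite[Proposition~2.13]{ArOld}, closed subalgebras \cite[Proposition~2.11(A)]{ArOld}, and quotients \cite[Proposition~2.3]{ArOld}. Applying the proposition with $A=P$ and the ideal $I$, and letting $J$ denote the closure of the image of $I$ in $\wh{P}^{\,\mathsf{PG}}=C^\infty(\R^m)$, we obtain that $J$ is a closed ideal. Because $C^\infty(\R^m)$ is a Fr\'echet algebra, the final (``in particular'') assertion of Proposition~\ref{PGquot} gives a topological isomorphism $\wh{C}^{\,\mathsf{PG}}=\wh{P/I}^{\,\mathsf{PG}}\cong C^\infty(\R^m)/J$.

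It then remains only to observe that $C^\infty(\R^m)/J$ with $J$ a closed ideal is, by the very definition recalled before the statement, a $C^\infty$-differentiable algebra, which completes the argument. Everything after the identification $\wh{P}^{\,\mathsf{PG}}\cong C^\infty(\R^m)$ is formal; I therefore expect that identification (and hence the functional-calculus machinery underlying it) to be the only genuine obstacle, and indeed it is the one step that does not follow purely from Proposition~\ref{PGquot}.
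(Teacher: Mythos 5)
Your proof is correct and follows exactly the route the paper intends: the paper states the result as an immediate consequence of Proposition~\ref{PGquot} with $\mathsf{C}=\mathsf{PG}$, which presupposes precisely your steps — writing $C$ as $\R[x_1,\dots,x_m]/I$, invoking $\wh{\R[x_1,\dots,x_m]}^{\,\mathsf{PG}}\cong C^\infty(\R^m)$ (the paper cites \cite[Proposition~3.3]{ArNew} for this in the remark that follows), and using the Fr\'echet case of Proposition~\ref{PGquot} together with the definition of a $C^\infty$-differentiable algebra. You have simply made explicit the details the paper leaves implicit.
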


\begin{rem}
In view of Corollary~\ref{fgCidiff}, we can treat the envelope with respect to $\mathsf{PG}$ of a general commutative real algebra as the algebra of $C^\infty$-functions (modulo nilpotent elements) on some commutative space. For example, $C^\infty(\R^k)$ is the envelope of $\R[x_1,\ldots,x_k]$ \cite[Proposition~3.3
]{ArNew}. Moreover, following the standard paradigm of non-commutative geometry, we can say that the envelope of a general non-commutative algebra is the algebra of $C^\infty$-functions on a non-commutative space. See an extension of this approach in \S\,\ref{s:finCial}.
\end{rem}

We now discuss projective tensor products of $\mathsf{PGL}$ algebra. Our considerations for the Banach algebra case can be extended to a more general class, which is (possibly strictly) smaller than $\mathsf{PGL}$. Note first that if $A$ and $B$ are Arens--Michael algebras (i.e., projective limits of Banach algebras), then so is $A\ptn B$ with respect to a naturally defined multiplication. Moreover, $A\ptn B$ satisfies a natural universal property; see Remark~\ref{exdeCitp}(B).

The following result is a strengthening  of Theorem~\ref{polfrfd}.

\begin{pr}\label{DfsTpg}
If $A$ is a $\mathsf{PGL}$ algebra and $B$ is a finite-dimensional algebra of polynomial growth, then $A\ptn B$ is also in $\mathsf{PGL}$.
\end{pr}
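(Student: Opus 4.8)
The plan is to reduce to the Banach algebra case handled by Theorem~\ref{polfrfd} by passing to a projective limit. Since $A$ is locally in $\mathsf{PG}$, write $A\cong\varprojlim_\nu A_\nu$, where each $A_\nu$ is a real Banach algebra of polynomial growth, and denote by $\tau_\nu\colon A\to A_\nu$ the associated projective cone of (continuous, unital) homomorphisms. By Theorem~\ref{polfrfd}, for every $\nu$ the Banach algebra $A_\nu\ptn B$ is again of polynomial growth, because $A_\nu\in\mathsf{PG}$ and $B$ is finite-dimensional and of polynomial growth. Thus each $A_\nu\ptn B$ belongs to $\mathsf{PG}$, and the linking homomorphisms of the system $(A_\nu)$ induce, after tensoring with $\id_B$, a projective system $(A_\nu\ptn B)$ of algebras in $\mathsf{PG}$.

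The heart of the argument is the identification
$$
A\ptn B\;\cong\;\varprojlim_\nu\,(A_\nu\ptn B)
$$
as topological algebras. I would obtain this from the fact that the functor $(-)\ptn B$ commutes with projective limits in the present situation. The crucial simplification is that $B$ is finite-dimensional: fixing a linear basis of $B$ of length $d=\dim B$ identifies, for any complete locally convex space $E$, the space $E\ptn B$ with $E^{\,d}$ as a topological vector space, so that $(-)\ptn B$ is, up to this identification, a finite-product functor and therefore automatically preserves projective limits. Applying this to $E=A\cong\varprojlim_\nu A_\nu$ yields a topological-vector-space isomorphism $A\ptn B\cong\varprojlim_\nu(A_\nu\ptn B)$.

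It then remains to check that this isomorphism respects multiplication, which is a formal matter. Each homomorphism $\tau_\nu\ptn\id_B\colon A\ptn B\to A_\nu\ptn B$ is a continuous unital algebra homomorphism by functoriality of $\ptn$, so the family $(\tau_\nu\ptn\id_B)$ is a projective cone of algebra homomorphisms; combined with the topological-vector-space isomorphism just established, this exhibits $A\ptn B$ as the projective limit of $(A_\nu\ptn B)$ in the category of topological algebras. Consequently $A\ptn B$ is a projective limit of algebras in $\mathsf{PG}$, i.e., it is locally in $\mathsf{PG}$, as required.

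I expect the only genuine subtlety to be the commutation of $(-)\ptn B$ with the projective limit. For an arbitrary second factor this need not hold, but the finite-dimensionality of $B$ makes $(-)\ptn B$ an honest finite-product functor on the underlying spaces, which removes the difficulty; the compatibility with the algebra structure is then purely formal and can be read off from the universal property of $\ptn$ recorded in Remark~\ref{exdeCitp}(B).
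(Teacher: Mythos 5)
Your proof is correct, but it runs on a different mechanism than the paper's. The paper never touches the projective-limit presentation: it invokes the characterization from \cite{ArNew} (Proposition 2.6 there) of $\mathsf{PGL}$ algebras as exactly the closed subalgebras of products of algebras in $\mathsf{PG}$, writes $A\subset\prod_i A_i$ with each $A_i\in\mathsf{PG}$, uses the fact that the complete projective tensor product commutes with products of complete locally convex algebras to realize $A\ptn B$ as a closed subalgebra of $\prod_i(A_i\ptn B)$, applies Theorem~\ref{polfrfd} to each factor, and then reads the characterization backwards. You instead keep the presentation $A\cong\varprojlim_\nu A_\nu$ and exploit finite-dimensionality of $B$ to turn $(-)\ptn B$ into the finite-power functor $(-)^{d}$, which preserves all limits; Theorem~\ref{polfrfd} then enters exactly as in the paper. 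The core reduction is identical, and your basis observation is in fact also what silently justifies the paper's step that $A\ptn B$ sits as a \emph{closed} subalgebra of $\bigl(\prod_i A_i\bigr)\ptn B$: tensoring a closed embedding with an arbitrary second factor need not produce a closed embedding, but with $B$ finite-dimensional it does. What your route buys is self-containedness --- no appeal to the subalgebra-of-products characterization or to the commutation-with-products theorem, and a transparent isolation of where finite-dimensionality is used. What the paper's route buys is reusability: the product/closed-subalgebra pattern extends to second factors that are not finite-dimensional, where commutation with products still holds and topological injectivity is instead extracted from nuclearity --- this is precisely the template of the proof of Proposition~\ref{prliCitp}. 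One small point worth making explicit in your write-up: the topological-vector-space isomorphism $A\ptn B\cong\varprojlim_\nu(A_\nu\ptn B)$ you construct via the basis of $B$ agrees, by naturality of that identification in the first variable, with the canonical map induced by the cone $(\tau_\nu\ptn\id_B)$; this is what lets you conclude that the algebra homomorphism is itself a topological isomorphism.
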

\begin{proof}
By \cite[Proposition 2.6]{ArNew}, an Arens--Michael $\R$-algebra is in $\mathsf{PGL}$ if only if it is isomorphic to a closed subalgebra of a product of Banach algebras of polynomial growth. So we can assume that $A\subset \prod_i A_i$, where each of $A_i$ is in $\mathsf{PG}$. Since the complete projective tensor product commutes with products of complete locally convex spaces (see, e.g., \cite[Chapter~II, p.\,127, Theorem~5.19]{X1}) and hence with products of complete locally convex algebras, $A\ptn B$ is isomorphic to a closed subalgebra of  $\prod_i (A_i\ptn B)$. By Theorem~\ref{polfrfd}, each of $A_i\ptn B$ is in $\mathsf{PG}$.  Applying \cite[Proposition 2.6]{ArNew} in the reverse direction, we conclude that $A\ptn B$ is in $\mathsf{PGL}$.
\end{proof}

\subsection*{$\mathsf{DTS}$ and $\mathsf{QDTS}$ algebras}
We introduce two more classes of algebras and show that they are contained in $\mathsf{PGL}$ and stable under tensor productswhich is used in \S\,\ref{s:finCial}.

\begin{df}\label{PSDTde}
We denote by $\mathsf{DTS}$ the class of closed subalgebras of products of algebras of the form $C\ptn \rT_p$, where $C$ is a $C^\infty$-differentiable algebra and $p\in\N$. (Here `D' stands  for `differentiable', `T' for `triangular' and `S' for `subalgebra'.)
\end{df}

By Proposition~\ref{PGfd}, we can take arbitrary finite-dimensional algebras of polynomial growth instead of~$\rT_p$.

Note also that this definition is a bit redundant for our purposes since in fact we only use  closed subalgebras of products of algebras of the form $C^\infty(M)\ptn \rT_p$, where $M$ is a manifold (see \S\,\ref{s:af1} and~Appendix~\ref{sec:efa}). But it seems that the class $\mathsf{DTS}$ is of independent interest.

\begin{pr}\label{PSDTPGL}
Every algebra in $\mathsf{DTS}$ belongs to $\mathsf{PGL}$.
\end{pr}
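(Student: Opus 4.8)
The plan is to reduce the statement to the two closure operations built into the definition of $\mathsf{DTS}$---passing to products and to closed subalgebras---together with the observation that the generating blocks $C\ptn\rT_p$ already lie in $\mathsf{PGL}$. The main tool will be the characterization \cite[Proposition 2.6]{ArNew} already used in the proof of Proposition~\ref{DfsTpg}: an Arens--Michael $\R$-algebra belongs to $\mathsf{PGL}$ if and only if it is isomorphic to a closed subalgebra of a product of Banach algebras of polynomial growth. With this in hand the argument becomes a matter of unwinding the definition of $\mathsf{DTS}$ and checking that neither operation takes us outside this characterization.

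First I would treat a single block $C\ptn\rT_p$. Here $C$ is $C^\infty$-differentiable, hence in $\mathsf{PGL}$ by Corollary~\ref{CdiffPGL}, while $\rT_p$ is finite-dimensional and of polynomial growth (as noted in the proof of Proposition~\ref{PGfd}). Proposition~\ref{DfsTpg} then applies verbatim with $A=C$ and $B=\rT_p$ and gives $C\ptn\rT_p\in\mathsf{PGL}$.

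Now let $A\in\mathsf{DTS}$, so that $A$ is isomorphic to a closed subalgebra of a product $\prod_j(C_j\ptn\rT_{p_j})$. By the previous step each factor $C_j\ptn\rT_{p_j}$ is in $\mathsf{PGL}$, hence by \cite[Proposition 2.6]{ArNew} it is a closed subalgebra of a product $\prod_i B_{j,i}$ with every $B_{j,i}\in\mathsf{PG}$. Since a product of closed subspaces is closed in the product of the ambient spaces, $\prod_j(C_j\ptn\rT_{p_j})$ is itself a closed subalgebra of $\prod_{(j,i)}B_{j,i}$, again a product of $\mathsf{PG}$ algebras. Composing the two closed embeddings, $A$ is a closed subalgebra of $\prod_{(j,i)}B_{j,i}$; being a closed subalgebra of a product of Banach algebras, $A$ is in particular Arens--Michael, so the reverse implication of \cite[Proposition 2.6]{ArNew} yields $A\in\mathsf{PGL}$.

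The argument is essentially formal, and the only points requiring care are the bookkeeping steps: that a double product $\prod_j\prod_i$ collapses to a single product $\prod_{(j,i)}$, that a composition of closed embeddings is again a closed embedding, and that the resulting $A$ is Arens--Michael so that the characterization is applicable. None of these presents a genuine obstacle; the substance of the proposition is carried entirely by the two cited results, Corollary~\ref{CdiffPGL} and Proposition~\ref{DfsTpg}, channelled through \cite[Proposition 2.6]{ArNew}.
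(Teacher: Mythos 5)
Your proof is correct and follows essentially the same route as the paper: both establish that each block $C\ptn\rT_p$ lies in $\mathsf{PGL}$ via Corollary~\ref{CdiffPGL} and Proposition~\ref{DfsTpg}, and then conclude by stability of $\mathsf{PGL}$ under products and closed subalgebras. The only difference is that the paper cites this stability directly as \cite[Theorem~2.7]{ArNew}, whereas you re-derive it from the characterization in \cite[Proposition~2.6]{ArNew} by explicit bookkeeping with nested products; this is a cosmetic variation, not a different argument.
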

\begin{proof}
By Corollary~\ref{CdiffPGL}, every $C^\infty$-differentiable algebra is in $\mathsf{PGL}$. Then by Proposition~\ref{DfsTpg}, every algebra of the form $C\ptn \rT_p$, where $C$ is a $C^\infty$-differentiable algebra, is in $\mathsf{PGL}$. Moreover, a closed subalgebra of a product of algebras that are in $\mathsf{PGL}$ is also in~$\mathsf{PGL}$ \cite[Theorem~2.7%
]{ArNew}. It follows from the definition that  $\mathsf{DTS}\subset\mathsf{PGL}$.
\end{proof}

We now turn to tensor products of $\mathsf{DTS}$ algebras.
The following result is used in \S\,\ref{s:finCial}.

\begin{pr}\label{prliCitp}
If $A_1$ and $A_2$ are in $\mathsf{DTS}$, then so is $A_1\ptn A_2$.
\end{pr}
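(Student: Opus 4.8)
The plan is to reduce the statement for general $\mathsf{DTS}$ algebras to the building-block level by tracking two operations: tensoring with the fixed triangular factors, and passing through closed subalgebras of products. By Definition~\ref{PSDTde}, each $A_i$ is a closed subalgebra of a product $\prod_{j} (C_{ij}\ptn \rT_{p_{ij}})$ with $C_{ij}$ a $C^\infty$-differentiable algebra. The first step is to commute $(-)\ptn(-)$ past the ambient products: since the complete projective tensor product commutes with arbitrary products of complete locally convex spaces (and hence of complete locally convex algebras), as already invoked in the proof of Proposition~\ref{DfsTpg} via \cite[Chapter~II, Theorem~5.19]{X1}, the inclusion $A_1\ptn A_2\hookrightarrow \bigl(\prod_{j} (C_{1j}\ptn \rT_{p_{1j}})\bigr)\ptn\bigl(\prod_{l}(C_{2l}\ptn \rT_{p_{2l}})\bigr)$ realizes $A_1\ptn A_2$ as a closed subalgebra of $\prod_{j,l}\bigl((C_{1j}\ptn \rT_{p_{1j}})\ptn(C_{2l}\ptn \rT_{p_{2l}})\bigr)$. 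One should check that $(-)\ptn(-)$ does indeed send a closed subalgebra inclusion to a closed subalgebra inclusion; this is the standard fact that the projective tensor product of two topological embeddings of Fr\'echet (or more generally suitable locally convex) algebras remains a topological embedding, and membership in $\mathsf{DTS}$ is closed under passing to such subalgebras by definition.

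The second step is the key algebraic rearrangement of a single factor. Using associativity and commutativity of the projective tensor product of Arens--Michael algebras, rewrite
\begin{equation*}
(C_1\ptn \rT_{p_1})\ptn(C_2\ptn \rT_{p_2})\cong (C_1\ptn C_2)\ptn(\rT_{p_1}\ptn \rT_{p_2}).
\end{equation*}
Now $\rT_{p_1}\ptn \rT_{p_2}$ is a finite-dimensional algebra of polynomial growth (it is finite dimensional, and it is of polynomial growth by Theorem~\ref{polfrfd} applied to the two finite-dimensional triangular factors, or directly since a tensor product of triangular matrix algebras is again triangular), hence by Proposition~\ref{PGfd} it embeds as a subalgebra of some $\rT_{q}$. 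So it suffices to treat $(C_1\ptn C_2)\ptn \rT_{q}$, and the whole problem is reduced to showing that $C_1\ptn C_2$ is itself a $C^\infty$-differentiable algebra whenever $C_1$ and $C_2$ are.

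The third step, which I expect to be the main obstacle, is precisely this last reduction: that the projective tensor product of two $C^\infty$-differentiable algebras is again $C^\infty$-differentiable. Writing $C_i\cong C^\infty(\R^{m_i})/J_i$ with $J_i$ closed ideals, the natural candidate is the identification $C^\infty(\R^{m_1})\ptn C^\infty(\R^{m_2})\cong C^\infty(\R^{m_1+m_2})$ (a known $C^\infty$-analogue of the Grothendieck-type tensor identity for nuclear Fr\'echet function algebras), under which $C_1\ptn C_2$ should become $C^\infty(\R^{m_1+m_2})/J$ for a suitable closed ideal $J$ generated by $J_1\ptn C^\infty(\R^{m_2})+C^\infty(\R^{m_1})\ptn J_2$. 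The delicate part is \emph{exactness}: one must verify that tensoring the quotient presentations together yields a quotient with \emph{closed} image of the combined ideal and that the resulting quotient is complete and carries the correct topology, i.e. that no completion pathology intervenes. Because all algebras in sight are nuclear Fr\'echet spaces, $(-)\ptn(-)$ is exact on short exact sequences of Fr\'echet spaces, so the sequence obtained by tensoring the two presentations stays exact and the relevant ideal is automatically closed with complete quotient; this is what makes the identification $C_1\ptn C_2\cong C^\infty(\R^{m_1+m_2})/J$ legitimate and shows $C_1\ptn C_2$ is $C^\infty$-differentiable.

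Assembling the three steps, $A_1\ptn A_2$ is exhibited as a closed subalgebra of a product of algebras of the form $(C^\infty\text{-differentiable})\ptn \rT_q$, which is exactly the defining condition for membership in $\mathsf{DTS}$. Hence $A_1\ptn A_2\in\mathsf{DTS}$, as claimed.
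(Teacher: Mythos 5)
Your overall architecture is the same as the paper's proof (commute $\ptn$ with the ambient products, handle the elementary factors $C\ptn\rT_p$, then pass back to closed subalgebras), but there is one genuine gap, and it sits exactly at the step you wave through as a ``standard fact''. The assertion that $(-)\ptn(-)$ carries topological embeddings (closed subalgebra inclusions) to topological embeddings is \emph{false} for general Banach or Fr\'echet spaces: the projective tensor norm does not respect subspaces (respecting subspaces is what the \emph{injective} tensor product does), and for a closed subspace $E'\subset E$ the natural map $E'\ptn F\to E\ptn F$ can even fail to be injective. So ``suitable locally convex algebras'' must mean something, and what it has to mean here is \emph{nuclear}. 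This is precisely how the paper closes the step: $C^\infty$-differentiable algebras and finite-dimensional algebras are nuclear, nuclearity is stable under arbitrary products and closed subspaces, hence every space in sight is nuclear, and then topological injectivity of $A_1\ptn A_2\to B_1\ptn B_2$ follows from \cite[p.\,322, Theorem A1.6]{EP96}; closedness of the image then comes for free since $A_1\ptn A_2$ is complete. You do invoke nuclearity, but only in your Step 3 and for a different purpose (exactness), so as written Step 1 is unjustified. Note also that the ambient products may be indexed by arbitrary sets, so those algebras are not Fr\'echet at all --- nuclearity is the only hypothesis that survives the passage to products, which is another reason the embedding claim cannot be settled by a ``Fr\'echet'' argument.

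The remaining steps match the paper. Your Step 2 (absorbing $\rT_{p_1}\ptn\rT_{p_2}$, a finite-dimensional algebra of polynomial growth by Theorem~\ref{polfrfd}, into some $\rT_q$ via Proposition~\ref{PGfd}) is exactly the remark following Definition~\ref{PSDTde}. For your Step 3 the paper simply cites \cite[Theorem~6.13]{NaSa} for the fact that $C_1\ptn C_2$ is again $C^\infty$-differentiable; your from-scratch sketch has the right ingredients but overshoots: one does not need exactness of $\ptn$ or any identification of the kernel with the closed ideal generated by $J_1$ and $J_2$. It suffices that the projective tensor product of two quotient maps of Fr\'echet spaces again has dense range and open corestriction (cf. \cite[Proposition~43.9]{Tr67}, used elsewhere in the paper), whence its image is complete, closed and therefore everything; so $C_1\ptn C_2$ is the quotient of $C^\infty(\R^{m_1+m_2})$ by the (automatically closed) kernel, which is all that $C^\infty$-differentiability requires. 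Once Step 1 is repaired by the nuclearity argument, your proof goes through and is essentially the paper's.
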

\begin{proof}
(1)~Consider first the case when $A_i$ has the form $C_i\ptn B_i$, where $C_i$ is a $C^\infty$-differentiable algebra and $B_i$ is a finite-dimensional algebra of polynomial growth ($i=1,2$). By  \cite[Theorem~6.13]{NaSa}, $C_1\ptn C_2$ is a $C^\infty$-differentiable algebra and, by Theorem~\ref{polfrfd},  $B_1\ptn B_2$ is a finite-dimensional algebra of polynomial growth. Thus $A_1\ptn A_2$ also has the same form.

(2)~Suppose now that $A_1$ and $A_2$ are products of algebras of the form considered in Case~(1). Since the complete projective tensor product commutes with products of complete locally convex algebras (cf. the proof of proposition~\ref{DfsTpg}), it follows from Case~(1) that $A_1\ptn A_2$ is also a product of algebras of the above form.

(3)~Finally, consider  the general case.  Let $A_1\subset B_1$ and $A_2\subset B_2$, where $B_1$ and $B_2$ are algebras of the form considered in Case~(2). Note that $C^\infty$-differentiable algebras and finite-dimensional algebras are nuclear spaces. Moreover, the class of nuclear spaces is stable under  products \cite[Chapter~3, \S\,7.4]{Scha} and passing to closed subspaces. Hence all the space under consideration are nuclear. Therefore the natural homomorphism $A_1\ptn A_2\to B_1\ptn B_2$ is topologically injective; see, e.g., \cite[p.\,322, Theorem A1.6]{EP96}. Thus $A_1\ptn A_2$ is a closed subalgebra of an algebra of the form considered in Case~(2), i.e.,  $A_1\ptn A_2$ is in $\mathsf{DTS}$.
\end{proof}

\begin{df}\label{QPSDTde}
We denote by $\mathsf{QDTS}$ the class of completions of quotients of $\mathsf{DTS}$ algebras.
\end{df}

The next result follows immediately from Propositions~\ref{PSDTPGL} and \ref{PGLqu}.

\begin{pr}\label{QSDTPGL}
Every algebra in $\mathsf{QDTS}$ belongs to $\mathsf{PGL}$.
\end{pr}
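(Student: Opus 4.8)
The plan is to chain the two already-established results. By Definition~\ref{QPSDTde}, any algebra in $\mathsf{QDTS}$ is by construction the completion of a quotient $A/I$ of some $\mathsf{DTS}$ algebra $A$, where $I$ is a closed ideal. So I would start from such a presentation.

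First I would invoke Proposition~\ref{PSDTPGL} to conclude that $A\in\mathsf{PGL}$, since every $\mathsf{DTS}$ algebra is locally in $\mathsf{PG}$. Then I would apply Proposition~\ref{PGLqu}, which asserts precisely that the completion of every quotient algebra of a $\mathsf{PGL}$ algebra is again in $\mathsf{PGL}$. Applying this to $A$ and the closed ideal $I$ gives that $(A/I)\sptilde$ is in $\mathsf{PGL}$, which is exactly the claim.

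There is no genuine obstacle here, as both ingredients are proved earlier in the paper; the only point requiring a moment's attention is bookkeeping, namely checking that the hypotheses of Proposition~\ref{PGLqu} match the data produced by Definition~\ref{QPSDTde}. Concretely, one should note that the quotient in the definition of $\mathsf{QDTS}$ is taken over a \emph{closed} ideal and that the completion is applied, so that the output of the $\mathsf{DTS}$-to-$\mathsf{PGL}$ step feeds verbatim into the quotient-completion stability result. Once this match is observed, the proof is immediate.
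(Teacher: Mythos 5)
Your proof is correct and is exactly the paper's argument: the paper derives this proposition as an immediate consequence of Propositions~\ref{PSDTPGL} and~\ref{PGLqu}, which is precisely the two-step chain you describe. The bookkeeping remark about matching Definition~\ref{QPSDTde} to the hypotheses of Proposition~\ref{PGLqu} is a reasonable (if minor) point of care, not a deviation.
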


The following proposition is similar to Proposition~\ref{prliCitp}.

\begin{pr}\label{QprliCitp}
If $A_1$ and $A_2$ are in $\mathsf{QDTS}$, then so is $A_1\ptn A_2$.
\end{pr}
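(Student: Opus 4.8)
The plan is to follow the standard pattern by which the ``completion of a quotient'' operation is transported through the projective tensor product, using Proposition~\ref{prliCitp} as the base case. First I would fix, by Definition~\ref{QPSDTde}, presentations $A_i\cong(D_i/I_i)\sptilde$ with $D_i\in\mathsf{DTS}$ and $I_i$ a closed ideal ($i=1,2$). By Proposition~\ref{prliCitp} the algebra $D:=D_1\ptn D_2$ again lies in $\mathsf{DTS}$, so it suffices to exhibit $A_1\ptn A_2$ as the completion of a quotient of $D$ over a closed ideal. The natural candidate is the ideal generated by the images of $I_1$ and $I_2$, i.e.\ the closure $J=\overline{I_1\ptn D_2+D_1\ptn I_2}$ in $D$; being the closure of an ideal, it is a closed ideal, and the target of the proof is the isomorphism $A_1\ptn A_2\cong(D/J)\sptilde$.

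The engine is the right exactness of the projective tensor product at the level of incomplete tensor products (equipped with the projective topology, written $\otimes$): for a quotient map $q$ (an open surjection) and any locally convex space $G$, the map $q\otimes\id_G$ is again an open surjection whose kernel is the closure of the image of $(\Ker q)\otimes G$. Applying this twice, once in each variable, to the quotient maps $D_i\to D_i/I_i$, I would identify $(D_1/I_1)\otimes(D_2/I_2)$ topologically with $(D_1\otimes D_2)/J_0$, where $J_0$ is the closure of $I_1\otimes D_2+D_1\otimes I_2$ inside $D_1\otimes D_2$. It then remains to complete both sides. On the left, since $D_i/I_i$ is dense in $A_i$ and the two carry compatible projective topologies, the completed product $A_1\ptn A_2$ coincides with the completion of $(D_1/I_1)\otimes(D_2/I_2)$. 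On the right, I would invoke the compatibility of completion with quotients---the same property used through \cite[Lemma~7.1]{Pi09} in the proof of Lemma~\ref{proquo}---to identify the completion of $(D_1\otimes D_2)/J_0$ with $(D/J)\sptilde$, where $J=\overline{J_0}$ is the closure in $D$. Combining the two identifications gives $A_1\ptn A_2\cong(D/J)\sptilde$, and since $D\in\mathsf{DTS}$ and $J$ is a closed ideal, this places $A_1\ptn A_2$ in $\mathsf{QDTS}$.

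The main obstacle is precisely this last identification: completion does not commute with quotients for arbitrary (non-metrizable) locally convex spaces, and the $\mathsf{DTS}$ algebras occurring here are closed subalgebras of \emph{products} and hence typically not Fr\'echet. The key point to exploit is nuclearity, which was established for $\mathsf{DTS}$ algebras in the proof of Proposition~\ref{prliCitp} and is inherited by $D=D_1\ptn D_2$, by its quotients, and by their completions (nuclearity being stable under products, closed subspaces, quotients and completion). Nuclearity guarantees that the natural map $(D_1\otimes D_2)/J_0\to D/\overline{J_0}$ is a topological embedding with dense range, so that the incomplete quotient is topologically a dense subalgebra of its complete counterpart and the completion lemma applies legitimately in this non-metrizable setting. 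Once this is in hand, the argument is a direct analogue of Proposition~\ref{prliCitp} with an extra completion step, and the containment $\mathsf{QDTS}\subset\mathsf{PGL}$ of Proposition~\ref{QSDTPGL} (together with Proposition~\ref{PGLqu}) provides a consistency check on the conclusion.
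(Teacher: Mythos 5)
Your proposal is correct: every step you invoke is true, and the skeleton (present $A_i\cong(D_i/I_i)\sptilde$, apply Proposition~\ref{prliCitp} to $D:=D_1\ptn D_2$, and transport the quotient-plus-completion structure through the tensor product) does prove the statement. It is also, at bottom, the same mathematics as the paper's proof: both arguments rest on Proposition~\ref{prliCitp} together with the theorem of Tr\`eves \cite[Proposition~43.9]{Tr67} that the projective tensor product of topological homomorphisms with dense range is again one. The difference is packaging. The paper first observes that an Arens--Michael algebra is in $\mathsf{QDTS}$ if and only if it contains a dense image of a $\mathsf{DTS}$ algebra under a continuous homomorphism whose corestriction to that image is open; tensoring two such homomorphisms $\phi_i\!:B_i\to A_i$ and citing Tr\`eves then finishes the proof in one stroke, with no mention of kernels, quotient ideals, or completions of quotients. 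Your version unpacks this into the explicit identification $A_1\ptn A_2\cong(D/J)\sptilde$, which costs you the completion lemmas of your last paragraph but yields a more concrete presentation of $A_1\ptn A_2$ as an object of $\mathsf{QDTS}$.

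One correction: nuclearity is a red herring, and the sentence ``nuclearity guarantees that the natural map is a topological embedding with dense range'' is an unsupported (if ultimately harmless) appeal --- nuclearity as such is not the reason. The identification you need holds for \emph{arbitrary} locally convex spaces: if $X$ is dense in its completion $\widetilde X$ and $N\subset X$ is a closed subspace with closure $\overline N$ in $\widetilde X$, then the natural map $X/N\to\widetilde X/\overline N$ is injective (because $\overline N\cap X=N$), has dense range (because $X$ is dense in $\widetilde X$ and the quotient map is continuous and surjective), and is a topological embedding, since for every continuous seminorm $p$ on $X$ with extension $\widetilde p$ to $\widetilde X$ the quotient seminorms agree: the infimum of $\widetilde p(x+m)$ over $m\in\overline N$ is unchanged when $m$ is restricted to the dense subset $N$. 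Hence $(X/N)\sptilde\cong(\widetilde X/\overline N)\sptilde$ with no metrizability or nuclearity hypothesis. What does fail for general locally convex spaces is only the completeness of $\widetilde X/\overline N$ itself, and that is irrelevant here because Definition~\ref{QPSDTde} completes the quotient anyway. With that paragraph replaced by this elementary argument, your proof is complete.
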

\begin{proof}
It is easy to see that an Arens--Michael algebra is in $\mathsf{QDTS}$  if and only if it contains a dense image of a $\mathsf{DTS}$ algebra under a continuous homomorphism whose corestriction to this image is open.
Fix for $i=1,2$ such a homomorphism $\phi_i\!:B_i\to A_i$.

It is well known that the above property of linear maps is preserved by the projective tensor product; see, e.g., \cite[Proposition~43.9 and  Definition 4.1]{Tr67}. In our case this means that the corestriction of $\phi_1\otimes \phi_2$ to the image of $B_1\ptn B_2$  in $A_1\ptn A_2$  is open and the image is dense. Since $B_1\ptn B_2$ is in $\mathsf{DTS}$ by Propositions~\ref{PSDTPGL} and $\phi_1\otimes \phi_2$ is a homomorphism, $A_1\ptn A_2$ is also in $\mathsf{QDTS}$.
\end{proof}

\begin{rem}\label{PSDTvsPGL}
The author does not know whether $\mathsf{DTS}$, or at least $\mathsf{QDTS}$, coincides with $\mathsf{PGL}$ or not. It seems doubtful but it is possible that these classes are not much different from  $\mathsf{PGL}$.
\end{rem}

\subsection*{Commutativity modulo radical}

The next result, which is a strengthening of \cite[Theorem~2.8]{ArOld}, is also used in \S\,\ref{s:finCial}.

\begin{pr}\label{PGLcmR}
Every $\mathsf{PGL}$ algebra is commutative modulo the radical.
\end{pr}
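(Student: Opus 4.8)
The plan is to exploit the projective-limit structure directly, avoiding any attempt to compute $\Rad A$ exactly. Fix a representation $A=\varprojlim_\nu A_\nu$ with $A_\nu\in\mathsf{PG}$ and projective cone $(\tau_\nu\!:A\to A_\nu)$, as provided by the definition of a $\mathsf{PGL}$ algebra. By \cite[Theorem~2.8]{ArOld} each Banach algebra $A_\nu$ of polynomial growth is commutative modulo its Jacobson radical, so for any $a,b\in A$ the commutator $x:=ab-ba$ satisfies
$$
\tau_\nu(x)=\tau_\nu(a)\tau_\nu(b)-\tau_\nu(b)\tau_\nu(a)\in\Rad A_\nu\qquad\text{for every }\nu .
$$
Thus the whole statement reduces to the following implication about the (purely algebraic) Jacobson radical of $A$: if $x\in A$ has $\tau_\nu(x)\in\Rad A_\nu$ for all $\nu$, then $x\in\Rad A$.

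To prove that implication I would combine two standard facts. First, invertibility in a projective limit of Banach algebras is detected componentwise: if $z\in A$ and each $\tau_\nu(z)$ is invertible in $A_\nu$, then the inverses $\tau_\nu(z)^{-1}$ are automatically compatible under the linking homomorphisms (which are unital and hence preserve inverses), so they assemble to an element $w\in\varprojlim_\nu A_\nu=A$ with $zw=wz=1$. Second, the quasi-invertibility description of the Jacobson radical: $x\in\Rad A$ iff $1-yx$ is invertible in $A$ for every $y\in A$. Now given $x$ with $\tau_\nu(x)\in\Rad A_\nu$ for all $\nu$, take any $y\in A$; then $\tau_\nu(1-yx)=1-\tau_\nu(y)\tau_\nu(x)$ is invertible in $A_\nu$ precisely because $\tau_\nu(x)$ lies in the radical of $A_\nu$. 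By the componentwise criterion $1-yx$ is invertible in $A$, and since $y$ was arbitrary we conclude $x\in\Rad A$.

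Applying this to $x=ab-ba$ finishes the argument: $ab-ba\in\Rad A$ for all $a,b\in A$, i.e. $A$ is commutative modulo the radical. I do not expect a serious obstacle here, and that is the point worth stressing: we only need the \emph{easy} inclusion $\{x:\tau_\nu(x)\in\Rad A_\nu\ \forall\nu\}\subseteq\Rad A$, which follows formally from quasi-invertibility and componentwise invertibility. We never need the reverse inclusion (an exact identification of $\Rad A$ with $\varprojlim\Rad A_\nu$), which would be genuinely delicate and is unnecessary for the conclusion. The only care required is to keep the radical interpreted as the algebraic Jacobson radical of the underlying ring, so that the quasi-invertibility characterization applies verbatim without topological hypotheses.
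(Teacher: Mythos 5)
Your proof is correct and follows essentially the same route as the paper: the paper's Lemma~\ref{PrlmodR} likewise reduces the claim to showing $1-c[a,b]$ is invertible by noting $\tau_\nu([a,b])\in\Rad A_\nu$ and then invoking componentwise detection of invertibility in a projective limit (cited there from Helemskii, Proposition V.2.11, where you instead prove it inline by assembling the compatible thread of inverses). The only cosmetic difference is that the paper isolates the projective-limit step as a separate lemma for algebras commutative modulo the radical, while you state the reduction as the inclusion $\{x:\tau_\nu(x)\in\Rad A_\nu\ \forall\nu\}\subseteq\Rad A$.
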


For the proof we need the following lemma.

\begin{lm}\label{PrlmodR}
Suppose that a real or complex Arens--Michael algebra $A$ is a projective limit of Banach algebras commutative modulo the radical. Then $A$ is also commutative modulo the radical.
\end{lm}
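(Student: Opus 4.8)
The plan is to prove the statement directly by showing that every commutator $[a,b]=ab-ba$ with $a,b\in A$ lies in the Jacobson radical $\Rad A$, since this is exactly the assertion that $A/\Rad A$ is commutative. I would not attempt to identify $\Rad A$ explicitly. Instead I would use the purely ring-theoretic characterization of the radical of a unital algebra: an element $x$ belongs to $\Rad A$ as soon as $1-yx$ is invertible in $A$ for every $y\in A$. (Left invertibility already suffices for this criterion, but in the argument below I obtain genuine two-sided inverses, so no care is needed on that point.) This characterization is field-independent, so the real and complex cases are handled uniformly.

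The one ingredient that uses the topological structure is a criterion for invertibility in a projective limit. Realizing $A$ as a projective limit of Banach algebras $A_\nu$ with the canonical continuous unital homomorphisms $\tau_\nu\colon A\to A_\nu$ and linking homomorphisms $\pi_{\nu'\nu}\colon A_{\nu'}\to A_\nu$, I claim that $z\in A$ is invertible if and only if each $\tau_\nu(z)$ is invertible in $A_\nu$. One implication is immediate. For the converse, if $\tau_\nu(z)$ has inverse $w_\nu\in A_\nu$, then applying the unital homomorphism $\pi_{\nu'\nu}$ to the identity $\tau_{\nu'}(z)\,w_{\nu'}=1$ and invoking uniqueness of inverses forces $\pi_{\nu'\nu}(w_{\nu'})=w_\nu$; hence $(w_\nu)$ is a compatible family, i.e. an element $w$ of the projective limit, and $zw=wz=1$. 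Note that this needs only that all the maps in sight are unital, which holds since in this paper all algebras and homomorphisms are unital.

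With this criterion the proof is short. Fix $a,b\in A$ and put $c=[a,b]$. For each $\nu$, since $\tau_\nu$ is a homomorphism, $\tau_\nu(c)=[\tau_\nu(a),\tau_\nu(b)]$, and this lies in $\Rad A_\nu$ because $A_\nu$ is commutative modulo its radical. As $\Rad A_\nu$ is a two-sided ideal, for any $y\in A$ we get $\tau_\nu(yc)=\tau_\nu(y)\,\tau_\nu(c)\in\Rad A_\nu$, so that $\tau_\nu(1-yc)=1-\tau_\nu(yc)\in 1+\Rad A_\nu$ is invertible in $A_\nu$ (since $1+\Rad A_\nu$ consists of units). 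By the invertibility criterion, $1-yc$ is therefore invertible in $A$; as $y\in A$ was arbitrary, the radical characterization yields $c=[a,b]\in\Rad A$. Hence $A/\Rad A$ is commutative.

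I would expect the only delicate point to be the invertibility criterion in the projective limit, together with the observation that the defining maps may be taken unital; everything else is a direct application of standard facts about the Jacobson radical, namely that it is a two-sided ideal and that $1+\Rad A_\nu$ consists of invertible elements. A worthwhile sanity check is that the same computation in fact proves the stronger inclusion $\bigcap_\nu\tau_\nu^{-1}(\Rad A_\nu)\subseteq\Rad A$, of which the statement about commutators is the special case we need.
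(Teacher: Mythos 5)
Your proof is correct and takes essentially the same route as the paper's: both reduce the claim to showing that $1-y[a,b]$ is invertible for all $y\in A$, verify this coordinatewise using that each $[\tau_\nu(a),\tau_\nu(b)]$ lies in $\Rad A_\nu$, and conclude via the criterion that an element of a projective limit is invertible once all its images are. The only difference is that you prove this invertibility criterion directly (correctly, via uniqueness of inverses and compatibility of the family $(w_\nu)$), whereas the paper simply cites it from Helemskii's book.
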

\begin{proof}
Let $a,b\in A$. We need to show that $[a,b]\in\Rad A$, i.e.,  $1-c[a,b]$ is invertible for every $c\in A$.

Let $A$ be the projective  limit of a system $(A_\nu)$ of Banach algebras, each of which is commutative modulo the radical. For every $\nu$ we denote by $\tau_\nu$ the corresponding homomorphism $A\to A_\nu$. Then $\tau_\nu([a,b])=[\tau_\nu(a),\tau_\nu(b)]\in\Rad A_\nu$. Hence $1-x[\tau_\nu(a),\tau_\nu(b)]$ is invertible for every $x\in A_\nu$. In particular, $\tau_\nu(1-c[a,b])$ is invertible for every $c\in A$. However, an element $d \in A$ is invertible when all $\tau_\nu(d)$ are invertible; see e.g., \cite[p.\,280, Proposition V.2.11]{X2}. This completes the proof.
\end{proof}

\begin{proof}[Proof of Proposition~\ref{PGLcmR}]
It follows from \cite[Theorem 2.8]{ArOld} that every Banach algebra of polynomial growth is commutative modulo the radical. Since an algebra in $\mathsf{PGL}$ is, by definition, the projective limit of Banach algebras of polynomial growth, we conclude from Lemma~\ref{PrlmodR} that it is also commutative modulo the radical.
\end{proof}
\begin{rem}
Proposition~13 in \cite{HO11} asserts that a projective limit of strictly real Banach algebras is commutative modulo the radical. Since the spectrum of every element of polynomial growth is contained in $\R$ \cite[Proposition 2.5]{ArOld}, in the real case we can use the cited result instead of Lemma~\ref{PrlmodR}. In fact, the proof in \cite{HO11} is essentially the same as for Lemma~\ref{PrlmodR} but with a few typos.
\end{rem}

\section{Universal constructions}
\label{s:uncon}

In this section, we fix a class $\mathsf{C}$ of (real or complex) Banach algebras stable  under passing to finite products and closed subalgebras.

\subsection*{Tensor and free $\mathsf{CL}$ algebras}

The following notions are introduced in \cite{ArNew}.

\begin{df}\label{Cinften}\cite[Definition~1.6
]{ArNew}
Let $E$ be a complete locally convex space. A \emph{tensor $\mathsf{CL}$ algebra} of $E$ is a $\mathsf{CL}$ algebra $T^{\mathsf{C}}(E)$ equipped with a continuous linear map $\mu\! :E\to T^{\mathsf{C}}(E)$ that is the identity component of the left adjoint to the forgetful functor from $\mathsf{CL}$ to the category of complete locally convex spaces.
\end{df}

In fact, the following definition is a partial case of the previous.

\begin{df}\label{freePGL}\cite[Definition~1.10
]{ArNew}
Let $X$ be a set. A \emph{free $\mathsf{CL}$ algebra} with generating set~$X$ is a $\mathsf{CL}$ algebra  $\cF^{\mathsf{C}}\{X\}$ equipped with a map $\mu\!:X\to \cF^{\mathsf{C}}\{X\}$ that satisfies the following universal property: for any $\mathsf{CL}$ algebra~$B$ and any
map $\psi\!: X \to B$ there is a unique continuous homomorphism
$\widehat\psi\!:\cF^{\mathsf{C}}\{X\}\to B$ such that the diagram
\begin{equation*}
  \xymatrix{
X \ar[r]^{\mu}\ar[rd]_{\psi}&\cF^{\mathsf{C}}\{X\}\ar@{-->}[d]^{\widehat\psi}\\
 &B\\
 }
\end{equation*}
is commutative.
\end{df}

The following three results are proved in \cite{ArNew}.

\begin{pr}\label{exisCiten}
\cite[Propostion~1.8
]{ArNew}
The tensor $\mathsf{CL}$ algebra of a complete locally convex space exists and is unique up to natural isomorphism.
\end{pr}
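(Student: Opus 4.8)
The plan is to realize $T^{\mathsf C}(E)$ as the value of a composite of two left adjoints, after which existence and uniqueness both follow from formal properties of adjunctions. Denote by $U\colon\mathsf{CL}\to\mathsf{CLCS}$ the forgetful functor into the category $\mathsf{CLCS}$ of complete locally convex spaces used in Definition~\ref{Cinften}, and factor it as $U=V\circ F$, where $F\colon\mathsf{CL}\hookrightarrow\mathsf{TA}$ is the inclusion and $V$ forgets the multiplication (its restriction to $\mathsf{CL}$ does land in $\mathsf{CLCS}$, since $\mathsf{CL}$ algebras are complete). A left adjoint of a composite is the composite of the left adjoints taken in the opposite order, so it is enough to produce a left adjoint for $F$ and for $V$ separately.

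For $F$ this is already available: as $\mathsf C$ is stable under finite products and closed subalgebras, Proposition~\ref{exisenvgen} supplies the enveloping functor $L$, which is by definition left adjoint to $F$. For $V$ I would construct the free (complete) topological algebra $\wh T(E)$ on a complete locally convex space $E$. The natural candidate is the locally convex direct sum $\bigoplus_{n\ge0}E^{\ptn n}$ of completed tensor powers (with $E^{\ptn0}$ the ground field), equipped with the concatenation product and completed if needed; the map $\mu\colon E\to\wh T(E)$ is the inclusion of the degree-one summand. A continuous linear map $\psi\colon E\to A$ produces for each $n$ the multilinear assignment $(e_1,\dots,e_n)\mapsto\psi(e_1)\cdots\psi(e_n)$, and these should combine into the unique continuous homomorphism extending $\psi$; uniqueness is forced because $\mu(E)$ generates a dense subalgebra.

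Granting both adjoints, $L\circ\wh T$ is left adjoint to $U$, via the natural bijections
\[
\Hom_{\mathsf{CL}}\bigl((L\circ\wh T)(E),B\bigr)\cong\Hom_{\mathsf{TA}}\bigl(\wh T(E),F(B)\bigr)\cong\Hom_{\mathsf{CLCS}}\bigl(E,U(B)\bigr),
\]
and its identity component $\mu\colon E\to(L\circ\wh T)(E)$ is exactly the datum required in Definition~\ref{Cinften}. Hence $T^{\mathsf C}(E):=(L\circ\wh T)(E)$ exists. Uniqueness up to natural isomorphism is then immediate: the identity component determines the left adjoint, and any two left adjoints of one and the same functor are canonically naturally isomorphic.

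The substantive point, and the one I expect to cause the most trouble, is the construction of $\wh T$, i.e.\ the existence of a left adjoint to $V$. The delicate issue is the continuity bookkeeping: in $\mathsf{TA}$ multiplication is only separately continuous, so the multilinear maps above factor through the inductive rather than the projective tensor power, and one must choose the tensor-power topology and the completion so that concatenation is a genuine (separately continuous) product and the resulting algebra is free against all of $\mathsf{TA}$. Once this is settled the remaining checks are formal, and passing to $\mathsf{CL}$ through the envelope $L$ adds nothing new.
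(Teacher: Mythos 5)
Your overall strategy coincides with the paper's (the proof is in \cite[Proposition~1.8]{ArNew}, and its method is summarized in this paper's proof of Proposition~\ref{exisCisym}): obtain $T^{\mathsf{C}}(E)$ by applying the enveloping functor of Proposition~\ref{exisenvgen} to a free algebra on $E$, and deduce existence and uniqueness from formal properties of adjoints. But there is a genuine gap at the step you yourself flag as the substantive one: you factor through $\mathsf{TA}$, so you need a left adjoint to the forgetful functor on $\mathsf{TA}$, i.e., an algebra free against \emph{all} topological algebras with separately continuous multiplication, and this you never construct. Your candidate $\bigoplus_{n\ge 0}E^{\ptn n}$ does not do the job: projective tensor powers classify \emph{jointly} continuous multilinear maps, so the induced linear map into an algebra $A$ with merely separately continuous multiplication need not be continuous (you note this yourself); and switching to inductive-type tensor powers does not rescue the construction, because ``completed if needed'' is illegitimate there --- a separately continuous multiplication need not extend to the completion, so the completion of an object of $\mathsf{TA}$ need not lie in $\mathsf{TA}$ at all. (There is also a typechecking defect: on all of $\mathsf{TA}$ your $V$ does not take values in $\mathsf{CLCS}$, so the middle Hom-set in your chain of bijections is not well formed as an adjunction for $V$.) As written, the proposal is therefore incomplete exactly at its load-bearing point.

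The repair is to change the intermediate category, and this is what the paper does. Freeness against all of $\mathsf{TA}$ is never needed: every $\mathsf{CL}$ algebra is a projective limit of Banach algebras, hence an Arens--Michael algebra, so it suffices to use the \emph{Arens--Michael tensor algebra} $\wh{T}(E)$, the left adjoint to the forgetful functor from $\mathsf{AM}$ to $\mathsf{CLCS}$. This is a classical construction (the completion of $\bigoplus_{n\ge0}E^{\ptn n}$ with respect to all continuous submultiplicative seminorms; cf.\ the Colojoar\u{a}--Dineen reference in the proof of Proposition~\ref{exisCisym}), and it exists precisely because Arens--Michael algebras have jointly continuous multiplication governed by submultiplicative seminorms, which is what projective tensor powers are adapted to. Since $\mathsf{AM}$ is a full subcategory of $\mathsf{TA}$, the restriction of the enveloping functor $L$ to $\mathsf{AM}$ is left adjoint to the inclusion $\mathsf{CL}\hookrightarrow\mathsf{AM}$ (the same observation the paper uses in the proof of Proposition~\ref{PGLcopr}), and then your bijections
\[
\Hom_{\mathsf{CL}}\bigl(L(\wh{T}(E)),B\bigr)\cong\Hom_{\mathsf{AM}}\bigl(\wh{T}(E),B\bigr)\cong\Hom_{\mathsf{CLCS}}\bigl(E,B\bigr)
\]
and your uniqueness-of-adjoints argument go through verbatim. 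So: keep the skeleton, but replace $\mathsf{TA}$ by $\mathsf{AM}$ and your free topological algebra by the Arens--Michael tensor algebra.
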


\begin{pr}\label{exisfree}
\cite[Propostion~1.11
]{ArNew}
The free $\mathsf{CL}$  algebra of a set exists and is unique up to natural isomorphism.
\end{pr}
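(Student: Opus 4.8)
The plan is to deduce existence from Proposition~\ref{exisCiten} by exhibiting the free $\mathsf{CL}$ algebra as the tensor $\mathsf{CL}$ algebra of an appropriate complete locally convex space, and to obtain uniqueness from the standard universal-property argument. For uniqueness, observe that the pair $(\cF^{\mathsf C}\{X\},\mu)$ represents the functor sending a $\mathsf{CL}$ algebra $B$ to the set of all maps $X\to B$; hence any two such pairs are linked by a unique structure-preserving isomorphism, and no computation is needed.

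For existence, the key point is that the underlying-set functor $\mathsf{CL}\to\mathsf{Set}$ factors as the forgetful functor from $\mathsf{CL}$ to the category of complete locally convex spaces followed by the underlying-set functor of that category. Since left adjoints compose, I would build $\cF^{\mathsf C}\{X\}$ by composing two free constructions. First I construct the \emph{free complete locally convex space} $E_X$ on the set $X$: take the free vector space on $X$ over the ground field, equip it with the finest locally convex topology, and let $\mu_0\!:X\to E_X$ send each element to the corresponding basis vector. Any map $\psi\!:X\to G$ into a complete locally convex space $G$ extends uniquely to a linear map $E_X\to G$ by the basis property, and this map is automatically continuous because the topology of $E_X$ is the finest locally convex one; this is precisely the required universal property. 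I then set $\cF^{\mathsf C}\{X\}\!:=T^{\mathsf C}(E_X)$ and $\mu\!:=\mu_1\circ\mu_0$, where $\mu_1\!:E_X\to T^{\mathsf C}(E_X)$ is the structure map furnished by Proposition~\ref{exisCiten}.

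The universal property of $\cF^{\mathsf C}\{X\}$ is then checked by chaining the two universal properties: given a $\mathsf{CL}$ algebra $B$ and a map $\psi\!:X\to B$, one first extends $\psi$ to a continuous linear map $E_X\to B$ via the universal property of $E_X$, and then extends this to a continuous homomorphism $\widehat\psi\!:T^{\mathsf C}(E_X)\to B$ via the universal property of the tensor $\mathsf{CL}$ algebra; the factorisation through $\mu$ and its uniqueness both follow one stage at a time. The only step requiring genuine (if modest) care, which I expect to be the main obstacle, is the verification that $E_X$ is complete, so that it legitimately lies in the domain of $T^{\mathsf C}$; this reduces to the standard fact that a vector space carrying the finest locally convex topology is the locally convex direct sum of copies of the ground field indexed by a Hamel basis and is therefore complete. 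Once this is settled, everything else is formal.
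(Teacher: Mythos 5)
Your proposal is correct and is essentially the approach the paper intends: the paper gives no proof of this statement (it is quoted from \cite[Proposition~1.11]{ArNew}), but its remark immediately preceding Definition~\ref{freePGL} --- that the free $\mathsf{CL}$ algebra is ``a partial case'' of the tensor $\mathsf{CL}$ algebra --- is precisely your identification $\cF^{\mathsf{C}}\{X\}\cong T^{\mathsf{C}}(E_X)$, where $E_X$ is the free complete locally convex space on $X$, so that existence follows from Proposition~\ref{exisCiten} by composing adjunctions. The one point you flag as needing care --- completeness of $E_X$ in its finest locally convex topology, via the identification with the locally convex direct sum $\bigoplus_{x\in X}\R$ --- is indeed the standard fact that closes the argument.
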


\begin{pr}\label{PGLisqu}
\cite[Propostion~1.9
]{ArNew}
Every $\mathsf{CL}$ algebra is a quotient of some tensor $\mathsf{CL}$ algebra.
\end{pr}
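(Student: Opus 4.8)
The plan is to exhibit $A$ itself as the target of a canonical surjection from a tensor $\mathsf{CL}$ algebra built on its own underlying space. Let $A$ be an arbitrary $\mathsf{CL}$ algebra. Forgetting the multiplication, I would regard $A$ as a complete locally convex space $E$ (it is complete, being a projective limit of Banach algebras) and form the tensor $\mathsf{CL}$ algebra $T^{\mathsf{C}}(E)=T^{\mathsf{C}}(A)$, which exists by Proposition~\ref{exisCiten}, together with its canonical continuous linear map $\mu\colon A\to T^{\mathsf{C}}(A)$.

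First I would produce the candidate quotient homomorphism from the universal property. The adjunction of Definition~\ref{Cinften} gives, for every $\mathsf{CL}$ algebra $B$, a bijection between continuous homomorphisms $T^{\mathsf{C}}(A)\to B$ and continuous linear maps $A\to B$. Applying this with $B=A$ and with the continuous linear map $\id_A\colon A\to A$, I obtain a continuous homomorphism $\pi\colon T^{\mathsf{C}}(A)\to A$ satisfying $\pi\circ\mu=\id_A$. Thus $\mu$ is a continuous right inverse (a section) of $\pi$, and surjectivity of $\pi$ is immediate, since every $a\in A$ equals $\pi(\mu(a))$.

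The only point requiring care is to promote this set-theoretic surjectivity to the statement that $A$ is a genuine topological quotient of $T^{\mathsf{C}}(A)$. Here I would invoke the standard fact that a continuous map possessing a continuous section is automatically a topological quotient map: if $U\subseteq A$ is such that $\pi^{-1}(U)$ is open, then $U=\mu^{-1}(\pi^{-1}(U))$ is open by continuity of $\mu$, and the reverse implication is continuity of $\pi$. Consequently the kernel $I:=\Ker\pi$ is a closed ideal, and the induced continuous bijective homomorphism $T^{\mathsf{C}}(A)/I\to A$ is an isomorphism of topological algebras. Therefore $A$ is a quotient of the tensor $\mathsf{CL}$ algebra $T^{\mathsf{C}}(A)$, as required.

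I do not anticipate a serious obstacle: the construction is essentially forced by the universal property, and the section $\mu$ resolves the passage from algebraic surjectivity to a topological quotient cleanly. The one thing to watch is the direction of the adjunction when invoking Definition~\ref{Cinften}, so as to secure $\pi\circ\mu=\id_A$ (and hence a right inverse of $\pi$) rather than the opposite composite.
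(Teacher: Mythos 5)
Your proof is correct and is essentially the argument the paper relies on (the paper defers to \cite[Proposition~1.9]{ArNew}, where the same canonical construction is used): take the counit homomorphism $\pi\colon T^{\mathsf{C}}(A)\to A$ supplied by the adjunction, note $\pi\circ\mu=\id_A$, and use the continuous section $\mu$ to upgrade surjectivity to a topological quotient map. Your care about the direction of the adjunction and the passage from algebraic to topological quotient is exactly the right point to check, and your handling of it is sound.
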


\subsection*{Symmetric $\mathsf{CL}$  algebras}

\begin{df}\label{Cinfsymm}
Let $E$ be a complete locally convex space. A \emph{symmetric $\mathsf{CL}$  algebra} of $E$ is a commutative $\mathsf{CL}$ algebra $S^{\mathsf{C}}(E)$ equipped with a continuous linear map $\mu\!:E\to S^{\mathsf{C}}(E)$ that is the identity component of the left adjoint to the forgetful functor from the category of commutative  $\mathsf{CL}$ algebras to the category of complete locally convex spaces.
\end{df}

\begin{rem}
Thus the symmetric $\mathsf{CL}$  algebra of $E$ satisfies the following universal property:
for any commutative $\mathsf{CL}$ algebra $B$ and any continuous linear map
$\psi\!: E \to B$ there is a unique continuous homomorphism
$\widehat\psi\!:S^{\mathsf{C}}(E)\to B$ such that the diagram
\begin{equation*}
  \xymatrix{
E \ar[r]^{\mu}\ar[rd]_{\psi}&S^{\mathsf{C}}(E)\ar@{-->}[d]^{\widehat\psi}\\
 &B\\
 }
\end{equation*}
is commutative.
\end{rem}

\begin{pr}\label{exisCisym}
The symmetric $\mathsf{CL}$ algebra  of a complete locally convex space exists and is unique up to natural isomorphism.
\end{pr}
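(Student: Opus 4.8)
The plan is to deduce the statement from the already-established existence of tensor $\mathsf{CL}$ algebras (Proposition~\ref{exisCiten}) by applying that result not to $\mathsf{C}$ itself but to the subclass $\mathsf{C}_{\mathrm{comm}}$ of its commutative members. Since a finite product of commutative Banach algebras is commutative, and a closed subalgebra of a commutative algebra is commutative, the stability of $\mathsf{C}$ under finite products and closed subalgebras is inherited by $\mathsf{C}_{\mathrm{comm}}$. Hence $\mathsf{C}_{\mathrm{comm}}$ satisfies the standing hypotheses of this section, so the tensor $(\mathsf{C}_{\mathrm{comm}})\mathsf{L}$ algebra $T^{\mathsf{C}_{\mathrm{comm}}}(E)$ exists and is unique up to natural isomorphism for every complete locally convex space $E$. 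Note that this route involves no quotient construction, so, unlike the naive idea of commutativizing $T^{\mathsf{C}}(E)$, it requires no stability of $\mathsf{C}$ under quotients.

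The crux is to identify the category $(\mathsf{C}_{\mathrm{comm}})\mathsf{L}$ with the category of commutative $\mathsf{CL}$ algebras. One inclusion is immediate: a projective limit of commutative algebras is commutative and, being a projective limit of algebras in $\mathsf{C}$, lies in $\mathsf{CL}$. For the converse, let $A$ be a commutative $\mathsf{CL}$ algebra, so that $A$ is a projective limit of a system $(A_\nu)$ with $A_\nu\in\mathsf{C}$. By the standard reduction to a cone of dense-range homomorphisms (cf.\ the proof of Proposition~\ref{PGLqugen}), we may replace each $A_\nu$ by the closure of the image of the canonical map $A\to A_\nu$ without changing the limit; this closure is a closed subalgebra, hence still in $\mathsf{C}$, and it is commutative because the dense image of the commutative algebra $A$ is commutative. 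Thus $A$ is a projective limit of algebras in $\mathsf{C}_{\mathrm{comm}}$, i.e.\ $A\in(\mathsf{C}_{\mathrm{comm}})\mathsf{L}$. Consequently the forgetful functor from commutative $\mathsf{CL}$ algebras to complete locally convex spaces is exactly the forgetful functor associated with the class $\mathsf{C}_{\mathrm{comm}}$.

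It then remains only to transcribe the tensor-algebra case. I would set $S^{\mathsf{C}}(E):=T^{\mathsf{C}_{\mathrm{comm}}}(E)$, with the same structure map $\mu\!:E\to T^{\mathsf{C}_{\mathrm{comm}}}(E)$. By construction $S^{\mathsf{C}}(E)$ is a $(\mathsf{C}_{\mathrm{comm}})\mathsf{L}$ algebra, hence a \emph{commutative} $\mathsf{CL}$ algebra, and the universal property of a tensor $(\mathsf{C}_{\mathrm{comm}})\mathsf{L}$ algebra (Definition~\ref{Cinften} for the class $\mathsf{C}_{\mathrm{comm}}$) is word-for-word the universal property required of the symmetric $\mathsf{CL}$ algebra in Definition~\ref{Cinfsymm}. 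Uniqueness up to natural isomorphism is then the uniqueness assertion of Proposition~\ref{exisCiten} applied to $\mathsf{C}_{\mathrm{comm}}$, equivalently the standard uniqueness of a left adjoint. The only genuine work is the categorical identification in the second paragraph, and in particular the passage to dense-range cones that lets the approximating Banach algebras be taken commutative; once this is in place, existence and uniqueness follow formally, and I expect no further obstacle.
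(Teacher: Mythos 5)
Your argument is correct, but it follows a genuinely different route from the paper's. The paper proves existence by rerunning the construction of the tensor $\mathsf{CL}$ algebra with a different input: one applies the envelope with respect to $\mathsf{C}$ to the Arens--Michael \emph{symmetric} algebra of $E$ (Colojoar\u{a}'s construction), whose own existence comes from the commutativization functor $A\mapsto (A/\,\overline{[A,A]}\,)\sptilde$, left adjoint to the embedding of commutative Arens--Michael algebras into all Arens--Michael algebras. You instead change the \emph{class} rather than the input object: you invoke Proposition~\ref{exisCiten} for the subclass $\mathsf{C}_{\mathrm{comm}}$ and then identify the category of commutative $\mathsf{CL}$ algebras with $(\mathsf{C}_{\mathrm{comm}})\mathsf{L}$, so that $S^{\mathsf{C}}(E):=T^{\mathsf{C}_{\mathrm{comm}}}(E)$ has exactly the universal property of Definition~\ref{Cinfsymm}. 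The one nontrivial point in your argument --- replacing a projective system by the closures of the images of the cone maps, which are commutative closed subalgebras and hence lie in $\mathsf{C}_{\mathrm{comm}}$, without changing the limit --- is sound; it is the same dense-range device the paper itself uses in the proof of Proposition~\ref{PGLqugen}. What your route buys: it is purely formal, requires neither Colojoar\u{a}'s symmetric algebra nor any quotient/commutativization step, and it isolates a fact of independent interest, namely that commutative $\mathsf{CL}$ algebras are precisely the projective limits of commutative members of $\mathsf{C}$, so the symmetric algebra for $\mathsf{C}$ is the tensor algebra for $\mathsf{C}_{\mathrm{comm}}$. What the paper's route buys: the concrete realization of $S^{\mathsf{C}}(E)$ as the envelope with respect to $\mathsf{C}$ of the classical Arens--Michael symmetric algebra, which runs exactly parallel to the tensor case and makes identifications such as $S^{\,\mathsf{PG}}(E)\cong C^\infty(\R^k)$ for $k$-dimensional $E$ immediate. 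Both arguments ultimately rest on the same elementary fact --- commutativity survives closures and projective limits --- which you make explicit and the paper leaves implicit (in the form that the envelope of a commutative algebra is again commutative).
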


\begin{proof}
The uniqueness follows from the definitions.

The argument for the existence part is similar to that for Proposition~\ref{exisCiten} (see \cite[Propostion~1.8
]{ArNew}) but with applying the envelope in $\mathsf{C}$ to the \emph{Arens--Michael symmetric algebra} of $E$ instead of the Arens--Michael tensor algebra $\wh{T}(E)$. The definition of the Arens--Michael symmetric algebra goes back to A.~Colojoar\u{a}; see \cite[\S\,6.4, p.\,354, Definition 6.54]{Di81}. The existence of $\wh{T}(E)$ for every $E$ follows from the fact that the correspondence $A\mapsto (A/\,\overline{[A,A]}\,)\sptilde$ extends to a functor left adjoint to the embedding functor from the category of commutative Arens--Michael  algebras to the category of all Arens--Michael  algebras. (Here $[A,A]$ stands for the ideal generated by all the commutators in an associative algebra $A$.)
\end{proof}

The proof of the following result is similar to Proposition~\ref{PGLisqu} (see \cite[Propostion~1.19%
]{ArNew}) with $T^{\mathsf{C}}(A)$ replaced by $S^{\mathsf{C}}(A) $.

\begin{pr}\label{cPGLisqu}
Every commutative $\mathsf{CL}$ algebra is a quotient of some symmetric $\mathsf{CL}$ algebra.
\end{pr}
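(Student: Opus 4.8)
The plan is to mirror the proof of Proposition~\ref{PGLisqu} (that is, \cite[Proposition~1.19]{ArNew}), replacing the tensor $\mathsf{CL}$ algebra by the symmetric one. Let $A$ be a commutative $\mathsf{CL}$ algebra. As a projective limit of Banach algebras, $A$ is a complete locally convex space, so its symmetric $\mathsf{CL}$ algebra $S^{\mathsf{C}}(A)$ is defined by Proposition~\ref{exisCisym}. I would then apply the universal property of $S^{\mathsf{C}}(A)$ (see the Remark following Definition~\ref{Cinfsymm}) to the continuous linear map $\id_A\colon A\to A$, whose codomain is the \emph{commutative} $\mathsf{CL}$ algebra $A$; the commutativity of $A$ is exactly what makes this application legitimate. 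This produces a unique continuous homomorphism $\pi\colon S^{\mathsf{C}}(A)\to A$ with $\pi\circ\mu=\id_A$, where $\mu\colon A\to S^{\mathsf{C}}(A)$ is the canonical linear map.

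From the identity $\pi\circ\mu=\id_A$ two facts follow at once. First, $\pi$ is surjective, since every $a\in A$ equals $\pi(\mu(a))$; in particular no density or closed-image argument is required. Second, $\mu$ is a continuous linear section of $\pi$, so the endomorphism $e\colon=\mu\circ\pi$ of $S^{\mathsf{C}}(A)$ is a continuous idempotent ($e^2=\mu(\pi\mu)\pi=\mu\pi=e$) with image $\mu(A)$ and kernel $\ker\pi$. It therefore yields a topological direct sum decomposition $S^{\mathsf{C}}(A)=\ker\pi\oplus\mu(A)$, and since $\pi=(\pi|_{\mu(A)})\circ e$ is the composite of the open projection $e$ with the topological isomorphism $\pi|_{\mu(A)}\colon\mu(A)\to A$, the map $\pi$ is open. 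Hence the induced continuous homomorphism $S^{\mathsf{C}}(A)/\ker\pi\to A$ is a topological isomorphism; as $A$ is already complete, there is no need to pass to a completion, and $A$ is exhibited as a quotient of the symmetric $\mathsf{CL}$ algebra $S^{\mathsf{C}}(A)$ over the closed ideal $\ker\pi$.

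I expect no serious obstacle: the only point requiring a little care is the passage from ``$\pi$ is a continuous surjective homomorphism'' to ``$A$ is genuinely a topological quotient of $S^{\mathsf{C}}(A)$''. In the general $\mathsf{CL}$ setting the underlying spaces need not be Fr\'echet, so one cannot invoke the open mapping theorem; instead the existence of the continuous linear section $\mu$ is what forces $\pi$ to be open, and this is the step I would make explicit rather than leave implicit. Everything else is a verbatim transcription of the tensor-algebra argument with $T^{\mathsf{C}}$ replaced by $S^{\mathsf{C}}$.
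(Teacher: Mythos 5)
Your proof is correct and takes essentially the same route as the paper, whose proof consists precisely of the remark that the argument for Proposition~\ref{PGLisqu} goes through with $T^{\mathsf{C}}(A)$ replaced by $S^{\mathsf{C}}(A)$ --- i.e., apply the universal property to $\id_A$ (legitimate exactly because $A$ is commutative) and use the resulting retraction $\pi$ with continuous section $\mu$. Your only addition is to spell out the openness of $\pi$ via the idempotent $\mu\circ\pi$, a detail the paper leaves inside the citation to the tensor-algebra case.
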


\subsection*{Coproducts, colimits and tensor products}

Now we consider coproducts,  which  are also called free products in categories of associative algebras,  and related constructions.

\begin{pr}\label{PGLcopr}
Coproducts exist in $\mathsf{CL}$.
\end{pr}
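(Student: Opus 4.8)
The plan is to construct the coproduct of a family $(A_i)_{i\in\Lambda}$ of $\mathsf{CL}$ algebras by applying the enveloping functor with respect to $\mathsf{C}$ to an underlying coproduct, exploiting the fact that $L\!:\mathsf{TA}\to\mathsf{CL}$ is a left adjoint. Since left adjoints preserve all colimits, the strategy reduces to two ingredients: first, the existence of coproducts in a larger ambient category where they are easier to build; second, the adjunction established in Definition~\ref{defengen} together with the stability hypotheses on $\mathsf{C}$ that guarantee $L$ exists (Proposition~\ref{exisenvgen}). First I would recall that the category $\mathsf{TA}$ of topological algebras with separately continuous multiplication admits coproducts: the free product (coproduct) of associative algebras carries a natural topology (e.g. the finest locally convex algebra topology making all the structure maps continuous, or the projective-tensor-based free-product topology as in the construction of $\Pfree$), and its completion is a topological algebra enjoying the coproduct universal property in $\mathsf{TA}$.

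The key computation is then formal. Given $(A_i)$ in $\mathsf{CL}$, form their coproduct $\coprod_i^{\mathsf{TA}} A_i$ in $\mathsf{TA}$ and set $P := L\bigl(\coprod_i^{\mathsf{TA}} A_i\bigr)$, equipped with the canonical maps $A_i \to \coprod_i^{\mathsf{TA}} A_i \xrightarrow{\iota} F(P)$. For any $\mathsf{CL}$ algebra $B$, the chain of natural bijections
\begin{equation*}
\Hom_{\mathsf{CL}}(P,B)\;\cong\;\Hom_{\mathsf{TA}}\Bigl(\coprod_i^{\mathsf{TA}} A_i,\,F(B)\Bigr)\;\cong\;\prod_i \Hom_{\mathsf{TA}}\bigl(A_i,F(B)\bigr)\;\cong\;\prod_i \Hom_{\mathsf{CL}}(A_i,B)
\end{equation*}
establishes the universal property, where the first isomorphism is the adjunction $L\dashv F$ and the second is the coproduct property in $\mathsf{TA}$; the last step uses that each $A_i$ already lies in $\mathsf{CL}$, so that $\mathsf{TA}$-homomorphisms out of $A_i$ into $F(B)$ are the same as $\mathsf{CL}$-homomorphisms $A_i\to B$ (the forgetful functor $F$ is fully faithful on the subcategory $\mathsf{CL}$). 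This exhibits $P$ as the coproduct in $\mathsf{CL}$.

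The main obstacle I expect is purely the verification that $\mathsf{TA}$ genuinely has coproducts in the relevant topological sense, i.e. that the algebraic free product of the $A_i$ can be given a topology making it a legitimate object of $\mathsf{TA}$ (separately continuous multiplication) together with continuous structure morphisms, and that this object is \emph{initial} among topological algebras receiving compatible continuous homomorphisms. Once this is in hand, everything else is a routine application of the adjunction, and no special feature of $\mathsf{C}$ beyond the standing hypotheses (stability under finite products and closed subalgebras, which by Proposition~\ref{exisenvgen} yield the enveloping functor $L$) is needed. I would therefore devote the body of the proof to a careful description of the coproduct topology on $\coprod_i^{\mathsf{TA}} A_i$ and then simply invoke that left adjoints preserve colimits to conclude.
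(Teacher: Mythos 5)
Your overall strategy --- form a coproduct in an ambient category and then apply the enveloping functor $L$, using that left adjoints preserve colimits --- is exactly the strategy of the paper, and your chain of natural bijections is the correct formal skeleton (fullness of $\mathsf{CL}$ in $\mathsf{TA}$ is indeed what justifies the last step). The difference is the choice of ambient category, and that is where your proof has a genuine gap: you treat the existence of coproducts in $\mathsf{TA}$ as a deferred verification, but this is precisely the nontrivial point, and the constructions you sketch do not obviously deliver it. The ``finest locally convex algebra topology making the structure maps continuous'' requires an existence argument (one must check that the supremum of all admissible topologies still has separately continuous multiplication, and that it separates points if Hausdorffness is demanded --- recall that objects of $\mathsf{TA}$ are only assumed to have separately continuous multiplication and need not carry submultiplicative seminorms); the projective-tensor-type free-product topology used for $\Pfree$ presupposes jointly continuous multiplication governed by submultiplicative seminorms, i.e.\ it lives in the Arens--Michael world, not in $\mathsf{TA}$; and if you pass to a completion, the resulting universal property holds only against complete targets, so what you obtain is not a coproduct in $\mathsf{TA}$ (although, since every $\mathsf{CL}$ algebra is complete, this particular defect would not hurt your application).

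The paper avoids this entire issue by choosing the ambient category to be $\mathsf{AM}$, the Arens--Michael algebras: coproducts there are already known to exist by Pirkovskii \cite[\S\,4]{Pi15}, every $\mathsf{CL}$ algebra is an Arens--Michael algebra, and since $\mathsf{AM}$ is a full subcategory of $\mathsf{TA}$, the envelope functor of Proposition~\ref{exisenvgen} restricts to a left adjoint of the forgetful functor from $\mathsf{CL}$ to $\mathsf{AM}$. With that one substitution ($\coprod^{\mathsf{AM}}$ in place of $\coprod^{\mathsf{TA}}$) your argument becomes exactly the paper's proof. So either make that substitution, or supply an actual construction of coproducts in $\mathsf{TA}$; the latter is real work that your proposal defers rather than performs, and nothing in your application requires the extra generality, since your bijection chain only ever tests the universal property against objects of $\mathsf{CL}$, all of which lie in $\mathsf{AM}$.
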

\begin{proof}
Let $\{A_i\}_{i\in I}$ be a family of $\mathsf{CL}$ algebras. Note that coproducts exist in the category $\mathsf{AM}$ of Arens--Michael algebras; see \cite[\S\,4]{Pi15}. By Proposition~\ref{exisenvgen}, the forgetful functor from $\mathsf{CL}$ to $\mathsf{TA}$ admits a left adjoint functor and so does  the forgetful functor from $\mathsf{CL}$ to $\mathsf{AM}$ since the latter is a subcategory of $\mathsf{TA}$. Therefore, taking first the coproduct of all $A_i$ in $\mathsf{AM}$ and then its envelope with respect to $\mathsf{CL}$, we obtain a coproduct in $\mathsf{CL}$.
\end{proof}

We denote the coproduct of a family $\{A_i\}_{i\in I}$ of $\mathsf{CL}$ algebras  by ${\ast^{\mathsf{C}}_{i\in I}}A_i$ and the coproduct of a finite family $A_1,\ldots,A_n$ by $A_1\mathbin{\ast^{\mathsf{C}}\!}\cdots\mathbin{\ast^{\mathsf{C}}\!}A_n$.

\begin{pr}\label{copoffr}
Let $X$  and $Y$ be sets. Then $\cF^{\mathsf{C}}\{X\}\mathbin{\ast^{\mathsf{C}}\!}\cF^{\mathsf{C}}\{Y\}\cong \cF^{\mathsf{C}}\{X\coprod Y\}$.
\end{pr}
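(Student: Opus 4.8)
The plan is to prove the isomorphism purely by universal properties, exhibiting $\cF^{\mathsf{C}}\{X\}\mathbin{\ast^{\mathsf{C}}\!}\cF^{\mathsf{C}}\{Y\}$ as a free $\mathsf{CL}$ algebra on the set $X\coprod Y$ and then invoking the uniqueness clause of Proposition~\ref{exisfree}. The conceptual reason behind the statement is that, by Definition~\ref{freePGL}, the assignment $X\mapsto\cF^{\mathsf{C}}\{X\}$ is left adjoint to the forgetful functor from $\mathsf{CL}$ to $\mathsf{Set}$, and left adjoints preserve colimits; since the coproduct in $\mathsf{Set}$ is the disjoint union and the coproduct in $\mathsf{CL}$ is $\mathbin{\ast^{\mathsf{C}}}$ (which exists by Proposition~\ref{PGLcopr}), the claim is formal. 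I would spell this out concretely so as not to rely on the adjoint-functor machinery directly.

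First I would assemble the candidate structure map. Let $\mu_X\!:X\to\cF^{\mathsf{C}}\{X\}$ and $\mu_Y\!:Y\to\cF^{\mathsf{C}}\{Y\}$ be the universal maps, and let $\iota_1,\iota_2$ denote the two coproduct injections into $\cF^{\mathsf{C}}\{X\}\mathbin{\ast^{\mathsf{C}}\!}\cF^{\mathsf{C}}\{Y\}$. I define $\mu\!:X\coprod Y\to\cF^{\mathsf{C}}\{X\}\mathbin{\ast^{\mathsf{C}}\!}\cF^{\mathsf{C}}\{Y\}$ to equal $\iota_1\circ\mu_X$ on the copy of $X$ and $\iota_2\circ\mu_Y$ on the copy of $Y$. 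To verify the universal property of the free algebra, take any $\mathsf{CL}$ algebra $B$ and any map $\psi\!:X\coprod Y\to B$. Restricting to the two summands yields maps $\psi|_X\!:X\to B$ and $\psi|_Y\!:Y\to B$; freeness of $\cF^{\mathsf{C}}\{X\}$ and $\cF^{\mathsf{C}}\{Y\}$ gives unique continuous homomorphisms $\widehat{\psi|_X}$ and $\widehat{\psi|_Y}$ extending them, and the coproduct property then supplies a unique continuous homomorphism $h\!:\cF^{\mathsf{C}}\{X\}\mathbin{\ast^{\mathsf{C}}\!}\cF^{\mathsf{C}}\{Y\}\to B$ with $h\circ\iota_1=\widehat{\psi|_X}$ and $h\circ\iota_2=\widehat{\psi|_Y}$. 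By construction $h\circ\mu=\psi$. For uniqueness I would chain the two uniqueness assertions: any homomorphism $h'$ with $h'\circ\mu=\psi$ must satisfy $h'\circ\iota_1=\widehat{\psi|_X}$ and $h'\circ\iota_2=\widehat{\psi|_Y}$ (by freeness), hence must coincide with $h$ (by the coproduct property).

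I do not anticipate a genuine obstacle: the result is a formal consequence of the two universal properties already established in the excerpt, and the only point demanding a little care is the bookkeeping of the two nested uniqueness clauses when checking that the induced homomorphism $h$ is unique. Having thus verified that $\bigl(\cF^{\mathsf{C}}\{X\}\mathbin{\ast^{\mathsf{C}}\!}\cF^{\mathsf{C}}\{Y\},\,\mu\bigr)$ satisfies the defining property of the free $\mathsf{CL}$ algebra on $X\coprod Y$, the uniqueness part of Proposition~\ref{exisfree} yields the desired isomorphism $\cF^{\mathsf{C}}\{X\}\mathbin{\ast^{\mathsf{C}}\!}\cF^{\mathsf{C}}\{Y\}\cong\cF^{\mathsf{C}}\{X\coprod Y\}$, and naturality is immediate from the construction.
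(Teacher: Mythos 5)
Your proof is correct and is precisely the "straightforward" universal-property argument that the paper has in mind (the paper omits the details, stating only that the proof is straightforward). Both the formal adjunction remark and the explicit verification via the nested uniqueness clauses of Definition~\ref{freePGL} and Proposition~\ref{PGLcopr} are sound, so nothing is missing.
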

The proof is straightforward.

\begin{pr}\label{PGLcocomp}
Suppose, in addition, that $\mathsf{C}$ is closed under quotients.
Then the category $\mathsf{CL}$ is cocomplete, i.e.,  colimits exist.
\end{pr}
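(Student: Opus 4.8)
The plan is to invoke the standard categorical criterion that a category possessing all small coproducts and all coequalizers is cocomplete (this is dual to the familiar characterization of completeness via products and equalizers). Coproducts in $\mathsf{CL}$ were constructed in Proposition~\ref{PGLcopr}, so the only thing left to produce is coequalizers.

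To build the coequalizer of a parallel pair $f,g\colon A\to B$ in $\mathsf{CL}$, I would take $I$ to be the closed two-sided ideal of $B$ generated by the set $\{f(a)-g(a):a\in A\}$ and set $Q:=(B/I)\sptilde$, with $q\colon B\to Q$ the canonical map. Since $\mathsf{C}$ is stable under finite products, closed subalgebras, and---by the added hypothesis of this proposition---quotients, Proposition~\ref{PGLqugen} guarantees that $Q=(B/I)\sptilde$ is again locally in $\mathsf{C}$, that is, $Q\in\mathsf{CL}$. By construction $f(a)-g(a)\in I$ for every $a\in A$, so $qf=qg$.

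For the universal property, let $h\colon B\to C$ be any morphism in $\mathsf{CL}$ with $hf=hg$. Then $h$ annihilates each generator $f(a)-g(a)$, hence the algebraic ideal they generate, and by continuity the closed ideal $I$; thus $h$ factors through $B/I$ as a continuous homomorphism. Because $C$, being a projective limit of Banach algebras, is complete, this factorization extends uniquely to a continuous homomorphism $\bar h\colon Q\to C$ with $\bar h\,q=h$, uniqueness being forced by the density of the image of $B$ in $Q$. This exhibits $Q$ as the coequalizer, so $\mathsf{CL}$ has both coproducts and coequalizers and is therefore cocomplete.

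The main obstacle is the failure of completeness under quotients: a quotient of a complete locally convex algebra over a closed ideal need not be complete, so one is forced to pass to the completion $(B/I)\sptilde$. The twin facts that this completion stays inside $\mathsf{CL}$ (which is precisely where the closed-under-quotients hypothesis and Proposition~\ref{PGLqugen} enter) and that the universal factorization survives the completion step (using completeness of the target) are the crux of the argument; the remaining verifications are purely formal.
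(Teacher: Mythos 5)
Your proof is correct and follows essentially the same route as the paper: coproducts (Proposition~\ref{PGLcopr}) plus coequalizers obtained as $(B/I)\sptilde$ for the closed ideal $I$ generated by the elements $f(a)-g(a)$, with the closed-under-quotients hypothesis ensuring membership in $\mathsf{CL}$. The only differences are cosmetic: you spell out the universal-property verification that the paper dismisses as ``standard,'' and you correctly invoke the general Proposition~\ref{PGLqugen}, where the paper's text cites the $\mathsf{PG}$-specific Proposition~\ref{PGLqu} (and writes $(A/I)\sptilde$ for what should be $(B/I)\sptilde$) --- minor slips on the paper's side, not yours.
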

\begin{proof}
It is well known that it suffices to show that coproducts and coequalizers of parallel pairs exist; see, e.g.,  \cite[p.\,60, Theorem 2.8.1]{BorI}. The first property is established in  Proposition~\ref{PGLcopr}. The second property can be proved in a standard way. Indeed, let $\al,\be\!: A\to B$ be homomorphisms in $\mathsf{CL}$. If $I$ is the closed ideal of $B$ generated by elements of the form $\be(a)-\al(a)$, where $a\in A$, then $(A/I)\sptilde$ is in $\mathsf{CL}$  by Proposition~\ref{PGLqu} and
$A\to (A/I)\sptilde$ is a coequalizer.
\end{proof}

The following relative version of coproduct, similar to tensor product of algebras, is also of interest.

\begin{df}\label{PGtp}
Let $\{A_i\}_{i\in I}$ be a family of $\mathsf{CL}$ algebras. The \emph{tensor $\mathsf{CL}$ product} of $\{A_i\}$  is a $\mathsf{CL}$ algebra $A$ equipped with a family $\{\al_i\!:A_i\to A\}$  of continuous homomorphisms with pairwise commuting ranges satisfying the following universal property:
for every $\mathsf{CL}$ algebra $B$ and every family $\{\be_i\!:A_i\to B\}$  of continuous homomorphisms with pairwise commuting ranges there is a unique continuous homomorphism $\psi\!:A\to B$ such that $\be_i=\al_i\psi$ for every $i$.
\end{df}

\begin{pr}\label{exisCtenpr}
Suppose, in addition, that $\mathsf{C}$ is stable under passing to quotients.
Then the tensor $\mathsf{CL}$ product exists for every family in $\mathsf{CL}$ and is isomorphic to the completion of a quotient of the coproduct of this family.
\end{pr}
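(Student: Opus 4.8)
The plan is to construct the tensor $\mathsf{CL}$ product as a quotient of the coproduct, exactly mirroring how the ordinary tensor product of algebras is obtained from the free product by forcing the images of the factors to commute. First I would take the coproduct $C := {\ast^{\mathsf{C}}_{i\in I}}A_i$, which exists by Proposition~\ref{PGLcopr} (note that we are now assuming $\mathsf{C}$ is closed under quotients, so all the machinery of \S\,\ref{s:uncon} is available). Let $\ka_i\!:A_i\to C$ denote the canonical homomorphisms. To enforce commutativity of the ranges, I would let $I$ be the closed ideal of $C$ generated by all commutators $\ka_i(a)\ka_j(b)-\ka_j(b)\ka_i(a)$ with $a\in A_i$, $b\in A_j$, $i\neq j$, and set $A:=(C/I)\sptilde$. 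By Proposition~\ref{PGLqu} the completion $A$ is again a $\mathsf{CL}$ algebra. Composing $\ka_i$ with the quotient-and-completion map $C\to A$ gives homomorphisms $\al_i\!:A_i\to A$ whose ranges pairwise commute by construction, since the defining commutators map to zero.

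Next I would verify the universal property. Suppose $B$ is a $\mathsf{CL}$ algebra and $\{\be_i\!:A_i\to B\}$ is a family with pairwise commuting ranges. By the universal property of the coproduct, there is a unique continuous homomorphism $\wt\psi\!:C\to B$ with $\be_i=\ka_i\wt\psi$ for every $i$. Because the ranges of the $\be_i$ commute, $\wt\psi$ annihilates every generating commutator $\ka_i(a)\ka_j(b)-\ka_j(b)\ka_i(a)$, hence it annihilates the ideal $I$ (a continuous homomorphism vanishing on a generating set of a closed ideal vanishes on the whole ideal). Therefore $\wt\psi$ factors through the quotient $C/I$, and by continuity and the universal property of completion it extends uniquely to a continuous homomorphism $\psi\!:A\to B$ satisfying $\be_i=\al_i\psi$. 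Uniqueness of $\psi$ follows from the density of the image of $C$ in $A$ together with the uniqueness already established at the coproduct level.

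The one genuinely delicate point, and the step I expect to require the most care, is the factorization through the \emph{completion}: a homomorphism out of $C/I$ need not a priori extend to its completion $(C/I)\sptilde$, since quotients of complete locally convex spaces are not automatically complete. The extension is legitimate here precisely because the target $B$ is itself complete and the map $C/I\to B$ is continuous, so it extends to the completion by the universal property of completion; one must also check that this extended map remains multiplicative, which holds since multiplication is (separately) continuous and the image of $C/I$ is dense. Once this is in place, the closing assertion of the statement is immediate: by the very construction, $A$ is the completion of the quotient $C/I$ of the coproduct $C$ of the family, as claimed.
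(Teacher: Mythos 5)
Your proposal is correct and follows essentially the same route as the paper: form the coproduct $C={\ast^{\mathsf{C}}_{i\in I}}A_i$, pass to the completion of its quotient by the closed ideal generated by the commutators $[a_i,a_j]$ ($a_i\in A_i$, $a_j\in A_j$, $i\ne j$), which lies in $\mathsf{CL}$ by the quotient-stability hypothesis, and verify the universal property by factoring the map $C\to B$ supplied by the coproduct. Your extra care about extending from $C/I$ to its completion (continuity into the complete target $B$, multiplicativity via density) fills in a step the paper leaves implicit, so it is a welcome refinement rather than a deviation.
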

\begin{proof}
Let $\{A_i\}_{i\in I}$ be a family of $\mathsf{CL}$ algebras. Put $C\!:={\ast^{\mathsf{C}}_{i\in I}}A_i$ and denote by~$I$  the closed ideal in $C$ generated by the elements of the form $[a_i,a_j]$, where $a_i\in A_i$, $a_j\in A_j$  and $i\ne j$. We claim the family $A_i\to C\to (C/I)\sptilde$ of homomorphisms satisfies the required universal property.

Indeed, let $B$ be a $\mathsf{CL}$ algebra  and $\{\be_i\!:A_i\to B\}$  a family  of continuous homomorphisms with pairwise commuting ranges. Take the unique continuous homomorphism $\phi\!:C\to B$ extending all $\be_i$. Then $\psi(I)=0$. Hence there is a unique continuous homomorphism $\psi\!:(C/I)\sptilde \to B$ such that $\phi$ is the composition of the quotient homomorphism and~$\psi$. Since $(C/I)\sptilde$ is in $\mathsf{CL}$ by Proposition~\ref{PGLqu}, the proof is complete.
\end{proof}

We denote the tensor $\mathsf{CL}$ product of a family $\{A_i\}_{i\in I}$ of $\mathsf{CL}$ algebras by ${\otimes^{\mathsf{C}}_{i\in I}}A_i$ and the tensor $\mathsf{CL}$ product of a finite family $A_1,\ldots,A_n$ by $A_1\mathbin{\otimes^{\mathsf{C}}\!}\cdots\mathbin{\otimes^{\mathsf{C}}\!}A_n$.

\begin{rems}\label{exdeCitp}
(A)~The \emph{Arens--Michael tensor product} ${\otimes^{\,\mathsf{AM}}_{i\in I}}A_i$ of a family $\{A_i\}$ of Arens--Michael algebras is a partial case of Definition~\ref{PGtp} if the class of all Banach algebras is taken. When all $A_i$ are Arens--Michael algebras, it is not hard to see that ${({\otimes^{\,\mathsf{AM}}_{i\in I}}A_i)\sphat}{\,\,}^{\mathsf{C}}\cong {\otimes^{\mathsf{C}}_{i\in I}}\wh{A}_i^{\mathsf{C}}$. This gives another way of proving the existence in Proposition~\ref{exisCtenpr}.

(B)~It is well known that the finite projective tensor product $A_1\ptn \cdots \ptn A_n$ of a tuple $A_1,\ldots, A_n$ of Arens--Michael algebras satisfies the desired universal property; see, e.g., \cite[Chapter 2, \S\,5, pp.\,117--118, Theorem~5.2 and Exercise~5.4]{X1}. Thus it is isomorphic to the Arens--Michael tensor product $A_1\mathbin{\otimes^{\,\mathsf{AM}}\!}\cdots\mathbin{\otimes^{\,\mathsf{AM}}\!}A_n$.
\end{rems}

\begin{rem}\label{colfincop}
Note that infinite coproducts and tensor products can be approximated by finite ones. Indeed, let $\{A_i\}_{i\in I}$ be a family of $\mathsf{CL}$ algebras. For every finite tuple $\la$ of indices  in $I$ put $C_\la\!:={\ast^{\mathsf{C}}_{i\in \la}}A_i$.  Ordering the set of finite tuples by inclusion we obtain a directed system of $\mathsf{CL}$ algebras. A standard categorical argument shows that ${\ast^{\mathsf{C}}_{i\in I}}A_i\cong \varinjlim C_\la$ in $\mathsf{CL}$. Similarly, ${\otimes^{\mathsf{C}}_{i\in I}}A_i\cong \varinjlim D_\la$, where $D_\la\!:={\otimes^{\mathsf{C}}_{i\in \la}}A_i$. Cf.~\cite[Exercise 11B]{AHS}.
\end{rem}

\subsection*{The case when $\mathsf{C}=\mathsf{PG}$}
Note first that the class $\mathsf{PG}$ is stable under passing to closed subalgebras, finite products and
quotients  \cite[Propositions 2.11(A) and~2.3]{ArOld}, so all the results in this section can be applied to it. To emphasize the relationship with the theory of $C^\infty$-differentiable algebras, we also refer to tensor $\mathsf{PGL}$ algebras, free $\mathsf{PGL}$ algebras, symmetric $\mathsf{PGL}$ algebras and tensor $\mathsf{PGL}$ products as \emph{$C^\infty$-tensor algebras}, \emph{$C^\infty$-free algebras}, \emph{$C^\infty$-symmetric algebras} and \emph{$C^\infty$-tensor products}, respectively.

In the next section we need the following result on $C^\infty$-tensor products of $\mathsf{QDTS}$ algebras (see Definition~\ref{QPSDTde}.

\begin{pr}\label{prliCitp2}
If $A_1,\ldots, A_n$ are in $\mathsf{QDTS}$, then $A_1\mathbin{\otimes^{\,\mathsf{PG}}\!}\cdots \mathbin{\otimes^{\,\mathsf{PG}}\!} A_n\cong A_1\ptn\cdots\ptn A_n$.
\end{pr}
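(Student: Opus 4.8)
The plan is to recognize $A_1\ptn\cdots\ptn A_n$, equipped with its canonical structure homomorphisms, as an object satisfying the universal property that defines the $C^\infty$-tensor product $A_1\mathbin{\otimes^{\,\mathsf{PG}}\!}\cdots\mathbin{\otimes^{\,\mathsf{PG}}\!}A_n$ (Definition~\ref{PGtp} with $\mathsf{C}=\mathsf{PG}$). The asserted isomorphism then follows formally from the uniqueness up to isomorphism of an object characterized by a universal property.

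The one step carrying genuine content --- and the place where I expect the only real obstacle to sit --- is verifying that $A_1\ptn\cdots\ptn A_n$ is itself a $\mathsf{PGL}$ algebra, so that it is an admissible candidate for the tensor $\mathsf{PG}$-product. Fortunately this reduces to the binary case already in hand. Since the complete projective tensor product is associative, we may write $A_1\ptn\cdots\ptn A_n\cong(A_1\ptn\cdots\ptn A_{n-1})\ptn A_n$, and an induction on $n$ whose inductive step is exactly Proposition~\ref{QprliCitp} places this algebra in $\mathsf{QDTS}$; Proposition~\ref{QSDTPGL} then puts it in $\mathsf{PGL}$. Thus the stability question, which is the heart of the matter, is settled by the already-proved binary statement.

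It remains to check the universal property. The canonical homomorphisms $\al_i\!:A_i\to A_1\ptn\cdots\ptn A_n$, inserting the unit in every tensor factor other than the $i$-th, have pairwise commuting ranges, because an element sitting in the $i$-th slot commutes with an element sitting in the $j$-th slot when $i\ne j$. Now let $B$ be any $\mathsf{PGL}$ algebra and let $\be_i\!:A_i\to B$ be continuous homomorphisms with pairwise commuting ranges. By Remark~\ref{exdeCitp}(B) the projective tensor product enjoys the universal property of the Arens--Michael tensor product, so there is a unique continuous homomorphism $\psi\!:A_1\ptn\cdots\ptn A_n\to B$ with $\be_i=\al_i\psi$ for every $i$. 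Since every $\mathsf{PGL}$ algebra is in particular an Arens--Michael algebra, this property applies verbatim to the test algebra $B$, which is precisely what Definition~\ref{PGtp} requires. Hence $A_1\ptn\cdots\ptn A_n$, already known to lie in $\mathsf{PGL}$, solves the same universal problem as $A_1\mathbin{\otimes^{\,\mathsf{PG}}\!}\cdots\mathbin{\otimes^{\,\mathsf{PG}}\!}A_n$, and the two are therefore canonically isomorphic.
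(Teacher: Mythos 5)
Your proposal is correct and follows essentially the same route as the paper: both establish that the projective tensor product lies in $\mathsf{QDTS}$ (hence in $\mathsf{PGL}$) via Propositions~\ref{QprliCitp} and~\ref{QSDTPGL}, and then invoke the universal property of $\ptn$ for Arens--Michael algebras (Remark~\ref{exdeCitp}(B)) to verify Definition~\ref{PGtp}. The only difference is organizational --- the paper reduces to the case $n=2$ at the outset, while you handle general $n$ directly with an explicit induction for the $\mathsf{QDTS}$ membership --- which, if anything, makes the reduction step more transparent.
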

\begin{proof}
It suffices to consider the case when $n=2$. By Proposition~\ref{QprliCitp}, $A_1\ptn A_2$ is in  $\mathsf{QDTS}$ and hence by Proposition~\ref{QSDTPGL} in $\mathsf{PGL}$. Since $A_1\ptn A_2$ satisfies the universal property for pairs of continuous homomorphisms with commuting ranges to Arens--Michael algebras, the conditions of Definition~\ref{PGtp} hold for $A_1\ptn A_2$. Thus  $A_1\mathbin{\otimes^{\,\mathsf{PG}}\!}A_2\cong A_1\ptn A_2$.
\end{proof}

We are mainly interested in finitely generated algebras. In the case when $X$ is a finite set of cardinality~$k$, we put $\cF_{k}^{\,\mathsf{PG}}\!:=\cF^{\,\mathsf{PG}}\{X\}$. This object can be called `a finite-rank algebra of free $C^\infty$-functions'; see~\cite{ArNew}. Note that $C^\infty$-tensor algebra $\cF_{k}^{\,\mathsf{PG}}$ is isomorphic to the envelope of the free associative algebra $\R\langle x_1,\ldots,x_k\rangle$  with respect to $\mathsf{PG}$ while
$S^{\,\mathsf{PG}}(E)$, the $C^\infty$-symmetric algebra of an $k$-dimensional space~$E$, is isomorphic
the envelope of  $\R[x_1,\ldots,x_k]$,  which in turn is isomorphic to $C^\infty(\R^k)$; see \cite[Proposition~3.3]{ArNew}.

The the following theorem is implied by the structure results placed in Appendix, Theorems~\ref{multCiffk} and~\ref{fuctcafree}.

\begin{thm}\label{enUfk}
Let $k\in \N$. Then

\emph{(A)}~$\cF_{k}^{\,\mathsf{PG}}$ is a nuclear Fr\'echet space;

\emph{(B)}~$\cF_{k}^{\,\mathsf{PG}}$ is  in $\mathsf{DTS}$.
\end{thm}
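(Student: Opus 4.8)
The plan is to obtain both assertions as formal consequences of the explicit description of $\cF_k^{\,\mathsf{PG}}$ furnished by the Appendix, namely Theorems~\ref{multCiffk} and~\ref{fuctcafree}, combined with the permanence properties of nuclear spaces and of the class $\mathsf{DTS}$ established above. I would treat part~(B) first, since the nuclearity half of part~(A) is most cleanly deduced from it.

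For part~(B), I expect Theorem~\ref{multCiffk} to realize $\cF_k^{\,\mathsf{PG}}$ as a closed subalgebra of a countable product $\prod_j \bigl(C^\infty(\R^{n_j})\ptn R_j\bigr)$, where each $R_j$ is the path algebra of a finite quiver without oriented cycles. By Proposition~\ref{patalnc} each $R_j$ is a finite-dimensional algebra of polynomial growth, so by Proposition~\ref{PGfd} it embeds as a closed subalgebra of some $\rT_{p_j}$. Since $C^\infty(\R^{n_j})$ and the finite-dimensional $R_j$, $\rT_{p_j}$ are nuclear, the functor $C^\infty(\R^{n_j})\ptn(-)$ preserves topological injectivity (exactly as in the last step of the proof of Proposition~\ref{prliCitp}), so $C^\infty(\R^{n_j})\ptn R_j$ is a closed subalgebra of $C^\infty(\R^{n_j})\ptn \rT_{p_j}$. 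Consequently the product $\prod_j\bigl(C^\infty(\R^{n_j})\ptn R_j\bigr)$ is a closed subalgebra of $\prod_j\bigl(C^\infty(\R^{n_j})\ptn\rT_{p_j}\bigr)$; as $\cF_k^{\,\mathsf{PG}}$ is in turn a closed subalgebra of the former and each $C^\infty(\R^{n_j})$ is a $C^\infty$-differentiable algebra, transitivity gives $\cF_k^{\,\mathsf{PG}}\in\mathsf{DTS}$ straight from Definition~\ref{PSDTde}.

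For part~(A), I would split the two properties. The Fr\'echet property follows because Theorem~\ref{fuctcafree} presents $\cF_k^{\,\mathsf{PG}}$ as a projective limit of Banach algebras indexed by a countable directed set, so it is metrizable and complete. Nuclearity then follows from part~(B): each $C^\infty(\R^{n_j})$ is a nuclear Fr\'echet space and each $\rT_{p_j}$ is finite dimensional, hence $C^\infty(\R^{n_j})\ptn\rT_{p_j}$ is nuclear; the class of nuclear spaces is stable under products and under passing to closed subspaces (the same permanence facts invoked in Proposition~\ref{prliCitp}), so the closed subalgebra $\cF_k^{\,\mathsf{PG}}$ of $\prod_j\bigl(C^\infty(\R^{n_j})\ptn\rT_{p_j}\bigr)$ is nuclear.

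The essential mathematical content is carried entirely by the Appendix theorems; the present deduction is a verification against the defining properties of $\mathsf{DTS}$ and of nuclear Fr\'echet spaces. The one point deserving care is the passage from the algebraic description in Theorem~\ref{multCiffk} to a genuine \emph{closed} embedding, and I expect this topological-injectivity step, rather than any algebraic manipulation, to be the only delicate part: it is precisely the nuclearity of all spaces involved that guarantees $C^\infty(\R^{n_j})\ptn(-)$ keeps subalgebras topologically embedded with closed range.
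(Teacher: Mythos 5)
Your proposal is correct and is essentially the paper's own argument: both parts rest entirely on the Appendix results (Theorem~\ref{fuctcafree} identifying $\cF_{k}^{\,\mathsf{PG}}$ with $C^\infty_{\ff_k}$, and Theorem~\ref{multCiffk} placing $C^\infty_{\ff_k}$ in $\mathsf{DTS}$) together with the standard permanence of metrizability and nuclearity under countable products, closed subspaces and projective tensor products. The only cosmetic differences are that the paper reads nuclearity directly off the explicit description $C^\infty_{\ff_k}=C^\infty(\R^k)\ptn[T](C^\infty(X))$ rather than routing it through the $\mathsf{DTS}$ embedding as you do, and that your use of Proposition~\ref{PGfd} to replace path algebras by subalgebras of $\rT_p$ is precisely the remark the paper makes after Definition~\ref{PSDTde}; both deductions are sound.
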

\begin{proof}
(A)~By Theorem~\ref{fuctcafree}, the underlying locally convex space of $\cF_{k}^{\,\mathsf{PG}}$ is isomorphic to the space $C^\infty_{\ff_k}$ defined in~\eqref{defCifk}. Note that
$C^\infty_{\ff_k}$  is obtained by applying of the operations of Cartesian product and finite projective tensor product to spaces of the form $C^\infty(M)$, where $M$ is a finite-dimensional real vector space. Since spaces of the form $C^\infty(M)$ are nuclear Fr\'echet spaces, so is $C^\infty_{\ff_k}$.

Part~(B) is contained in Theorem~\ref{multCiffk}.
\end{proof}

\section{Finitely $C^\infty$-generated algebras}
\label{s:finCial}

In this section we introduce our main subject of study --- a finitely $C^\infty$-generated (associative) algebra.  This notion is on the one hand a non-commutative version of the notion of $C^\infty$-differentiable algebra, but on the other hand it is a $C^\infty$ version of the notion of holomorphically finitely generated algebra.

Recall that every $\mathsf{PGL}$ algebra is a quotient of some $C^\infty$-tensor algebra; see Proposition~\ref{PGLisqu}. The similar result holds in the holomorphic case; namely, every  Arens--Michael algebra is a quotient of some analytic tensor algebra. In \cite{Pi14, Pi15} Pirkovskii introduced holomorphically finitely generated algebras as quotients of analytic tensor algebras of finite-dimensional spaces. We now consider a  similar subclass in $\mathsf{PGL}$. Also, every $C^\infty$-differentiable algebra (i.e., a quotient of some $C^\infty(\R^k)$) is a $\mathsf{PGL}$ algebra; see Corollary~\ref{CdiffPGL}. Here we replace the free commutative  algebra $C^\infty(\R^k)$ by the free $\mathsf{PGL}$ algebra $\cF_{k}^{\,\mathsf{PG}}$ (eq., the $C^\infty$-tensor  algebra with $k$-dimensional generating space).

\begin{df}
A real Arens--Michael algebra is said to be \emph{finitely $C^\infty$-generated} if it is isomorphic to a quotient of~$\cF_{k}^{\,\mathsf{PG}}$  for some $k\in\N$.
\end{df}
We have two reasons for introducing this class: our main examples are  finitely $C^\infty$-generated and the good behaviour of this class with respect to the projective tensor product.

\begin{pr}
Every finitely $C^\infty$-generated algebra is a nuclear Fr\'echet space.
\end{pr}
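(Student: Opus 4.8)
The plan is to reduce the claim to the corresponding property of the free object $\cF_{k}^{\,\mathsf{PG}}$, which has already been established. By definition, a finitely $C^\infty$-generated algebra $A$ is isomorphic to a quotient $\cF_{k}^{\,\mathsf{PG}}/I$ (completed over a closed ideal $I$) for some $k\in\N$. By Theorem~\ref{enUfk}(A), the free algebra $\cF_{k}^{\,\mathsf{PG}}$ is a nuclear Fr\'echet space, so it suffices to show that each of the two properties --- being Fr\'echet and being nuclear --- is inherited by completed quotients.

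First I would treat the Fr\'echet property. Since $\cF_{k}^{\,\mathsf{PG}}$ is a Fr\'echet space (in particular metrizable and complete), its quotient $\cF_{k}^{\,\mathsf{PG}}/I$ over the closed subspace $I$ is again metrizable; moreover, a quotient of a complete metrizable locally convex space by a closed subspace is automatically complete. Hence the quotient is already complete, so $(\cF_{k}^{\,\mathsf{PG}}/I)\sptilde=\cF_{k}^{\,\mathsf{PG}}/I$ and $A$ is a Fr\'echet space. This also removes any worry about whether completion changes the underlying space in this case.

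Next I would address nuclearity. The class of nuclear spaces is stable under passing to quotients over closed subspaces, just as it is stable under products and closed subspaces; this is the companion permanence property to the one already invoked in the proof of Proposition~\ref{prliCitp} (see \cite[Chapter~3, \S\,7.4]{Scha}). Applying this to the nuclear Fr\'echet space $\cF_{k}^{\,\mathsf{PG}}$ and the closed ideal $I$ shows that $\cF_{k}^{\,\mathsf{PG}}/I$ is nuclear. Combining the two steps, $A$ is a nuclear Fr\'echet space.

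The argument is essentially formal once Theorem~\ref{enUfk}(A) is in hand, so there is no substantial obstacle; the only point requiring a little care is the verification that the quotient is already complete, which is exactly where the Fr\'echet (metrizability) hypothesis on $\cF_{k}^{\,\mathsf{PG}}$ is used and where the general subtlety about completeness of locally convex quotients --- noted just before Lemma~\ref{proquo} --- is harmless.
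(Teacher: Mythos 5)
Your proof is correct and follows essentially the same route as the paper: the paper's proof simply invokes Theorem~\ref{enUfk} to get that $\cF_{k}^{\,\mathsf{PG}}$ is a nuclear Fr\'echet space and notes that all its quotients inherit this property. You have merely spelled out the standard permanence facts (completeness and metrizability of quotients of Fr\'echet spaces, stability of nuclearity under quotients by closed subspaces) that the paper leaves implicit.
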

\begin{proof}
Let $k\in \N$.
By Theorem~\ref{enUfk}, $\cF_{k}^{\,\mathsf{PG}}$ is a nuclear Fr\'echet space and so are all its quotients.
\end{proof}

Finitely $C^\infty$-generated algebras form a full subcategory in $\mathsf{PGL}$. We also consider the smaller subcategory of commutative finitely $C^\infty$-generated algebras.

\begin{pr}\label{cifgcial}
A commutative real Arens--Michael algebra is finitely $C^\infty$-generated if and only if it is a $C^\infty$-differentiable algebra.
\end{pr}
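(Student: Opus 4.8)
The plan is to prove the two implications separately, relying on the universal property of the $C^\infty$-symmetric algebra and the relationship between $S^{\,\mathsf{PG}}(E)$ and $C^\infty(\R^k)$ recorded earlier in the excerpt.

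For the direction asserting that every $C^\infty$-differentiable algebra is commutative finitely $C^\infty$-generated, I would argue as follows. Let $A$ be a $C^\infty$-differentiable algebra, so by definition $A\cong C^\infty(\R^k)/J$ for some $k\in\N$ and some closed ideal~$J$. As noted in the excerpt, $C^\infty(\R^k)$ is isomorphic to the $C^\infty$-symmetric algebra $S^{\,\mathsf{PG}}(E)$ of a $k$-dimensional space~$E$ (via \cite[Proposition~3.3]{ArNew}). The key point is that $S^{\,\mathsf{PG}}(E)$ is itself a quotient of the $C^\infty$-tensor algebra $\cF_{k}^{\,\mathsf{PG}}$: indeed, the identity map on the $k$-dimensional generating space induces, by the universal property of $\cF_{k}^{\,\mathsf{PG}}$ (Definition~\ref{freePGL}), a continuous surjective homomorphism $\cF_{k}^{\,\mathsf{PG}}\to S^{\,\mathsf{PG}}(E)$ whose kernel is the closed ideal generated by the commutators. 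Composing with the quotient map $S^{\,\mathsf{PG}}(E)\to A$ exhibits $A$ as a quotient of $\cF_{k}^{\,\mathsf{PG}}$, and since $A$ is commutative, it is a commutative finitely $C^\infty$-generated algebra. (One should check surjectivity of the composite and that the image is all of~$A$; since both maps are quotient maps this is routine, using that $C^\infty(\R^k)$ is Fr\'echet so that the quotient is already complete.)

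For the converse, suppose $A$ is commutative and finitely $C^\infty$-generated, so $A\cong\cF_{k}^{\,\mathsf{PG}}/I$ for some closed ideal~$I$. Because $A$ is commutative, the image of the commutator ideal $[\cF_{k}^{\,\mathsf{PG}},\cF_{k}^{\,\mathsf{PG}}]$ lies in~$I$, so the quotient map factors through the commutative quotient $S^{\,\mathsf{PG}}(E)\cong C^\infty(\R^k)$. Concretely, $A$ is then a quotient of $C^\infty(\R^k)$ by the closed ideal $J$ which is the image of~$I$. Since $\cF_{k}^{\,\mathsf{PG}}$ is a Fr\'echet space by Theorem~\ref{enUfk}(A), so is its quotient $A$, and $C^\infty(\R^k)$ is a Fr\'echet--Arens--Michael algebra; hence the quotient $C^\infty(\R^k)/J$ is again complete and Fr\'echet--Arens--Michael, which is exactly the defining form of a $C^\infty$-differentiable algebra.

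I expect the main obstacle to be the bookkeeping around completeness and the precise identification of the ideals. The factorization through the commutative quotient needs the commutator ideal to be \emph{closed} (so that $S^{\,\mathsf{PG}}(E)$ is its genuine quotient and the universal property of Proposition~\ref{exisCisym} applies), and one must verify that the image of~$I$ in $C^\infty(\R^k)$ is a closed ideal and that the induced map $C^\infty(\R^k)/\overline{(\text{image of }I)}\to A$ is a topological isomorphism. The cleanest route is to invoke Proposition~\ref{PGquot} with $\mathsf{C}=\mathsf{PG}$: applying the envelope to the abelianization and using that $\wh{\R[x_1,\ldots,x_k]}^{\,\mathsf{PG}}\cong C^\infty(\R^k)$ is Fr\'echet lets the completion signs be dropped throughout, so that the quotients match up on the nose rather than only up to completion.
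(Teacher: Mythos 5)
Your proof is correct and follows essentially the same route as the paper: both directions rest on the fact that $C^\infty(\R^k)$ is the quotient of $\cF_{k}^{\,\mathsf{PG}}$ by the closed ideal generated by commutators, so that quotients of $C^\infty(\R^k)$ are quotients of $\cF_{k}^{\,\mathsf{PG}}$, and conversely any commutative quotient of $\cF_{k}^{\,\mathsf{PG}}$ factors through $C^\infty(\R^k)$. Your extra bookkeeping (closedness of the image of~$I$, completeness via the Fr\'echet property, the option of invoking Proposition~\ref{PGquot}) is sound but not needed beyond what the paper's terse argument already implicitly uses.
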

\begin{proof}
Take $k\in\N$ and note that $C^\infty(\R^k)$is the quotient of $\cF_{k}^{\,\mathsf{PG}}$ by the closed ideal generated by commutators. So every $C^\infty$-differentiable algebra, being a quotient of some $C^\infty(\R^k)$, is also a quotient of $\cF_{k}^{\,\mathsf{PG}}$.

On the other hand, every commutative quotient of $\cF_{k}^{\,\mathsf{PG}}$ is a quotient of $C^\infty(\R^k)$ because the quotient map factors on $\cF_{k}^{\,\mathsf{PG}}\to C^\infty(\R^k)$.
\end{proof}

Recall that the \emph{real spectrum} of an $C^\infty$-differentiable algebra  $A$ is the locally ringed $\R$-space $(\mathop{\mathrm{Spec}}_r A,\, \wt A)$, where $\mathop{\mathrm{Spec}}_r A$ is the set of homomorphisms to $A$ to $\R$ and $\wt A$ is a naturally defined sheaf of $C^\infty$-differentiable algebra;  for details see \cite[\S\,3]{NaSa}.
Also, a locally ringed $\R$-space $(X,\,\cO_X)$ is said to be an \emph{affine $C^\infty$-differentiable space} (of finite type) if it is isomorphic to the real spectrum of some $C^\infty$-differentiable algebra \cite[p.\,44]{NaSa}.

The functors $(X,\,\cO_X)\rightsquigarrow\cO_X(X)$ and $A \rightsquigarrow (\mathop{\mathrm{Spec}}_r A,\, \wt A)$  define an anti-equivalence of the category of affine $C^\infty$-differentiable spaces with the category of $C^\infty$-differentiable algebras \cite[p.\,47, Theorem 3.20]{NaSa}. In view of the fact, that every homomorphism of $C^\infty$-differentiable algebras is automatically continuous \cite[p.\,33, Corollary 2.22]{NaSa}, we obtain the following corollary of Proposition~\ref{cifgcial}.

\begin{co}\label{eqcat}
The functors $(X,\,\cO_X)\rightsquigarrow\cO_X(X)$ and $A \rightsquigarrow (\mathop{\mathrm{Spec}}_r A,\, \wt A)$ define an anti-equivalence of the category of affine $C^\infty$-differentiable spaces with the category of commutative finitely $C^\infty$-generated  algebras.
\end{co}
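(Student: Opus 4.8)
The plan is to derive Corollary~\ref{eqcat} directly from the cited anti-equivalence in \cite[Theorem 3.20]{NaSa} by combining it with the identification of objects provided by Proposition~\ref{cifgcial}. The key observation is that the two functors $(X,\,\cO_X)\rightsquigarrow\cO_X(X)$ and $A \rightsquigarrow (\mathop{\mathrm{Spec}}_r A,\, \wt A)$ already furnish an anti-equivalence between affine $C^\infty$-differentiable spaces and $C^\infty$-differentiable algebras; what remains is to check that nothing changes when we rename the target category as the category of commutative finitely $C^\infty$-generated algebras.

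First I would invoke Proposition~\ref{cifgcial}, which states that a commutative real Arens--Michael algebra is finitely $C^\infty$-generated if and only if it is a $C^\infty$-differentiable algebra. This shows that the two categories have exactly the same objects. Second, I would verify that they also have the same morphisms: the morphisms in the category of $C^\infty$-differentiable algebras are, by definition, the continuous unital homomorphisms, while morphisms in the subcategory of commutative finitely $C^\infty$-generated algebras (viewed inside $\mathsf{PGL}$) are likewise continuous unital homomorphisms. Here the automatic-continuity result \cite[Corollary 2.22]{NaSa}, cited just above the statement, does the work: every algebra homomorphism between $C^\infty$-differentiable algebras is automatically continuous, so specifying ``continuous homomorphism'' or merely ``homomorphism'' picks out the same morphism sets. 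Thus the category of commutative finitely $C^\infty$-generated algebras is literally identical, not just equivalent, to the category of $C^\infty$-differentiable algebras.

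Given this identification of categories, the corollary follows immediately: the functors of \cite[Theorem 3.20]{NaSa} were already shown there to be mutually quasi-inverse (up to natural isomorphism) anti-equivalences, and transporting that statement along the equality of categories established above yields precisely the asserted anti-equivalence between affine $C^\infty$-differentiable spaces and commutative finitely $C^\infty$-generated algebras.

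I do not anticipate a genuine obstacle here, since the corollary is a repackaging of an established theorem. The only point requiring care is the morphism-level check: one must confirm that the ambient category in which we take the subcategory of commutative finitely $C^\infty$-generated algebras uses continuous homomorphisms as morphisms, so that the automatic continuity statement actually closes the gap and the two categories coincide on the nose. Once that bookkeeping is done, there is nothing further to prove.
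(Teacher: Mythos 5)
Your proposal is correct and matches the paper's own argument: the paper likewise obtains the corollary by combining the anti-equivalence of \cite[Theorem 3.20]{NaSa} with the object-level identification in Proposition~\ref{cifgcial}, using the automatic continuity of homomorphisms between $C^\infty$-differentiable algebras \cite[Corollary 2.22]{NaSa} to settle the morphism-level bookkeeping. Nothing in your write-up deviates from, or falls short of, the paper's reasoning.
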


This result is an analogue of the Forster--Pirkovskii theorem, which states that the global sections functor is an anti-equivalence between the category of Stein spaces of finite embedding dimension and the category of commutative holomorphically finitely generated algebras  \cite[Theorem  3.23]{Pi15}.

\begin{pr}\label{fcgBrad}
A finitely $C^\infty$-generated algebra $A$ is commutative modulo radical and  $A/\Rad A$ is a $C^\infty$-differentiable algebra.
\end{pr}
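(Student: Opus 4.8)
The plan is to deduce the first assertion directly and to reduce the second to the single fact that $\Rad A$ is closed, after which Proposition~\ref{cifgcial} finishes everything. Since $A$ is finitely $C^\infty$-generated, it is a quotient of $\cF_k^{\,\mathsf{PG}}$ for some $k\in\N$, and $\cF_k^{\,\mathsf{PG}}\in\mathsf{DTS}\subset\mathsf{PGL}$ by Theorem~\ref{enUfk}(B) and Proposition~\ref{PSDTPGL}; hence $A\in\mathsf{PGL}$ by Proposition~\ref{PGLqu}. Proposition~\ref{PGLcmR} then gives at once that $A$ is commutative modulo its radical.

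For the second assertion, first observe that once $\Rad A$ is known to be closed the conclusion is formal. Indeed, $A$ is a Fréchet algebra (a quotient of the nuclear Fréchet algebra $\cF_k^{\,\mathsf{PG}}$, cf.\ Theorem~\ref{enUfk}(A)), so $A/\Rad A$ is again a complete Fréchet--Arens--Michael algebra; it is commutative by the first part; and, writing $\pi\!:\cF_k^{\,\mathsf{PG}}\to A$ for the quotient map, it is the quotient of $\cF_k^{\,\mathsf{PG}}$ by the closed ideal $\pi^{-1}(\Rad A)$. Thus $A/\Rad A$ is a commutative finitely $C^\infty$-generated algebra, and Proposition~\ref{cifgcial} identifies it as a $C^\infty$-differentiable algebra.

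It remains to show that $\Rad A$ is closed. I would write $A=\varprojlim_\nu A_\nu$ with each $A_\nu\in\mathsf{PG}$ and canonical homomorphisms $\tau_\nu\!:A\to A_\nu$, and prove
\[
\Rad A=\bigcap_\nu \tau_\nu^{-1}(\Rad A_\nu),
\]
the right-hand side being a closed ideal since each $\Rad A_\nu$ is a closed ideal of the Banach algebra $A_\nu$. The inclusion $\supseteq$ is the easy one: if $\tau_\nu(x)\in\Rad A_\nu$ for all $\nu$, then for every $c\in A$ and every $\nu$ the element $\tau_\nu(1-cx)=1-\tau_\nu(c)\tau_\nu(x)$ is invertible in $A_\nu$, whence $1-cx$ is invertible in $A$ by the projective-limit criterion for invertibility \cite[p.\,280, Proposition V.2.11]{X2}; as $c$ is arbitrary, $x\in\Rad A$.

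The reverse inclusion is the crux, and I expect it to be the main obstacle, because $\tau_\nu$ need not be surjective and so the radical cannot be transported naively. Instead I would route through the semisimple reduction: fix $\nu$, let $\pi_\nu\!:A_\nu\to S_\nu:=A_\nu/\Rad A_\nu$ be the quotient map, and note that $S_\nu$ is of polynomial growth as a quotient of $A_\nu$ \cite[Proposition~2.3]{ArOld}, hence commutative modulo its radical and strictly real \cite[Proposition~2.5]{ArOld}; being semisimple, it is then commutative, and being strictly real, all its characters take values in $\R$ and separate its points. Now for any such character $\chi$ of $S_\nu$ the composite $\chi\circ\pi_\nu\circ\tau_\nu$ is a character of $A$, and the Jacobson radical lies in the kernel of every character (a character is a one-dimensional irreducible representation). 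Hence $\chi(\pi_\nu\tau_\nu(x))=0$ for all $x\in\Rad A$ and all $\chi$, so $\pi_\nu\tau_\nu(x)=0$, i.e.\ $\tau_\nu(x)\in\Rad A_\nu$. This yields $\Rad A\subseteq\bigcap_\nu\tau_\nu^{-1}(\Rad A_\nu)$, establishes the displayed equality, and shows $\Rad A$ is closed, completing the argument.
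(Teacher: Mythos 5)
Your proposal is correct, and at the top level it follows the same route as the paper: Proposition~\ref{PGLcmR} gives commutativity modulo the radical, and Proposition~\ref{cifgcial} identifies $A/\Rad A$ as a $C^\infty$-differentiable algebra. The difference is one of substance rather than strategy: the paper's proof consists of exactly those two citations and silently assumes the point you isolate as the crux, namely that $\Rad A$ is closed. Without that, $A/\Rad A$ is not even a Hausdorff Fr\'echet algebra, let alone a commutative finitely $C^\infty$-generated one, so Proposition~\ref{cifgcial} could not be invoked. You fill this gap by proving $\Rad A=\bigcap_\nu\tau_\nu^{-1}(\Rad A_\nu)$. Your easy inclusion is the same invertibility argument as in the paper's Lemma~\ref{PrlmodR}, while your hard inclusion is a genuine addition: it is not automatic, since the $\tau_\nu$ need not be surjective and for a general projective limit of Banach algebras there is no a priori inclusion $\tau_\nu(\Rad A)\subseteq\Rad A_\nu$. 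Your character-separation argument makes essential (and correct) use of the $\mathsf{PG}$ structure: $A_\nu/\Rad A_\nu$ is of polynomial growth, hence commutative (being semisimple and commutative modulo its radical) and strictly real, so its $\R$-valued characters separate its points, while every character of $A$ annihilates $\Rad A$ because it is a one-dimensional irreducible representation. These are the same facts from \cite{ArOld} that the paper itself uses in the proof of Theorem~\ref{polfrfd}, deployed here for a different purpose. In short, your write-up is a correct proof that is strictly more complete than the one printed in the paper; what the extra work buys is the legitimacy of applying Proposition~\ref{cifgcial} to $A/\Rad A$ as a topological quotient of $\cF_{k}^{\,\mathsf{PG}}$ by a closed ideal.
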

\begin{proof}
Since $A$ is in $\mathsf{PGL}$, the first assertion is implied by Proposition~\ref{PGLcmR}.
The second assertion follows from Proposition~\ref{cifgcial}.
\end{proof}

Now we turn to envelopes.

\begin{pr}\label{envfing}
\emph{(A)}~If $A$ is a finitely generated real algebra, then $\wh A^{\,\mathsf{PG}}$ is finitely $C^\infty$-generated.

\emph{(B)}~If, in addition, $A$ is commutative, then $\wh A^{\,\mathsf{PG}}$ is a $C^\infty$-differentiable algebra.
\end{pr}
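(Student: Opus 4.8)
The plan is to handle both parts by a single device: realize $A$ as a quotient of a free object whose $\mathsf{PG}$-envelope is already identified, and then push the quotient through the enveloping functor by means of Proposition~\ref{PGquot}. Recall that $\mathsf{PG}$ is stable under passing to finite products, closed subalgebras and quotients \cite[Propositions 2.11(A) and~2.3]{ArOld}, so Proposition~\ref{PGquot} applies with $\mathsf{C}=\mathsf{PG}$; this is the only structural input, and both parts reduce to bookkeeping around it.

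For Part (A), I would first choose a finite generating tuple $x_1,\dots,x_k$ for the associative $\R$-algebra $A$ and write $A\cong\R\langle x_1,\dots,x_k\rangle/I$ for a (two-sided) ideal $I$ of the free associative algebra. Since $\cF_k^{\,\mathsf{PG}}$ is isomorphic to the envelope of $\R\langle x_1,\dots,x_k\rangle$ with respect to $\mathsf{PG}$, Proposition~\ref{PGquot} yields $\wh A^{\,\mathsf{PG}}\cong(\cF_k^{\,\mathsf{PG}}/J)\sptilde$, where $J$ is the closure of the image of $I$ in $\cF_k^{\,\mathsf{PG}}$. The only point requiring attention is completeness of the quotient: by Theorem~\ref{enUfk}(A) the algebra $\cF_k^{\,\mathsf{PG}}$ is a (nuclear) Fr\'echet space, so its quotient by the closed ideal $J$ is again Fr\'echet, and the outer completion is redundant. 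Hence $\wh A^{\,\mathsf{PG}}\cong\cF_k^{\,\mathsf{PG}}/J$ exhibits $\wh A^{\,\mathsf{PG}}$ literally as a quotient of $\cF_k^{\,\mathsf{PG}}$, i.e.\ as a finitely $C^\infty$-generated algebra.

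For Part (B), if $A$ is in addition commutative then it is a finitely generated commutative real algebra, and the conclusion is precisely Proposition~\ref{fgCidiff}. Should a self-contained derivation be preferred, I would rerun the argument of Part (A) with the commutative free algebra: write $A\cong\R[x_1,\dots,x_k]/I$, use that the $\mathsf{PG}$-envelope of $\R[x_1,\dots,x_k]$ is $C^\infty(\R^k)$, and combine Proposition~\ref{PGquot} with the Fr\'echet property of $C^\infty(\R^k)$ to obtain $\wh A^{\,\mathsf{PG}}\cong C^\infty(\R^k)/J$. The latter is the quotient of $C^\infty(\R^k)$ by a closed ideal, which is by definition a $C^\infty$-differentiable algebra; note that no conflict arises from computing the envelope in the full category, since $C^\infty(\R^k)$ is already the $\mathsf{PG}$-envelope of $\R[x_1,\dots,x_k]$ there.

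Overall the proof is an assembly of results already in hand, and I anticipate no genuine obstacle. The one delicate step is the transition from ``completion of a quotient'' to ``quotient'': in general Proposition~\ref{PGquot} only produces the completion $(\cF_k^{\,\mathsf{PG}}/J)\sptilde$, and it is the Fr\'echet property supplied by Theorem~\ref{enUfk}(A) (together with the analogous property of $C^\infty(\R^k)$ in Part (B)) that permits dropping the completion, so that $\wh A^{\,\mathsf{PG}}$ lands in the desired class on the nose.
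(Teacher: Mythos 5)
Your proof is correct, and Part (A) is essentially identical to the paper's: both write $A$ as a quotient of $\R\langle x_1,\dots,x_k\rangle$, identify the $\mathsf{PG}$-envelope of the free algebra with $\cF_k^{\,\mathsf{PG}}$, and apply Proposition~\ref{PGquot} with $\mathsf{C}=\mathsf{PG}$, using the Fr\'echet property of $\cF_k^{\,\mathsf{PG}}$ (Theorem~\ref{enUfk}(A)) to drop the completion --- exactly the ``in particular'' clause of Proposition~\ref{PGquot}.

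For Part (B) you take a mildly different route. The paper deduces (B) from Part (A) together with Proposition~\ref{fcgBrad} (a finitely $C^\infty$-generated algebra is commutative modulo radical and its quotient by the radical is $C^\infty$-differentiable), which implicitly also relies on $\wh A^{\,\mathsf{PG}}$ inheriting commutativity from the dense image of $A$. You instead either cite Proposition~\ref{fgCidiff} --- which is literally the same statement, already established in \S\,\ref{s:PGL} as an immediate consequence of Proposition~\ref{PGquot}, so there is no circularity --- or rerun the Part (A) argument with $\R[x_1,\dots,x_k]$ and its envelope $C^\infty(\R^k)$, landing on a quotient of $C^\infty(\R^k)$ by a closed ideal, which is $C^\infty$-differentiable by definition. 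Your version is, if anything, cleaner: it avoids any discussion of the radical or of commutativity of the envelope, both of which the paper's one-line citation of Proposition~\ref{fcgBrad} quietly presupposes.
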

\begin{proof}
(A)~Let $A=\R\langle x_1,\ldots,x_k\rangle/I$ for some $k$ and some ideal $I$.
Since $\cF_{k}^{\,\mathsf{PG}}$ is a Fr\'echet space, it follows from  Proposition~\ref{PGquot} in the case when $\mathsf{C}=\mathsf{PG}$ that $\wh A^{\,\mathsf{PG}}\cong \cF_{k}^{\,\mathsf{PG}}/J$, where $J$ is a closed ideal. Thus $\wh A^{\,\mathsf{PG}}$ is is finitely $C^\infty$-generated.

Part~(B) follows from Part~(A) and Proposition~\ref{fcgBrad}.
\end{proof}

\begin{rem}\label{qalg}
Denote  by $\mathcal{R}(\R^2_q)$ the coordinate algebra of the quantum plane over~$\R$, i.e., the universal real algebra with generators $x$ and $y$ satisfying the relation $xy = qyx$, where $q\in\R\setminus\{0 \}$.

For $q\in\R\setminus\{0\}$ also denote  by $\cR(\SL_q(2,\R))$ the coordinate algebra of the quantum group $\SL_q(2,\R)$, i.e.,  the universal real associative algebra with generators $a$, $b$, $c$, $d$ and relations $ab = qba$, $ac = qca$, $bc = cb$,
$bd = qdb$, $cd = qdc$, $da- q^{-1}bc=1$ and $ad - qbc=1$;
see, e.g., \cite[\S\,4.1.2]{KSc}.

By Proposition~\ref{envfing}, the envelopes  of the algebras $\mathcal{R}(\R^2_q)$ and $\cR(\SL_q(2,\R))$ with respect to $\mathsf{PG}$ are finitely $C^\infty$-generated and, moreover, so is  the envelope of $U(\fg)$ for a finite-dimensional Lie algebra~$\fg$. Explicit forms of these envelopes are described in~\cite[\S\,4]{ArNew}. For a discussion on Hopf algebra structures see \S\,\ref{s:Hopf}.
\end{rem}

\begin{co}\label{finfCi}
Every finite-dimensional algebra of polynomial growth is finitely $C^\infty$-generated.
\end{co}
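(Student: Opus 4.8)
The plan is to prove that every finite-dimensional algebra of polynomial growth is finitely $C^\infty$-generated by presenting it as a quotient of the free $\mathsf{PGL}$ algebra $\cF_{k}^{\,\mathsf{PG}}$ for an appropriate $k$. First I would observe that a finite-dimensional algebra $A$ is, in particular, a finitely generated real associative algebra: choosing a finite linear basis, or any finite generating set $x_1,\ldots,x_k$, realizes $A$ as a quotient $\R\langle x_1,\ldots,x_k\rangle/I$ of the free associative algebra modulo some (two-sided) ideal $I$. This is purely algebraic and requires no growth hypothesis.

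The key step is then to invoke Proposition~\ref{envfing}(A), which asserts precisely that the envelope $\wh A^{\,\mathsf{PG}}$ of a finitely generated real algebra $A$ is finitely $C^\infty$-generated. So the crux is to show that for a finite-dimensional algebra $A$ of polynomial growth, the envelope $\wh A^{\,\mathsf{PG}}$ coincides with $A$ itself (up to topological isomorphism). Here the hypothesis that $A$ is of polynomial growth enters: by Definition~\ref{defengen}, $\wh A^{\,\mathsf{PG}}$ is obtained by applying the enveloping functor with respect to the class $\mathsf{PG}$, and the universal arrow $\io_A\!:A\to\wh A^{\,\mathsf{PG}}$ is initial among continuous homomorphisms from $A$ into $\mathsf{PG}$ algebras. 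Since $A$ is \emph{already} a Banach algebra of polynomial growth, i.e. $A\in\mathsf{PG}\subset\mathsf{PGL}$, the identity map on $A$ shows that $A$ satisfies the universal property of its own envelope; by uniqueness of adjoints, $\io_A$ is an isomorphism $A\cong\wh A^{\,\mathsf{PG}}$.

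Combining these two observations gives the result directly: $A\cong\wh A^{\,\mathsf{PG}}$ is finitely $C^\infty$-generated by Proposition~\ref{envfing}(A), being the envelope of a finitely generated algebra. I expect the main (and only genuinely nontrivial) obstacle to be the verification that the envelope of an algebra already lying in the target class is the algebra itself --- that is, that the adjunction unit $\io_A$ is an isomorphism when $A\in\mathsf{PG}$. This is a formal consequence of the left-adjoint characterization in Definition~\ref{defengen}, but it is the one place where the polynomial-growth hypothesis is actually used, and it deserves an explicit sentence rather than being taken for granted. Everything else --- finite generation of a finite-dimensional algebra, and the appeal to Proposition~\ref{envfing}(A) --- is routine.
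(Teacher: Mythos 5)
Your proposal is correct and follows essentially the same route as the paper: the paper's proof also observes that a finite-dimensional algebra of polynomial growth is a Banach algebra of polynomial growth, hence $\wh A^{\,\mathsf{PG}}=A$, and that finite dimensionality gives finite generation, so Proposition~\ref{envfing} applies. Your only addition is to spell out, via the adjunction/universal property, why the unit $\io_A$ is an isomorphism when $A$ already lies in $\mathsf{PG}\subset\mathsf{PGL}$ --- a step the paper states without comment --- and that elaboration is accurate.
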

\begin{proof}
If $A$ is a finite-dimensional algebra of polynomial growth, then it is a
Banach algebra  of polynomial growth and then $\wh A^{\,\mathsf{PG}}=A$.
Since a finite-dimensional algebra is finitely generated, we can apply Proposition~\ref{envfing}.
\end{proof}

The following two results is related with the discussion on Hopf algebras in \S\,\ref{s:Hopf}.

\begin{pr}\label{tensfing0}
If $A_1,\ldots, A_n$ are finitely $C^\infty$-generated algebras, then
$$
A_1\mathbin{\otimes^{\,\mathsf{PG}}\!}\cdots \mathbin{\otimes^{\,\mathsf{PG}}\!} A_n\cong A_1\ptn\cdots\ptn A_n.
$$
\end{pr}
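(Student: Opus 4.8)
The plan is to reduce the statement to the already-proven Proposition~\ref{prliCitp2}, which asserts exactly this isomorphism under the hypothesis that all factors lie in $\mathsf{QDTS}$. So the entire task amounts to verifying that every finitely $C^\infty$-generated algebra belongs to $\mathsf{QDTS}$; once this is established, the conclusion is immediate.

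First I would check that each $A_i$ is in $\mathsf{QDTS}$. By definition, $A_i$ is isomorphic to a quotient of $\cF_{k_i}^{\,\mathsf{PG}}$ over a closed ideal, say $J_i$, for some $k_i\in\N$. By Theorem~\ref{enUfk}(B), the algebra $\cF_{k_i}^{\,\mathsf{PG}}$ lies in $\mathsf{DTS}$, so $A_i$ is a quotient of a $\mathsf{DTS}$ algebra. The one point requiring care is completeness: Definition~\ref{QPSDTde} takes the \emph{completion} of a quotient of a $\mathsf{DTS}$ algebra, whereas $A_i$ is the quotient itself. Here I would invoke Theorem~\ref{enUfk}(A), according to which $\cF_{k_i}^{\,\mathsf{PG}}$ is a nuclear Fr\'echet space. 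Since the quotient of a Fr\'echet space over a closed subspace is again Fr\'echet, and in particular complete, the quotient $\cF_{k_i}^{\,\mathsf{PG}}/J_i$ coincides with its own completion. Therefore $A_i$ is the completion of a quotient of a $\mathsf{DTS}$ algebra, i.e., $A_i\in\mathsf{QDTS}$.

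With each $A_i\in\mathsf{QDTS}$ in hand, Proposition~\ref{prliCitp2} applies verbatim and yields
$$
A_1\mathbin{\otimes^{\,\mathsf{PG}}\!}\cdots \mathbin{\otimes^{\,\mathsf{PG}}\!} A_n\cong A_1\ptn\cdots\ptn A_n,
$$
completing the argument. I do not expect any genuine obstacle here: the proposition is a direct corollary of the structure theorem for $\cF_{k}^{\,\mathsf{PG}}$ (Theorem~\ref{enUfk}) combined with the stability of $\mathsf{QDTS}$ under $(-)\ptn(-)$ already recorded in Proposition~\ref{prliCitp2}. The only subtlety worth stating explicitly is the completeness of the Fr\'echet quotient, which is what lets one pass freely between ``quotient'' and ``completion of a quotient'' and thereby place $A_i$ inside $\mathsf{QDTS}$.
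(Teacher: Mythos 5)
Your proof is correct and follows essentially the same route as the paper: the paper's own argument notes that each $A_i$, being a quotient of $\cF_{k_i}^{\,\mathsf{PG}}\in\mathsf{DTS}$ (Theorem~\ref{enUfk}), lies in $\mathsf{QDTS}$, and then applies Proposition~\ref{prliCitp2}. Your explicit verification that the Fr\'echet quotient $\cF_{k_i}^{\,\mathsf{PG}}/J_i$ is already complete, and hence coincides with its completion, is a detail the paper leaves implicit; it is correct and welcome, but not a different method.
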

\begin{proof}
Note that every finitely $C^\infty$-generated algebra is in $\mathsf{QDTS}$ since it is a quotient of an algebra of the form $\cF_{k}^{\,\mathsf{PG}}$, which is in $\mathsf{DTS}$ by Theorem~\ref{enUfk}. So we can apply Proposition~\ref{prliCitp2}.
\end{proof}

\begin{thm}\label{tensfing}
If $A_1,\ldots, A_n$ are finitely $C^\infty$-generated algebras, then so is $A_1\ptn\cdots\ptn A_n$.
\end{thm}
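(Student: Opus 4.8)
The plan is to reduce to the case $n=2$ by an evident induction, and then to realize $A_1\ptn A_2$ as a quotient of a single $\cF_{m}^{\,\mathsf{PG}}$. Write $A_i\cong\cF_{k_i}^{\,\mathsf{PG}}/J_i$ with $J_i$ a closed ideal and denote by $q_i\colon\cF_{k_i}^{\,\mathsf{PG}}\to A_i$ the quotient homomorphism. Recall from Theorem~\ref{enUfk}(A) that each $\cF_{k_i}^{\,\mathsf{PG}}$, and hence each $A_i$, is a nuclear Fr\'echet space; consequently $\cF_{k_1}^{\,\mathsf{PG}}\ptn\cF_{k_2}^{\,\mathsf{PG}}$ and $A_1\ptn A_2$ are again Fr\'echet algebras, a fact that will be used repeatedly to invoke the open mapping theorem.

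First I would check that the factor $\cF_{k_1}^{\,\mathsf{PG}}\ptn\cF_{k_2}^{\,\mathsf{PG}}$ is itself finitely $C^\infty$-generated. Since each $\cF_{k_i}^{\,\mathsf{PG}}$ is trivially finitely $C^\infty$-generated, Proposition~\ref{tensfing0} identifies $\cF_{k_1}^{\,\mathsf{PG}}\ptn\cF_{k_2}^{\,\mathsf{PG}}$ with the $C^\infty$-tensor product $\cF_{k_1}^{\,\mathsf{PG}}\mathbin{\otimes^{\,\mathsf{PG}}\!}\cF_{k_2}^{\,\mathsf{PG}}$. By Proposition~\ref{exisCtenpr} the latter is a completed quotient of the coproduct $\cF_{k_1}^{\,\mathsf{PG}}\mathbin{\ast^{\,\mathsf{PG}}\!}\cF_{k_2}^{\,\mathsf{PG}}$, which by Proposition~\ref{copoffr} equals $\cF_{k_1+k_2}^{\,\mathsf{PG}}$. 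As this algebra is Fr\'echet, its quotient over a closed ideal is already complete, so no completion is needed and $\cF_{k_1}^{\,\mathsf{PG}}\ptn\cF_{k_2}^{\,\mathsf{PG}}$ is literally a quotient of $\cF_{k_1+k_2}^{\,\mathsf{PG}}$, hence finitely $C^\infty$-generated.

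Next I would exhibit $A_1\ptn A_2$ as a quotient of $\cF_{k_1}^{\,\mathsf{PG}}\ptn\cF_{k_2}^{\,\mathsf{PG}}$. Consider the map $q_1\ptn q_2\colon\cF_{k_1}^{\,\mathsf{PG}}\ptn\cF_{k_2}^{\,\mathsf{PG}}\to A_1\ptn A_2$, which is a continuous algebra homomorphism because $q_1$ and $q_2$ are. By the permanence of open maps onto dense images under the projective tensor product \cite[Proposition~43.9]{Tr67} --- the same tool used in the proof of Proposition~\ref{QprliCitp} --- the corestriction of $q_1\ptn q_2$ to its image is open and the image is dense. Since both spaces are Fr\'echet, the image is a quotient of a Fr\'echet space, hence complete and therefore closed; being also dense, it is everything, so $q_1\ptn q_2$ is a surjective open homomorphism, i.e.\ a quotient map whose kernel is a closed ideal. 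Composing this quotient homomorphism with the one from the previous paragraph exhibits $A_1\ptn A_2$ as a quotient of $\cF_{k_1+k_2}^{\,\mathsf{PG}}$, which is exactly the defining property of a finitely $C^\infty$-generated algebra.

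The routine parts --- the induction on $n$, the Fr\'echet and nuclearity bookkeeping, and the identification of the relevant kernels as closed ideals --- should cause no difficulty. The one step carrying real weight is the openness and surjectivity of $q_1\ptn q_2$: one must combine the tensor-product permanence statement from \cite{Tr67} with the open mapping theorem, using the Fr\'echet property in an essential way to upgrade ``open onto a dense image'' to ``surjective quotient map.'' I expect this to be the main obstacle, though it is considerably mitigated by the fact that the identical device already appears in the proof of Proposition~\ref{QprliCitp}.
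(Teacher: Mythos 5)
Your proposal is correct and follows essentially the same route as the paper's proof: the same reduction to $n=2$ and the same chain of quotients $\cF_{k_1+k_2}^{\,\mathsf{PG}}\twoheadrightarrow \cF_{k_1}^{\,\mathsf{PG}}\mathbin{\ast^{\mathsf{PG}}}\cF_{k_2}^{\,\mathsf{PG}}$-quotient $\cong \cF_{k_1}^{\,\mathsf{PG}}\mathbin{\otimes^{\,\mathsf{PG}}\!}\cF_{k_2}^{\,\mathsf{PG}}\cong\cF_{k_1}^{\,\mathsf{PG}}\ptn\cF_{k_2}^{\,\mathsf{PG}}\twoheadrightarrow A_1\ptn A_2$, invoking Propositions~\ref{tensfing0}, \ref{exisCtenpr} and~\ref{copoffr} exactly as the paper does. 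The only difference is that you spell out, via \cite[Proposition~43.9]{Tr67} and the open mapping theorem, the step the paper dismisses as ``a standard property of projective tensor products'' (that $q_1\ptn q_2$ is a quotient map of Fr\'echet algebras), which is a legitimate and indeed correct elaboration rather than a different argument.
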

\begin{proof}
It suffices to consider the case when $n=2$. By the definition, $A_1$ and $A_2$ are quotients of $\cF_{k_1}^{\,\mathsf{PG}}$ and $\cF_{k_2}^{\,\mathsf{PG}}$, respectively, for some $k_1,k_2\in\N$.  Since we are dealing with Fr\'echet algebras, a standard property of projective tensor products implies that $A_1\ptn A_2$ is a quotient of $\cF_{k_1}^{\,\mathsf{PG}}\ptn \cF_{k_2}^{\,\mathsf{PG}}$, which is isomorphic to $\cF_{k_1}^{\,\mathsf{PG}}\mathbin{\otimes^{\,\mathsf{PG}}\!}\cF_{k_2}^{\,\mathsf{PG}}$ by Proposition~\ref{tensfing0}. Moreover, it follows from Proposition~\ref{exisCtenpr} that $\cF_{k_1}^{\,\mathsf{PG}}\mathbin{\otimes^{\,\mathsf{PG}}\!}\cF_{k_2}^{\,\mathsf{PG}}$ is a quotient of $\cF_{k_1}^{\,\mathsf{PG}}{\ast^{\mathsf{PG}}}\cF_{k_2}^{\,\mathsf{PG}}$, which in turn is isomorphic to $\cF_{k_1+k_2}^{\,\mathsf{PG}}$ by Proposition~\ref{copoffr}.
Thus $A_1\ptn A_2$ is a quotient of the last algebra and so is finitely $C^\infty$-generated.
\end{proof}

\section{Example of a finitely $C^\infty$-generated algebra}
\label{s:af1}

In this section we construct a finitely $C^\infty$-generated algebra that is not  obtained as an envelope of a finitely generated algebra, but directly as a quotient of a $C^\infty$-free algebra. This example is a deformation of the envelope of $U(\fa\ff_1)$, where $\fa\ff_1$ is the non-abelian solvable $2$-dimensional Lie algebra. The first form of this  deformation was introduced in \cite{AiSa} at the physical level of rigour; for a holomorphic version see \cite{AHHFG}.

Recall that $\fa\ff_1$ is generated by elements $e_1,e_2$ subject to the relation $[e_1,e_2]=e_2$.
Now let $\hbar\in\R\setminus \{0\}$ and denote by $(C^\infty_{\fa\ff_1})_\hbar$ the universal finitely $C^\infty$-generated algebra with generators $e_1,e_2$ subject to the relation
\begin{equation}\label{af1qu}
[e_1,e_2]=\frac{\sinh \hbar e_2}{\sinh \hbar}.
\end{equation}
This means that $(C^\infty_{\fa\ff_1})_\hbar$ is the quotient of $\cF_{2}^{\,\mathsf{PG}}$ over the closed ideal generated by the element $[e_1,e_2]\sinh \hbar-\sinh \hbar e_2$, which is well defined because each element of a $\mathsf{PGL}$ algebra admits a $C^\infty$-functional calculus; see, e.g., \cite[Theorem 3.2]{ArOld}.

Here we describe the structure of $\mathsf{PGL}$ algebra  on $(C^\infty_{\fa\ff_1})_\hbar$. For a Hopf $\ptn$-algebra structure see the next section.

\begin{thm}\label{multCifgqua}
Let $\hbar\in\R\setminus \{0\}$.

\emph{(A)}~The Fr\'echet space $C^\infty(\R)\ptn \R[[e_2]]$ is a $\mathsf{DTS}$ algebra with respect to a multiplication such that~\eqref{af1qu} holds.

\emph{(B)}~With respect to this multiplication, $C^\infty(\R)\ptn \R[[e_2]]$ is a universal finitely $C^\infty$-generated  algebra, i.e., it is topologically isomorphic to $(C^\infty_{\fa\ff_1})_\hbar$.
\end{thm}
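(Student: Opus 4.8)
The plan is to realize $C^\infty(\R)\ptn\R[[e_2]]$ explicitly as a $\mathsf{DTS}$ algebra and then verify its universal property. For Part~(A), I would first note that $\R[[e_2]]$ is the projective limit of the truncated algebras $\R[e_2]/(e_2^p)$, each of which is finite-dimensional and nilpotent modulo scalars, hence of polynomial growth (indeed triangular, so a subalgebra of some $\rT_p$ by Proposition~\ref{PGfd}). The subtlety is that $\R[[e_2]]$ must carry a \emph{noncommutative} multiplication encoding~\eqref{af1qu}: the generator $e_1$ does not commute with $e_2$, so the ``coefficients'' in $C^\infty(\R)$ (functions of $e_1$) and the formal variable $e_2$ interact. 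First I would fix the multiplication rule by declaring that moving $e_2$ past a function $f(e_1)$ produces a shift, modeled on the classical relation for $\fa\ff_1$ where $[e_1,e_2]=e_2$ integrates to $e_2\,f(e_1)=f(e_1+1)\,e_2$ up to the $\hbar$-deformation. Concretely, \eqref{af1qu} says $e_1e_2-e_2e_1=\sinh(\hbar e_2)/\sinh\hbar$, so I would solve for how $e_2^n$ commutes past a polynomial in $e_1$, obtaining a shift-type formula, and then check this extends by continuity to all of $C^\infty(\R)$ since functions of $e_1$ admit $C^\infty$ functional calculus in any $\mathsf{PGL}$ algebra.

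Once the multiplication is written down, associativity is the first thing to confirm; this is a finite computation on the dense subalgebra $\R[e_1]\otimes\R[e_2]$ and then extends by separate continuity. To place the resulting algebra in $\mathsf{DTS}$, I would exhibit it as a closed subalgebra of a product of algebras $C^\infty(K)\ptn(\R[e_2]/(e_2^p))$, or rather realize it directly as a projective limit $\varprojlim_p\,C^\infty(\R)\ptn(\R[e_2]/(e_2^p))$, each factor being of the form $C\ptn\rT_p$ with $C=C^\infty(\R)$ a $C^\infty$-differentiable algebra; the linking maps are the obvious truncations and are algebra homomorphisms for the chosen multiplication. The $\mathsf{DTS}$ membership then follows since a projective limit is a closed subalgebra of the product. (One must check the quotient multiplications on $\R[e_2]/(e_2^p)$ are well-defined, i.e.\ that the ideal $(e_2^p)$ is preserved, which holds because the commutator $\sinh(\hbar e_2)/\sinh\hbar$ lies in $(e_2)$ and raising the commutation relation does not lower $e_2$-degree.)

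For Part~(B), I would establish the universal property. The relation~\eqref{af1qu} holds by construction, so there is a canonical homomorphism $(C^\infty_{\fa\ff_1})_\hbar\to C^\infty(\R)\ptn\R[[e_2]]$ sending the generators to $e_1,e_2$. To invert it, I would use the universal property of $\cF_2^{\,\mathsf{PG}}$: the assignment $x_1\mapsto e_1$, $x_2\mapsto e_2$ gives a homomorphism $\cF_2^{\,\mathsf{PG}}\to C^\infty(\R)\ptn\R[[e_2]]$, and since the defining element $[e_1,e_2]\sinh\hbar-\sinh\hbar\,e_2$ maps to zero, it factors through $(C^\infty_{\fa\ff_1})_\hbar$. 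The two maps are mutually inverse once I show that $C^\infty(\R)\ptn\R[[e_2]]$ is generated topologically by $e_1,e_2$ with \emph{no} relations beyond~\eqref{af1qu}; the cleanest route is a normal-form/PBW argument showing that every element has a unique expansion $\sum_n f_n(e_1)\,e_2^n$ with $f_n\in C^\infty(\R)$, which identifies the underlying space with $C^\infty(\R)\ptn\R[[e_2]]$ and shows the canonical map is a linear, hence algebra, isomorphism.

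The main obstacle I anticipate is Part~(A)'s well-definedness and associativity of the deformed multiplication, specifically controlling how the nonlinear term $\sinh(\hbar e_2)/\sinh\hbar$ propagates when commuting $e_2$ past arbitrary smooth functions of $e_1$. The formal-series variable $e_2$ keeps the analytic difficulties finite at each truncation level $p$, but I must ensure the shift-type action of $e_2$ on $C^\infty(\R)$ is by a \emph{continuous} operator (a genuine $C^\infty$ functional-calculus statement) and that the induced maps are compatible across the tower $\R[e_2]/(e_2^p)$ so that the projective limit is a well-defined Fr\'echet algebra. Establishing the uniqueness of the normal form $\sum_n f_n(e_1)e_2^n$ in Part~(B) --- i.e.\ that the relation~\eqref{af1qu} suffices to reduce every word and that no further collapse occurs --- is the second delicate point, and I expect it to follow from the explicit $\mathsf{DTS}$ model built in Part~(A) rather than from an abstract diagram chase.
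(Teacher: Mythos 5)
Your plan for Part~(A) founders at the decisive step: the claim that each truncation $C^\infty(\R)\ptn\bigl(\R[e_2]/(e_2^p)\bigr)$, equipped with the deformed multiplication, is ``of the form $C\ptn\rT_p$''. In Definition~\ref{PSDTde} the algebra $C\ptn\rT_p$ carries the \emph{tensor-product} multiplication, so the images of $C$ and $\rT_p$ commute elementwise; in your truncations $e_1$ and $e_2$ do not commute, so they have the required form only as locally convex spaces, not as algebras. Hence $\mathsf{DTS}$ membership does not follow from your projective-limit description, and this is not a repairable technicality but the actual content of the theorem: one must embed the deformed algebra into genuine tensor-product algebras. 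The paper does this by constructing, for each $p$, matrices $X_p,Z_p\in\rT_{p+1}$ with $Z_p=E_p+\al_2E_p^2+\cdots+\al_pE_p^p$, where the coefficients $\al_j$ are chosen (using $[X_p,E_p^j]=jE_p^j$) so that the deformed relation~\eqref{af1qu} holds --- in effect a formal straightening of the vector field $\sinh(\hbar t)\,\partial_t/\sinh\hbar$ to the Euler field --- then adding a shift parameter $\la$ to obtain maps $\wt\pi_p$ into $\rT_{p+1}(C^\infty(\R))\cong C^\infty(\R)\ptn\rT_{p+1}$, and finally proving that the induced map $\rho$ into $\prod_p\rT_{p+1}(C^\infty(\R))$ is \emph{topologically} injective (analysis of the upper-right matrix entries, closed range, inverse mapping theorem). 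The multiplication on $C^\infty(\R)\ptn\R[[e_2]]$ is then transported from the closed image subalgebra, so associativity and $\mathsf{DTS}$ membership come for free; your route needs both to be proved and supplies no mechanism for the latter. The same straightening is hidden in your ``shift-type formula'': since the commutator in~\eqref{af1qu} is a function of $e_2$ rather than of $e_1$, normally ordering $e_2$ past $f(e_1)$ by the derivation expansion produces the full Taylor series of $f$, which diverges for general $f\in C^\infty(\R)$; the coefficients are genuine (finite combinations of) shifts only after the substitution $e_2=\zeta(u)$ whose coefficients are exactly the $\al_j$ above.

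In Part~(B) there is a second gap: the two homomorphisms you describe are literally the same map $(C^\infty_{\fa\ff_1})_\hbar\to C^\infty(\R)\ptn\R[[e_2]]$ (the factorization through the quotient \emph{is} the canonical map), so no inverse has been produced. The missing ingredient is the paper's Lemma~\ref{af1dext}, resting on Theorem~\ref{fucani}: for any $\mathsf{PGL}$ algebra $B$ and elements $x,z$ satisfying~\eqref{af1qu}, the assignment $\sum_n f_n\otimes e_2^n\mapsto\sum_n f_n(x)z^n$ extends to a continuous linear map on all of $C^\infty(\R)\ptn\R[[e_2]]$; applied to $B=(C^\infty_{\fa\ff_1})_\hbar$ this yields the inverse. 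Its proof has real content --- in every Banach-algebra-of-polynomial-growth quotient the relation forces $\sinh\hbar z$ into the radical, hence it is nilpotent, and then $z=\arcsinh(\sinh\hbar z)/\hbar$ is itself nilpotent, which is what makes the series a finite sum there. Your normal-form argument gives uniqueness of expansions inside the explicit model, but the isomorphism requires \emph{existence} of such expansions in the abstract quotient, i.e. convergence and continuity of $\sum_n f_n(e_1)e_2^n$ there, and nothing in the model supplies that.
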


Thus the underlying Fr\'echet space of $(C^\infty_{\fa\ff_1})_\hbar$ is the same as in the undeformed case of~$C^\infty_{\fa\ff_1}$; see \cite[Example 4.7]{ArOld}. But, of course, the multiplication is different.

We need auxiliary results. The following theorem is a strengthening of a well-known result on ordered $C^\infty$-functional calculus; see a discussion in \cite{ArOld}.

\begin{thm}
\label{fucani}
\cite[Theorem~3.3]{ArOld}
Let $b_1, \dots, b_m$ be elements of an Arens--Michael $\mathbb{R}$-algebra~$B$. Suppose that $b_1,\dots,b_k$ \emph{(}$k\le m$\emph{)} are of polynomial growth and $b_{k+1},\dots,b_m$ are nilpotent. Then the linear map $\mathbb{R}[\lambda_1,\dots,\lambda_m]\to B$ taking the (commutative) monomial $\lambda_1^{\beta_1}\cdots \lambda_m^{\beta_m}$ to the (non-commutative) monomial $b_1^{\beta_1}\cdots b_m^{\beta_m}$ extends to a continuous linear map
$$
C^\infty(\mathbb{R}^k)\mathbin{\widehat{\otimes}} \mathbb{R}[[\lambda_{k+1},\dots,\lambda_m]]\to B.
$$
\end{thm}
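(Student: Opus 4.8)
The plan is to build the map in three pieces---one for the polynomial-growth generators $b_1,\dots,b_k$, one for the nilpotent generators $b_{k+1},\dots,b_m$---and then glue them using the ordered multiplication of $B$. First I would reduce to the case of a Banach algebra. Writing $B=\varprojlim_\nu B_\nu$ as a projective limit of Banach algebras with structure homomorphisms $\tau_\nu\colon B\to B_\nu$, a continuous linear map into $B$ is the same as a compatible family of continuous linear maps into the $B_\nu$; the images $\tau_\nu(b_i)$ retain polynomial growth (resp.\ nilpotency), and the constructions below are natural in $B$ (a continuous homomorphism commutes with the Fourier integral, with the ordered multiplication and with the power-series substitution), hence commute with the linking homomorphisms and assemble. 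So from now on $B$ is a Banach algebra.

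For the polynomial-growth block I would invoke the single-variable $C^\infty$-functional calculus of a polynomial-growth element, \cite[Theorem~3.2]{ArOld}, which supplies for each $j\le k$ a continuous linear map $F_j\colon C^\infty(\R)\to B$ sending $\lambda^n$ to $b_j^n$ (concretely $F_j(f)=\tfrac1{2\pi}\int\widehat f(s)\,e^{isb_j}\,ds$ after a cutoff making the spectrum argument work, the integral converging because $\|e^{isb_j}\|\le K_j(1+|s|)^{\alpha_j}$ while $\widehat f$ is rapidly decreasing). Using the canonical topological isomorphism $C^\infty(\R^k)\cong C^\infty(\R)\ptn\cdots\ptn C^\infty(\R)$ ($k$ factors; valid because these are nuclear Fr\'echet spaces), I would define the ordered multivariable calculus as the composite
\[
C^\infty(\R^k)\;\cong\;C^\infty(\R)\ptn\cdots\ptn C^\infty(\R)\xrightarrow{\,F_1\ptn\cdots\ptn F_k\,}B\ptn\cdots\ptn B\xrightarrow{\;m\;}B,
\]
where $m$ is the jointly continuous $k$-fold multiplication $x_1\otimes\cdots\otimes x_k\mapsto x_1\cdots x_k$, extended to $B\ptn\cdots\ptn B$. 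On the dense subspace of functions $f_1\otimes\cdots\otimes f_k$ (that is, $f(\lambda)=f_1(\lambda_1)\cdots f_k(\lambda_k)$) this composite equals $F_1(f_1)\cdots F_k(f_k)$; in particular the monomial $\lambda_1^{\beta_1}\cdots\lambda_k^{\beta_k}$, being such a product, is sent to the \emph{ordered} monomial $b_1^{\beta_1}\cdots b_k^{\beta_k}$.

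For the nilpotent block, the map $\R[[\lambda_{k+1},\dots,\lambda_m]]\to B$ sending $\lambda_{k+1}^{\gamma_{k+1}}\cdots\lambda_m^{\gamma_m}$ to $b_{k+1}^{\gamma_{k+1}}\cdots b_m^{\gamma_m}$ is unproblematic: if $b_j^{N_j}=0$ for $j>k$, then any monomial carrying a factor $b_j^{\gamma_j}$ with $\gamma_j\ge N_j$ vanishes, so a power series is carried to a \emph{finite} sum. This map is therefore well defined and continuous, factoring through the finite-dimensional quotient of $\R[[\lambda_{k+1},\dots,\lambda_m]]$ by the ideal of monomials annihilated by the substitution. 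Finally I would form the jointly continuous bilinear map $C^\infty(\R^k)\times\R[[\lambda_{k+1},\dots,\lambda_m]]\to B$ sending $(f,g)$ to (calculus of $f$)$\cdot$(substitution of $g$), with the polynomial-growth block placed first so as to match the order in $b_1^{\beta_1}\cdots b_m^{\beta_m}$, and invoke the universal property of $\ptn$ to obtain a continuous linear map $C^\infty(\R^k)\ptn\R[[\lambda_{k+1},\dots,\lambda_m]]\to B$. Tracing a basis tensor $\lambda_1^{\beta_1}\cdots\lambda_k^{\beta_k}\otimes\lambda_{k+1}^{\beta_{k+1}}\cdots\lambda_m^{\beta_m}$ through all steps shows it extends the prescribed polynomial-to-monomial map.

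The main obstacle is the non-commutativity: since the $b_j$ need not commute there is no joint spectrum and no genuine multivariable functional calculus, so the essential point is that the right object is the \emph{ordered} calculus. The device that makes the monomial identity verifiable despite this is the factorization through $C^\infty(\R)\ptn\cdots\ptn C^\infty(\R)$, which reduces everything to the single-variable calculus of one polynomial-growth element and records the prescribed ordering solely in the multiplication $m$. Granting \cite[Theorem~3.2]{ArOld}, what remains is the careful but routine bookkeeping: continuity of $m$ on the projective tensor product and the identification $C^\infty(\R^k)\cong C^\infty(\R)\ptn\cdots\ptn C^\infty(\R)$. If instead one builds the single-variable calculus from scratch, the analytic heart is the Fourier-transform estimate and the cutoff-independence for a polynomial-growth element.
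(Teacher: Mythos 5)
The paper contains no proof of this statement to compare against: Theorem~\ref{fucani} is imported verbatim from \cite[Theorem~3.3]{ArOld} and used as a black box (e.g., in Lemma~\ref{af1dext} and Appendix~\ref{sec:efa}). On its own merits your argument is correct once one grants the single-variable calculus \cite[Theorem~3.2]{ArOld}, which you explicitly do: the reduction to a Banach algebra is legitimate (and the compatibility of the family is in fact automatic, since polynomials are dense in $C^\infty(\R^k)\ptn\R[[\lambda_{k+1},\dots,\lambda_m]]$ and continuous extensions are unique), the factorization of the ordered calculus as $C^\infty(\R^k)\cong C^\infty(\R)\ptn\cdots\ptn C^\infty(\R)\to B\ptn\cdots\ptn B\xrightarrow{\;m\;}B$ is sound, the nilpotent block does factor through a finite-dimensional quotient, and this is essentially the route one would expect and, as far as comparison with \cite{ArOld} is possible, the one effectively taken there.
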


Now we use this theorem in our situation.

\begin{lm}\label{af1dext}
If $x$ and $z$ are elements of a $\mathsf{PGL}$ algebra $B$ such that $[x,z]=\sinh \hbar z/\sinh \hbar$,
then the linear map $\R[e_1,e_2]\to B\!:e_1\mapsto x,\,e_2\mapsto z$  extends to a continuous linear map defined on $C^\infty(\R)\ptn \R[[e_2]]$.
\end{lm}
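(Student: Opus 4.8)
The plan is to reduce the problem to the Banach quotients of $B$ and then invoke the ordered $C^\infty$-functional calculus of Theorem~\ref{fucani}. Write $B$ as a projective limit of a system $(A_\nu)$ of Banach algebras of polynomial growth, let $\tau_\nu\colon B\to A_\nu$ be the canonical homomorphisms, and put $x_\nu:=\tau_\nu(x)$, $z_\nu:=\tau_\nu(z)$. Since each $\tau_\nu$ is a continuous homomorphism and the $C^\infty$-functional calculus is natural with respect to such homomorphisms, the defining relation descends to $[x_\nu,z_\nu]=\sinh(\hbar z_\nu)/\sinh\hbar$ in $A_\nu$. As $A_\nu$ lies in $\mathsf{PG}$, every element of $A_\nu$ is of polynomial growth; in particular so is $x_\nu$.

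The crux is to show that each $z_\nu$ is \emph{nilpotent}. By \cite[Theorem~2.8]{ArOld} the algebra $A_\nu$ is commutative modulo its radical, so $[x_\nu,z_\nu]\in\Rad A_\nu$ and hence $\sinh(\hbar z_\nu)/\sinh\hbar\in\Rad A_\nu$. Passing to the semisimple quotient $\bar A_\nu:=A_\nu/\Rad A_\nu$, which is again of polynomial growth by \cite[Proposition~2.3]{ArOld} and hence strictly real by \cite[Proposition~2.5]{ArOld}, we obtain $\sinh(\hbar\bar z_\nu)=0$. For any character $\chi$ of $\bar A_\nu$ one has $\chi(\bar z_\nu)\in\sigma(\bar z_\nu)\subset\R$ and $\sinh\!\big(\hbar\,\chi(\bar z_\nu)\big)=\chi\big(\sinh(\hbar\bar z_\nu)\big)=0$; since $\sinh$ has no nonzero real root and $\hbar\neq0$, this forces $\chi(\bar z_\nu)=0$. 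As $\bar A_\nu$ is semisimple and commutative, its characters separate points, so $\bar z_\nu=0$, i.e. $z_\nu\in\Rad A_\nu$. Finally $\Rad A_\nu$ is nilpotent by \cite[Proposition~2.9]{ArOld}, whence $z_\nu$ is nilpotent.

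With $x_\nu$ of polynomial growth and $z_\nu$ nilpotent, Theorem~\ref{fucani} (applied with $m=2$, $k=1$) provides for each $\nu$ a continuous linear map $\Phi_\nu\colon C^\infty(\R)\ptn\R[[e_2]]\to A_\nu$ sending $e_1^{\beta_1}e_2^{\beta_2}\mapsto x_\nu^{\beta_1}z_\nu^{\beta_2}$. The monomials $e_1^{\beta_1}e_2^{\beta_2}$ span a dense subspace of $C^\infty(\R)\ptn\R[[e_2]]$, since polynomials are dense in $C^\infty(\R)$ in its Fréchet topology and in $\R[[e_2]]$ in the product topology; hence each $\Phi_\nu$ is the unique continuous extension of the monomial map, and consequently the $\Phi_\nu$ are compatible with the linking homomorphisms $A_{\nu'}\to A_\nu$. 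By the universal property of the projective limit they assemble into a continuous linear map $\Phi\colon C^\infty(\R)\ptn\R[[e_2]]\to B$ with $\tau_\nu\circ\Phi=\Phi_\nu$, and $\Phi$ restricts to $e_1\mapsto x$, $e_2\mapsto z$ on $\R[e_1,e_2]$, as required.

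I expect the main obstacle to be the nilpotency of $z_\nu$. Theorem~\ref{fucani} demands genuine, not merely topological, nilpotency, whereas in the full algebra $B$ the element $z$ need only lie in the radical, and the radical of a projective limit need not be nilpotent. This is precisely why the argument must be run in each Banach quotient $A_\nu$, where the radical \emph{is} nilpotent, rather than applied to $B$ directly; the spectral/character argument of the second paragraph is the heart of the matter, while the naturality of the functional calculus, the density of the monomials, and the reassembly over the projective system are routine.
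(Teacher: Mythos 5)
Your proof is correct, and its skeleton --- reduce to the Banach quotients $A_\nu$ of polynomial growth, check that $x_\nu$ has polynomial growth and $z_\nu$ is nilpotent, invoke Theorem~\ref{fucani} in each $A_\nu$, and reassemble over the projective system --- is the same as the paper's; the paper compresses the reduction and reassembly into the single sentence ``we can assume that $B$ is a Banach algebra of polynomial growth'', whereas you spell it out (correctly, via density of $\R[e_1,e_2]$ in $C^\infty(\R)\ptn\R[[e_2]]$ and the universal property of the projective limit), which is if anything a gain in rigor. Where you genuinely diverge is at the crux, the nilpotency of $z_\nu$. The paper argues inside $A_\nu$ itself: $\sinh\hbar z_\nu$ lies in the commutant, hence in $\Rad A_\nu$, hence is nilpotent; then the composition property of the holomorphic functional calculus (cited from Allan--Dales) gives $z_\nu=(\arcsinh(\sinh\hbar z_\nu))/\hbar$, and since $\arcsinh$ applied to a nilpotent element is a polynomial in it with zero constant term, $z_\nu$ is nilpotent. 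You instead pass to the semisimple commutative quotient $A_\nu/\Rad A_\nu$, use strict reality and the fact that $\sinh$ has no nonzero real zeros to see that every character annihilates $\bar z_\nu$, conclude $z_\nu\in\Rad A_\nu$, and finish with nilpotency of the radical. Both routes rest on the same structural inputs (commutativity modulo the radical, nilpotency of the radical, real spectra of polynomial-growth elements); yours trades the functional-calculus composition theorem for real Gelfand theory (characters of a commutative real Banach algebra via the real Gelfand--Mazur theorem, which separate points when the algebra is semisimple) --- a fair exchange, arguably more elementary, at the cost of a slightly longer argument.
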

\begin{proof}
We can assume that $B$ is a Banach algebra  of polynomial growth. The relation in the hypothesis implies that $\sinh \hbar z$ belongs to the commutant and therefore belongs to $\Rad B$ by Proposition~\ref{PGLcmR}. On the other hand, $\Rad B$ is nilpotent by \cite[Theorem~2.9]{ArOld}. Thus $\sinh \hbar z$ is nilpotent.

We claim that $z$ is also nilpotent. Indeed, it follows from the composition property of holomorphic functional calculus \cite[p.\,218, Theorem 4.95]{AlDa} that $a=\arcsinh (\sinh a)$ when $a$ is an element of a Banach algebra such that the spectrum of $\sinh a$ is sufficiently small. In particular, this holds when $\sinh a$ is nilpotent. Hence $z=(\arcsinh y)/\hbar$, where $y=\sinh \hbar z$. Since $y$ is nilpotent, $z$ is a polynomial in $y$. Moreover, the leading coefficient of this polynomial is~$0$ and thus $z$ is also nilpotent. The claim is proved.

Since $x$ is of polynomial growth and $z$ is nilpotent, it follows from Theorem~\ref{fucani} that there is a continuous linear map $C^\infty(\R)\ptn \R[[e_2]]\to B$ taking $e_1$ and $e_2$ to $x$ and $z$, respectively.
\end{proof}

\begin{proof}[Proof of Theorem~\ref{multCifgqua}]
(A)~We construct a sequence  $(\pi_p)_{p\in\N}$ of triangular representations of~\eqref{af1qu}. Let
\begin{equation*}
X_p:= \begin{pmatrix}
p& &&& \\
 &p-1 && &\\
 & & \ddots&&\\
 & & &1&\\
 & & &&0
\end{pmatrix},\qquad
E_p:= \begin{pmatrix}
0& 1&&& \\
&0& 1&& \\
 && \ddots& \ddots&\\
 & &&0 &1\\
 & &&&0
\end{pmatrix}
\end{equation*}
be matrices in $\rT_{p+1}$.

To find matrices satisfying~\eqref{af1qu} assume that
\begin{equation}\label{XPsinh}
 [X_p,Z_p]=\frac{\sinh \hbar Z_p}{\hbar},
\end{equation}
where
$Z_p=E_p+\al_2 E_p^2+\cdots+\al_p E_p^p$
for some  $\al_2,\ldots,\al_p\in\R$. Since $[X_p,E_p^j]=kE_p^j$ for every $j$, the matrix $Z_p$ is a polynomial in~$E_p$ and so is $\sinh \hbar Z_p$. Then the condition~\eqref{XPsinh} gives a system of algebraic equations with $\al_2,\ldots,\al_p$ as unknowns. Note that $\al_j=0$ for even~$j$ since Taylor's expression for the hyperbolic sine contains only odd degrees. Next we find $\al_j$ consequently for odd~$j$. It is not hard to check that $\al_j\ne 0$ in this case and that they are independent of~$p$.

Now put $\ka\!:=\hbar/\sinh \hbar$ and define $\pi_p(e_1)=\ka X_p$ and $\pi_p(e_2)=Z_p$. Since~\eqref{XPsinh} holds, we have that $[X_p,Z_p]=\sinh \hbar Z_p/\sinh \hbar$. Thus $\pi_p$ is the desired representation.

For small values of $p$ we get
$$
 \pi_0(e_1)=0, \qquad\pi_0(e_2)=0;
$$
$$
\pi_1(e_1)=\ka
\begin{pmatrix}
 1& 0\\
   & 0
\end{pmatrix},
\qquad
\pi_1(e_2)=\begin{pmatrix}
 0 &1\\
    &0
\end{pmatrix};
$$
$$
\pi_2(e_1)=\ka\begin{pmatrix}
 2 &0 &0\\
  & 1& 0\\
   & & 0
\end{pmatrix},
\qquad
\pi_2(e_2)=\begin{pmatrix}
 0 & 1&0\\
 & 0 &1\\
   &  &0
\end{pmatrix};
$$
$$
\pi_3(e_1)=\ka
\begin{pmatrix}
3&0 &0&0 \\
 &2 &0 &0\\
 & &1 &0\\
   & & &0
\end{pmatrix},
\qquad
\pi_3(e_2)=\begin{pmatrix}
0&1 &0&\frac{\hbar^2}{12}\\
&0 & 1&0\\
 & & 0 &1\\
   & & &0
\end{pmatrix}.
$$
Thus for $p\le 2$ the representation $\pi_p$ has the same form (up to scalar) as in the undeformed case in \cite[Example~4.7]{ArOld} but it is different for $p>2$.

For every $\la\in\R$ the formulas $\pi_{p,\la}(e_1)=\ka X_p+\la$ and $\pi_{p,\la}(e_2)=Z_p$ also define elements of $\rT_{p+1}$ satisfying~\eqref{af1qu}. Treating $\la$ as a variable, we get elements of $\rT_{p+1}(C^\infty(\R))$, which we denote by $\wt\pi_p(e_1)$ and $\wt\pi_p(e_2)$. By Lemma~\ref{af1dext}, there is a continuous linear map from $\wt\pi_p\!:C^\infty(\R)\ptn \R[[e_2]]\to \rT_{p+1}(C^\infty(\R))$ determined by these elements.

We identify $C^\infty(\R^\times)\ptn \R[[e_2]]$ with $C^\infty(\R)^{\Z_+}$ and take the map
$$
\rho\!:C^\infty(\R)^{\Z_+}\,\rightarrow\, \prod_p \rT_p(C^\infty(\R))
$$
induced by the sequence  $(\wt\pi_p)$. To complete the proof of Part~(A)  it suffices to show that $\rho$ is topologically injective.

If $a=(f_j)\in C^\infty(\R)^{\Z_+}$, then $\wt\pi_0(a)=f_0(\la)$;
$$
\wt\pi_1(a)=\begin{pmatrix}
f_0(\la+\ka)&f_1(\la+\ka)  \\
 & f_0(\la)
\end{pmatrix};
\qquad
\wt\pi_2(a)=\begin{pmatrix}
f_0(\la+2\ka)&f_1(\la+2\ka) &f_2(\la+2\ka) \\
 &f_0(\la+\ka) & f_1(\la+\ka)\\
 & & f_0(\la)
\end{pmatrix};
$$
$$
\wt\pi_3(a)=\begin{pmatrix}
f_0(\la+3\ka)&f_1(\la+3\ka) &f_2(\la+3\ka)&\frac{\hbar^2}{12}\,f_1(\la+3\ka)+f_3(\la+3\ka) \\
 &f_0(\la+2\ka) & f_1(\la+2\ka)&f_2(\la+2\ka)\\
 & & f_0(\la+\ka) &f_1(\la+\ka)\\
   & & &f_0(\la)
\end{pmatrix}
$$
etc. In general, the upper right entry in $\wt\pi_p(a)$ is a linear combination of shifts of $f_0,\ldots,f_p$. Denote by $\rho'$ the continuous linear map $C^\infty(\R)^{\Z_+}\to C^\infty(\R)^{\Z_+}$, where the product of the upper right entries of $\wt\pi_p$ is taken on the right-hand side. It suffices to show that $\rho'$ is topologically injective; see, e.g., Part~(B) of \cite[Lemma 4.4]{ArNew}.

Write $\rho'$ as the composition
$$
C^\infty(\R)^{\Z_+}\,\xrightarrow{\tau}\, \prod_{p\in\Z_+} C^\infty(\R)^{p+1}\, \xrightarrow{\rho''}\, C^\infty(\R)^{\Z_+},
$$
where $\tau$ takes a sequence to the set of truncations and $\rho''$ is the product of the maps $\rho''_p\!:C^\infty(\R)^{p+1}\to C^\infty(\R)$ taking $(f_0,\ldots, f_p)$ to the upper right entry of $\wt\pi_p(\sum f_j(e_1)e_2^j)$. Since $\tau$ is topologically injective, it suffices to show that $\rho''$ is topologically injective.

Note that $\rho''_p$ is surjective. In particular, the range of $\rho''_p$ is closed for every~$p$ and hence  the range of $\rho''$ is closed; see, e.g.,  Part~(A) of \cite[Lemma 4.4]{ArNew}.

We now claim that $\rho''$ is injective. Indeed, let $\rho''_p(f_p)=0$ for every~$p$. Then evidently $f_0=0$. Assume that $f_j=0$ when $j<p-1$. It is easy to see that $\rho''_p(f_p)$ is a linear combination of shifts of $f_0,\ldots,f_p$ with the $p$th coefficient equal to~$1$. Therefore $f_p=0$. Thus we have by induction that all $f_p$ are equal to~$0$ and so $\rho''$ is injective. It follows from the inverse mapping theorem for Fr\'echet space that, being injective with closed range, $\rho''$ is topologically  injective. The proof of  Part~(A) is complete.

Part~(B) follows from Part~(A) and Lemma~\ref{af1dext}.
\end{proof}

\section{Hopf algebras}
\label{s:Hopf}

General considerations leading to the following definition are given in Introduction.

\begin{df}
A real Hopf $\mathbin{\widehat{\otimes}}$-algebra that is a finitely $C^\infty$-generated algebra is called a \emph{$C^\infty$-finitely generated Hopf algebra}.
\end{df}

\subsection*{Enveloping functor}

The following proposition provides a sufficient condition for the envelope with respect to $\mathsf{PG}$ to preserve the Hopf $\ptn$-algebra structure.

\begin{pr}\label{PGeHopf}
Let $H$ be a real Hopf $\ptn$-algebra. If the natural homomorphism
$$
\wh H^{\,\mathsf{PG}}\ptn \wh H^{\,\mathsf{PG}}\to \wh H^{\,\mathsf{PG}}\mathbin{\otimes^{\,\mathsf{PG}}\!}\wh H^{\,\mathsf{PG}}
$$
is a topological isomorphism, then there is a unique real Hopf $\ptn$-algebra structure on $\wh H^{\,\mathsf{PG}}$ such that $H\to \wh H^{\,\mathsf{PG}}$ becomes a homomorphism of Hopf $\ptn$-algebras.
\end{pr}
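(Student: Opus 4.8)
The plan is to transport each structure map of $H$ along the canonical homomorphism $\iota\colon H\to\wh H^{\,\mathsf{PG}}$ using the universal property of the envelope (Definition~\ref{defengen}), and then to verify the Hopf axioms. The hypothesis enters at the very first step: the $C^\infty$-tensor product $\wh H^{\,\mathsf{PG}}\mathbin{\otimes^{\,\mathsf{PG}}\!}\wh H^{\,\mathsf{PG}}$ is a $\mathsf{PGL}$ algebra (see Proposition~\ref{exisCtenpr}), and since the natural map $\wh H^{\,\mathsf{PG}}\ptn\wh H^{\,\mathsf{PG}}\to \wh H^{\,\mathsf{PG}}\mathbin{\otimes^{\,\mathsf{PG}}\!}\wh H^{\,\mathsf{PG}}$ is assumed to be an isomorphism, the projective tensor square $\wh H^{\,\mathsf{PG}}\ptn\wh H^{\,\mathsf{PG}}$ itself lies in $\mathsf{PGL}$. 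This is exactly what is needed to extend the comultiplication.

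I would construct the three structure maps as follows. For the comultiplication, consider the continuous homomorphism $(\iota\ptn\iota)\circ\De\colon H\to \wh H^{\,\mathsf{PG}}\ptn\wh H^{\,\mathsf{PG}}$; since the target is in $\mathsf{PGL}$, it extends uniquely to a continuous homomorphism $\wh\De\colon \wh H^{\,\mathsf{PG}}\to\wh H^{\,\mathsf{PG}}\ptn\wh H^{\,\mathsf{PG}}$ with $\wh\De\circ\iota=(\iota\ptn\iota)\circ\De$. For the counit, the target $\R$ is in $\mathsf{PGL}$, so $\varepsilon$ extends uniquely to $\wh\varepsilon\colon\wh H^{\,\mathsf{PG}}\to\R$. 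For the antipode I would first record that the enveloping functor intertwines the opposite-algebra involution: the class $\mathsf{PG}$ is stable under $(-)^{\op}$ (the element $e^{\,isb}$ and its norm are unchanged in $B^{\op}$), hence so is $\mathsf{PGL}$, and uniqueness of left adjoints yields a natural isomorphism $\widehat{(H^{\op})}^{\,\mathsf{PG}}\cong(\wh H^{\,\mathsf{PG}})^{\op}$ compatible with the canonical maps. Regarding $S$ as a homomorphism $H\to H^{\op}$ and extending, I obtain a continuous antihomomorphism $\wh S\colon\wh H^{\,\mathsf{PG}}\to\wh H^{\,\mathsf{PG}}$ with $\wh S\circ\iota=\iota\circ S$. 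The multiplication and unit of the algebra $\wh H^{\,\mathsf{PG}}$ give $\wh m$ and $\wh u$, and by construction $\iota$ intertwines each of $m,u,\De,\varepsilon,S$ with its hatted counterpart.

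It then remains to verify the Hopf axioms for $(\wh H^{\,\mathsf{PG}},\wh m,\wh u,\wh\De,\wh\varepsilon,\wh S)$. Each axiom is an equality of two continuous maps out of $\wh H^{\,\mathsf{PG}}$ into a complete Hausdorff space, and in every case precomposition with $\iota$ reduces it to the corresponding axiom already valid on $H$ (using that $\iota$ intertwines the structure maps). When the target is a $\mathsf{PGL}$ algebra --- as for the counit identities and for the two antipode identities, whose target is $\wh H^{\,\mathsf{PG}}$ --- equality follows from the uniqueness clause of the universal property. Coassociativity, however, is an identity of maps into $\wh H^{\,\mathsf{PG}}\ptn\wh H^{\,\mathsf{PG}}\ptn\wh H^{\,\mathsf{PG}}$, which is not known to be a $\mathsf{PGL}$ algebra; here I would instead use that $\iota(H)$ is dense in $\wh H^{\,\mathsf{PG}}$ (immediate from the construction of the envelope, cf.\ Proposition~\ref{exisenvgen}), so that two continuous maps agreeing on $\iota(H)$ coincide. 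The same reasoning gives uniqueness of the structure: any Hopf $\ptn$-structure making $\iota$ a morphism has comultiplication, counit and antipode agreeing, after precomposition with $\iota$, with $(\iota\ptn\iota)\circ\De$, $\varepsilon$ and $\iota\circ S$, hence equal to $\wh\De,\wh\varepsilon,\wh S$.

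The main obstacle is precisely the failure of $\mathsf{PGL}$ to be stable under $\ptn$ in general (Remark~\ref{PSDTvsPGL}): the double tensor product is brought into $\mathsf{PGL}$ only by the standing hypothesis, which is what makes $\wh\De$ definable, while the triple tensor product stays outside the reach of the universal property and forces the density argument for coassociativity. A secondary technical point to be handled with care is the compatibility of the enveloping functor with $(-)^{\op}$ needed to produce the antipode.
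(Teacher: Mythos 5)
Your proposal is correct and follows essentially the same route as the paper, whose proof consists in adapting Pirkovskii's argument for the Arens--Michael envelope (\cite[Proposition 6.7]{Pir_stbflat}): the hypothesis makes $\wh H^{\,\mathsf{PG}}\ptn\wh H^{\,\mathsf{PG}}$ a $\mathsf{PGL}$ algebra, so the comultiplication (and, via op-compatibility of the envelope, the antipode) extends by the universal property, with the axioms then checked by density/uniqueness. The points you isolate --- that the extension of $\De$ is exactly where the hypothesis enters, and that the remaining verifications reduce to $H$ --- are precisely what the paper's proof highlights as the essential modifications of Pirkovskii's argument.
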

\begin{proof}
Note that $\wh H^{\,\mathsf{PG}}\mathbin{\otimes^{\,\mathsf{PG}}\!}\wh H^{\,\mathsf{PG}}={(\wh H^{\,\mathsf{PG}}\ptn \wh H^{\,\mathsf{PG}})\sphat}{\,\,}^{\,\mathsf{PG}}$; cf. Remarks~\ref{exdeCitp}. So we can use the same argument as in \cite[Proposition 6.7]{Pir_stbflat} applying this formula instead of \cite[Proposition 6.4]{Pir_stbflat}. (It is essential here that the comutiplication $\wh H^{\,\mathsf{PG}}\to\wh H^{\,\mathsf{PG}}\ptn \wh H^{\,\mathsf{PG}}$ is obtained by an application of the universal property since the latter algebra is in $\mathsf{PGL}$.)
\end{proof}

The following result shows that under a finiteness condition the envelope is in fact a functor between categories of Hopf algebras.

\begin{thm}\label{PGenvH}
Let $H$ be a real Hopf algebra that is affine (i.e., finitely generated as an associative algebra).

\emph{(A)}~Then there is a unique real Hopf $\ptn$-algebra structure on $\wh H^{\,\mathsf{PG}}$ such that $H\to \wh H^{\,\mathsf{PG}}$ becomes a homomorphism of Hopf $\ptn$-algebras.

\emph{(B)}~The correspondence $H\mapsto \wh H^{\,\mathsf{PG}}$ extends to a functor from the category of affine real Hopf algebras to the category of $C^\infty$-finitely generated Hopf algebras.
\end{thm}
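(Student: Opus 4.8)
The plan is to deduce both parts from the machinery already assembled, the decisive input being the tensor-product identification of Proposition~\ref{tensfing0}. First I would realise the algebraic Hopf algebra $H$ as a Hopf $\ptn$-algebra by equipping it with its strongest locally convex topology. Every linear map out of a space carrying the strongest locally convex topology is continuous, so $\De_H$, $\varepsilon_H$ and $S_H$ are automatically continuous; moreover, because $H$ is finitely generated, a grading estimate shows that the multiplication is jointly continuous, so that $m_H$ factors through $H\ptn H$ and $H$ is a $\ptn$-algebra. As all the Hopf axioms already hold algebraically and every structure map is continuous, $H$ becomes a real Hopf $\ptn$-algebra. With this topology the continuous homomorphisms out of $H$ are exactly the algebra homomorphisms, so $\wh H^{\,\mathsf{PG}}$ coincides with the envelope considered in Proposition~\ref{envfing}.

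For Part~(A) I would then verify the hypothesis of Proposition~\ref{PGeHopf}. Since $H$ is affine, Proposition~\ref{envfing}(A) shows that $\wh H^{\,\mathsf{PG}}$ is finitely $C^\infty$-generated; applying Proposition~\ref{tensfing0} to the pair $\wh H^{\,\mathsf{PG}},\wh H^{\,\mathsf{PG}}$ yields that the canonical homomorphism $\wh H^{\,\mathsf{PG}}\ptn \wh H^{\,\mathsf{PG}}\to \wh H^{\,\mathsf{PG}}\mathbin{\otimes^{\,\mathsf{PG}}\!}\wh H^{\,\mathsf{PG}}$ is a topological isomorphism. Proposition~\ref{PGeHopf} then delivers at once the unique Hopf $\ptn$-algebra structure on $\wh H^{\,\mathsf{PG}}$ for which $\io_H\!:H\to\wh H^{\,\mathsf{PG}}$ is a morphism of Hopf $\ptn$-algebras, and by construction $\wh H^{\,\mathsf{PG}}$ is a $C^\infty$-finitely generated Hopf algebra.

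For Part~(B), recall that the enveloping functor with respect to $\mathsf{PG}$ is a left adjoint, hence functorial on algebras: a morphism $f\!:H\to H'$ of affine Hopf algebras induces a continuous unital homomorphism $\wh f\!:\wh H^{\,\mathsf{PG}}\to\wh{H'}^{\,\mathsf{PG}}$ with $\wh f\circ\io_H=\io_{H'}\circ f$. It remains to check that $\wh f$ respects the coalgebra structure, which I would do through the uniqueness clause of the universal property. Both $(\wh f\ptn\wh f)\circ\De_{\wh H^{\,\mathsf{PG}}}$ and $\De_{\wh{H'}^{\,\mathsf{PG}}}\circ\wh f$ are continuous homomorphisms from $\wh H^{\,\mathsf{PG}}$ into $\wh{H'}^{\,\mathsf{PG}}\ptn\wh{H'}^{\,\mathsf{PG}}$, which lies in $\mathsf{PGL}$ by Theorem~\ref{tensfing}; precomposing each with $\io_H$ gives $(\io_{H'}f\ptn\io_{H'}f)\circ\De_H$, using naturality of $\io$ and the fact that $f$ is a coalgebra map, so the two homomorphisms coincide. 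The same argument applied to the counit shows $\varepsilon_{\wh{H'}^{\,\mathsf{PG}}}\circ\wh f=\varepsilon_{\wh H^{\,\mathsf{PG}}}$. Thus $\wh f$ is a morphism of $\ptn$-bialgebras, and since a bialgebra morphism between Hopf algebras automatically commutes with the antipode (the antipode being the convolution inverse of the identity, a fact that is diagrammatic and so valid in $(\text{LCS},\ptn)$), $\wh f$ is a morphism of Hopf $\ptn$-algebras; functoriality of $H\mapsto\wh H^{\,\mathsf{PG}}$ on morphisms is then inherited from functoriality of the envelope.

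The main obstacle is concentrated in the single identification $\wh H^{\,\mathsf{PG}}\ptn \wh H^{\,\mathsf{PG}}\cong\wh H^{\,\mathsf{PG}}\mathbin{\otimes^{\,\mathsf{PG}}\!}\wh H^{\,\mathsf{PG}}$: this is exactly where affineness enters, and it is what allows the comultiplication to be read as a map into the correct algebra so that Proposition~\ref{PGeHopf} becomes applicable; for a general $\mathsf{PGL}$ algebra this step is unavailable (cf.\ Remark~\ref{PSDTvsPGL}). A subsidiary technical point, to be dispatched first, is the realisation of the algebraic Hopf algebra $H$ as a Hopf $\ptn$-algebra, where the only non-formal verification is the joint continuity of the multiplication in the strongest locally convex topology, which I expect to follow from finite generation by estimating a weight seminorm through the length grading.
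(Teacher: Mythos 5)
Your proposal is correct and takes essentially the same route as the paper: the strongest locally convex topology makes the affine Hopf algebra $H$ a Hopf $\ptn$-algebra (the paper justifies joint continuity of the multiplication by citing Cuntz's result for countable-dimensional algebras, which is what your length-filtration estimate amounts to), and Part~(A) then follows from Propositions~\ref{envfing}, \ref{tensfing0} and~\ref{PGeHopf} exactly as you argue. Your Part~(B) merely spells out the uniqueness-of-factorization argument that the paper delegates to Proposition~6.7 of \cite{Pir_stbflat}.
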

\begin{proof}
(A)~Proposition~\ref{envfing} implies that $\wh H^{\,\mathsf{PG}}$ is finitely $C^\infty$-generated. So $\wh H^{\,\mathsf{PG}}\ptn \wh H^{\,\mathsf{PG}}\to \wh H^{\,\mathsf{PG}}\mathbin{\otimes^{\,\mathsf{PG}}\!}\wh H^{\,\mathsf{PG}}$ is a topological isomorphism by Proposition~\ref{tensfing0}.

On the other hand, being finitely generated, $H$ has a countable basis.
Every real associative algebra with countable basis is a locally convex
algebra with respect to the strongest locally convex topology. (The proof is the same as for a complex associative algebra; see, e.g., \cite[Proposition 2.1]{Cu05}.) Since the topology is strongest, $H\ptn H\cong H\otimes H$ as a vector space and hence $H$ is a Hopf $\ptn$-algebra. So we can apply Proposition~\ref{PGeHopf}, which completes the proof.

The argument for Part~(B) is the same as for \cite[Proposition 6.7]{Pir_stbflat}.
\end{proof}

\subsection*{Examples of $C^\infty$-finitely generated Hopf algebras}

First, we consider  examples from~\S\,4 
in \cite{ArNew}. Theorem~\ref{PGenvH} immediately implies the following result.

\begin{pr}\label{fgeLie}
Let $\fg$ be a finitely generated real Lie algebra.
Then $\wh U(\fg)^{\,\mathsf{PG}}$ is a $C^\infty$-finitely generated Hopf algebra.
\end{pr}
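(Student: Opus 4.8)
The plan is to apply Theorem~\ref{PGenvH} directly, once I verify that its single hypothesis is met. The statement of Proposition~\ref{fgeLie} concerns $\wh U(\fg)^{\,\mathsf{PG}}$, so the obvious strategy is to recognize $U(\fg)$ as an affine real Hopf algebra and then invoke the functor constructed in Theorem~\ref{PGenvH}. Thus the first and essentially only substantive step is to check that $U(\fg)$ falls within the scope of that theorem, i.e.\ that it is both a real Hopf algebra and affine (finitely generated as an associative algebra).

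The Hopf algebra structure on $U(\fg)$ is standard: the comultiplication is determined by declaring the elements of $\fg$ primitive, i.e.\ $\De(x)=x\otimes 1+1\otimes x$ for $x\in\fg$, with counit $\varepsilon(x)=0$ and antipode $S(x)=-x$, extended multiplicatively. So I would simply recall this classical fact, perhaps with a reference. The affineness is where the hypothesis on $\fg$ enters: if $\fg$ is finitely generated as a real Lie algebra, say by $x_1,\dots,x_k$, then by the Poincar\'e--Birkhoff--Witt theorem $U(\fg)$ is generated as an associative algebra by the same elements $x_1,\dots,x_k$. Hence $U(\fg)$ is affine in the sense required.

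With these two observations in place, $U(\fg)$ is an affine real Hopf algebra, and Theorem~\ref{PGenvH}(A) endows $\wh U(\fg)^{\,\mathsf{PG}}$ with a real Hopf $\ptn$-algebra structure; moreover, by Proposition~\ref{envfing}(A), $\wh U(\fg)^{\,\mathsf{PG}}$ is finitely $C^\infty$-generated. Combining these two facts, $\wh U(\fg)^{\,\mathsf{PG}}$ is a real Hopf $\ptn$-algebra whose underlying algebra is finitely $C^\infty$-generated, which is precisely the definition of a $C^\infty$-finitely generated Hopf algebra. This completes the argument.

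I do not anticipate any genuine obstacle here, since this is stated in the paper as an immediate corollary of Theorem~\ref{PGenvH}. The only point requiring the slightest care is the passage from ``finitely generated as a Lie algebra'' to ``finitely generated as an associative algebra'', which is exactly what PBW (or more elementarily, the fact that products of Lie generators span $U(\fg)$) supplies; one should be careful to note that a finite Lie-algebra generating set is automatically a finite associative-algebra generating set, rather than conflating the two notions of generation.
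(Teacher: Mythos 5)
Your proposal is correct and follows exactly the paper's route: the paper proves Proposition~\ref{fgeLie} by noting that it is an immediate consequence of Theorem~\ref{PGenvH}, which is precisely your argument. Your additional verification that $U(\fg)$ is an affine real Hopf algebra (the standard Hopf structure with primitive Lie elements, plus the observation that a finite Lie generating set of $\fg$ generates $U(\fg)$ as an associative algebra, since brackets become commutators in $U(\fg)$) is exactly the detail the paper leaves implicit.
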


In addition to the notation $C^\infty_{\ff_k}$, we use a similar notation for a triangular finite-dimensional real Lie algebra~$\fg$. Fix  a linear basis $e_{k+1},\ldots, e_m$ in~$[\fg,\fg]$ and its complement $e_1,\ldots, e_k$ up to a linear basis in~$\fg$. We use the following notation introduced in \cite{ArOld}:
\begin{equation}\label{Cinffgdef}
C^\infty_\fg\!:=C^\infty(\R^k)\ptn \R[[e_{k+1},\ldots,e_m]].
\end{equation}
According to \cite[Theorem~4.12]{ArNew}, $C^\infty_\fg\cong\wh U(\fg)^{\,\mathsf{PG}}$ in the triangular case. Thus  we get the following corollary.

\begin{co}
Let $\fg$ be a triangular finite-dimensional real Lie algebra.
Then $C^\infty_\fg$ is a $C^\infty$-finitely generated Hopf algebra with respect to the operations continuously extended from $U(\fg)$.
\end{co}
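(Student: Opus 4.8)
The plan is to reduce the statement to Proposition~\ref{fgeLie} by means of the identification $C^\infty_\fg\cong\wh U(\fg)^{\,\mathsf{PG}}$ supplied by \cite[Theorem~4.12]{ArNew}. First I would observe that a finite-dimensional Lie algebra is \emph{a fortiori} finitely generated, a linear basis being a finite generating set, so $\fg$ satisfies the hypothesis of Proposition~\ref{fgeLie}. That proposition (which itself rests on Theorem~\ref{PGenvH} applied to the affine Hopf algebra $U(\fg)$) shows that $\wh U(\fg)^{\,\mathsf{PG}}$ is a $C^\infty$-finitely generated Hopf algebra, its Hopf $\ptn$-algebra structure being the unique one for which the canonical map $U(\fg)\to\wh U(\fg)^{\,\mathsf{PG}}$ is a homomorphism of Hopf $\ptn$-algebras.

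Next I would transport this structure along the isomorphism. Since $\fg$ is triangular, \cite[Theorem~4.12]{ArNew} gives a topological algebra isomorphism $C^\infty_\fg\cong\wh U(\fg)^{\,\mathsf{PG}}$ compatible with the two embeddings of $U(\fg)$. Pulling back the comultiplication, counit and antipode of $\wh U(\fg)^{\,\mathsf{PG}}$ along this isomorphism equips $C^\infty_\fg$ with a Hopf $\ptn$-algebra structure, and it remains finitely $C^\infty$-generated, so it is a $C^\infty$-finitely generated Hopf algebra.

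Finally, to justify the phrase \emph{continuously extended from $U(\fg)$}, I would use that the image of $U(\fg)$ in its envelope, hence in $C^\infty_\fg$, is dense by construction. The transported operations therefore agree with those of $U(\fg)$ on a dense subalgebra and, being continuous, are uniquely determined by this requirement. The only point needing care --- and a mild one at that --- is to confirm that the isomorphism of \cite[Theorem~4.12]{ArNew} intertwines the two copies of $U(\fg)$, so that the phrase is unambiguous; this is immediate since that isomorphism is built to be the identity on the generators $e_1,\dots,e_m$.
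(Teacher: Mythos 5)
Your proposal is correct and follows exactly the paper's route: the corollary is obtained by combining Proposition~\ref{fgeLie} (applicable because a finite-dimensional Lie algebra is finitely generated) with the isomorphism $C^\infty_\fg\cong\wh U(\fg)^{\,\mathsf{PG}}$ from \cite[Theorem~4.12]{ArNew}, transporting the Hopf $\ptn$-algebra structure along it. Your additional remarks on density of the image of $U(\fg)$ and on the isomorphism intertwining the two embeddings only make explicit what the paper leaves implicit.
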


Recall that $C^\infty_{\ff_k}\cong\cF_{k}^{\,\mathsf{PG}}\cong \wh U(\ff_k)^{\,\mathsf{PG}}$ as a topological algebra; see Theorem~\ref{fuctcafree}.
Using Theorems~\ref{multCiffk} and \ref{fuctcafree}  we get another corollary of Proposition~\ref{fgeLie}.

\begin{co}
Let $k\in \N$. Then $C^\infty_{\ff_k}$ is a $C^\infty$-finitely generated Hopf algebra with respect to the operations continuously extended from $U(\ff_k)$.
\end{co}

Turning to the quantum groups of the form $\SL_q(2,\R)$ (see Remark~\ref{qalg}), note first that since $\cR(\SL_q(2,\R))$ is finitely generated, Theorem~\ref{PGenvH} implies that ${\cR(\SL_q(2,\R))\sphat}{\,\,}^{\,\mathsf{PG}}$ is a $C^\infty$-finitely generated Hopf algebra.
Following \cite[\S\,4]{ArNew} we denote the latter algebra by $C^\infty(\SL_q(2,\R))$. Note that $C^\infty(\SL_q(2,\R))$ coincides with the envelope of the universal algebra $A_q$ whose generators satisfy the same relations as $\cR(\SL_q(2,\R))$ and the additional assumption that $a$ is invertible; see \cite[Corollary~4.8]{ArNew}. Note that we may omit $d$ in the set of generators because  $d=a^{-1}(1+ qbc)$ in this case. The following result gives explicit formulas for the operations on $\wh A_q^{\,\mathsf{PG}}$.

\begin{pr}\label{AqHopf}
Let $q\in\R\setminus\{0,1,-1\}$. Then $\wh A_q^{\,\mathsf{PG}}$ is a $C^\infty$-finitely generated Hopf algebra with respect to the comultiplication $\De$, counit $\varepsilon$ and antipode $S$ determined by
\begin{alignat*}{3}
\De\!&:
 a\mapsto a\otimes a+ b\otimes c,&\quad
 b&\mapsto a\otimes b+ b\otimes a^{-1}(1+ qbc),&\quad
 c&\mapsto c\otimes a+ a^{-1}(1+ qbc)\otimes c\,;\\
S\!&:
 a\mapsto a^{-1}(1+ qbc),&\quad
 b&\mapsto-q^{-1}b,&\quad
 c &\mapsto-qc\,;\\
\varepsilon\!&:
 a\mapsto1,&\quad b&,c\mapsto0.&&
\end{alignat*}
\end{pr}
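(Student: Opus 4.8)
The plan is to transport the Hopf $\ptn$-algebra structure from the purely algebraic Hopf algebra $\cR(\SL_q(2,\R))$ across the enveloping functor, rather than to equip $A_q$ directly with a comultiplication (localising $\cR(\SL_q(2,\R))$ at $a$ corresponds to the non-subgroup cell $\{a\neq 0\}$, so there is no reason for $A_q$ itself to be a Hopf algebra). Recall that $\cR(\SL_q(2,\R))$ is an affine Hopf algebra with the standard $\SL_q(2)$ operations in the four generators $a,b,c,d$:
\begin{align*}
\De(a)&=a\otimes a+b\otimes c, & \De(b)&=a\otimes b+b\otimes d,\\
\De(c)&=c\otimes a+d\otimes c, & \De(d)&=c\otimes b+d\otimes d,
\end{align*}
with $\varepsilon(a)=\varepsilon(d)=1$, $\varepsilon(b)=\varepsilon(c)=0$, and $S(a)=d$, $S(b)=-q^{-1}b$, $S(c)=-qc$, $S(d)=a$. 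By Theorem~\ref{PGenvH}(A) the envelope $\wh{\cR(\SL_q(2,\R))}^{\,\mathsf{PG}}=C^\infty(\SL_q(2,\R))$ carries a unique $C^\infty$-finitely generated Hopf algebra structure for which the canonical map $\cR(\SL_q(2,\R))\to C^\infty(\SL_q(2,\R))$ is a morphism of Hopf $\ptn$-algebras; since this map has dense range, $\De$, $\varepsilon$ and $S$ on the envelope are the continuous extensions of the algebraic operations, and $\De$ takes values in $C^\infty(\SL_q(2,\R))\ptn C^\infty(\SL_q(2,\R))\cong C^\infty(\SL_q(2,\R))\mathbin{\otimes^{\,\mathsf{PG}}\!}C^\infty(\SL_q(2,\R))$ by Proposition~\ref{tensfing0}.

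Next I would use the identification $\wh A_q^{\,\mathsf{PG}}\cong C^\infty(\SL_q(2,\R))$ recorded before the statement (valid for $q\in\R\setminus\{0,1,-1\}$ by \cite[Corollary~4.8]{ArNew}); transporting the above structure along this isomorphism makes $\wh A_q^{\,\mathsf{PG}}$ a $C^\infty$-finitely generated Hopf algebra. Under the identification $a$ is invertible and $d=a^{-1}(1+qbc)$ is a genuine element of $\wh A_q^{\,\mathsf{PG}}$, so substituting this expression for $d$ into the algebraic formulas above turns each right-hand side into an element of $\wh A_q^{\,\mathsf{PG}}\ptn\wh A_q^{\,\mathsf{PG}}$ and yields precisely the asserted expressions: $\De(b)=a\otimes b+b\otimes a^{-1}(1+qbc)$, $\De(c)=c\otimes a+a^{-1}(1+qbc)\otimes c$ and $S(a)=a^{-1}(1+qbc)$, while $\De(a)$, $S(b)$, $S(c)$ and $\varepsilon$ are unchanged. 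Since a continuous homomorphism (and the antipode, a continuous anti-homomorphism) is determined by its values on a generating set, these formulas determine $\De$, $\varepsilon$ and $S$ completely, and the coalgebra and antipode axioms hold by continuity because they already hold on the dense image of $\cR(\SL_q(2,\R))$.

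Once the two external inputs are in place the verification is bookkeeping, so I do not expect a single hard computational step; the delicate point is conceptual. One must apply Theorem~\ref{PGenvH} to $\cR(\SL_q(2,\R))$ and not to $A_q$, invoking \cite[Corollary~4.8]{ArNew} only to rewrite the result in the reduced generating set $\{a,b,c\}$. The sole genuinely analytic ingredient is the invertibility of $a$ in the envelope, which legitimises the substitution $d=a^{-1}(1+qbc)$; this is exactly what the cited identification provides and is the source of the restriction $q\neq\pm1$.
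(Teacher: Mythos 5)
Your proposal is correct and follows essentially the same route as the paper's own proof: apply Theorem~\ref{PGenvH} to the affine Hopf algebra $\cR(\SL_q(2,\R))$, identify the envelope with $\wh A_q^{\,\mathsf{PG}}$ via \cite[Corollary~4.8]{ArNew}, and use the invertibility of $a$ (the paper cites \cite[Lemma~4.6]{ArNew} for this) to substitute $d=a^{-1}(1+qbc)$ into the standard $\SL_q(2)$ formulas. The extra details you supply --- density of the image, Proposition~\ref{tensfing0} for identifying the codomain of $\De$, and the warning that one must not try to make $A_q$ itself a Hopf algebra --- all match remarks the paper makes in or immediately after its proof.
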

\begin{proof}
As noted above, $C^\infty(\SL_q(2,\R))$ is a $C^\infty$-finitely generated Hopf algebra by Theorem~\ref{PGenvH} and isomorphic to $\wh A_q^{\,\mathsf{PG}}$ by \cite[Corollary~4.8]{ArNew}. To complete the proof we need to establish the formulas for the operations.

Recall that the operations on $\cR(\SL_q(2,\R))$ is given by the following rules:
\begin{alignat*}{4}
\De\!&:
 a\mapsto a\otimes a+ b\otimes c,&\quad
 b&\mapsto a\otimes b+ b\otimes d,&\quad
 c&\mapsto c\otimes a+ d\otimes c,&\quad
 d&\mapsto c\otimes b+ d\otimes d;\\
S\!&:
 a\mapsto d,&\quad
 b&\mapsto-q^{-1}b,&\quad
 c &\mapsto-qc,&\quad
 d&\mapsto a;\\
\varepsilon\!&:
a,d\mapsto1,&\quad
b&,c\mapsto0;&&&
\end{alignat*}
see, e.g., \cite[\S\S\,4.1.1--4.1.2]{KSc}. By Theorem~\ref{PGenvH}, all the operations extend to  $C^\infty(\SL_q(2,\R))$. It follows from \cite[Lemma~4.6]{ArNew} that $a$ is invertible and so $d=a^{-1}(1+ qbc)$. Thus,  replacing $d$  with $a^{-1}(1+ qbc)$ we have the  formulas we want.
\end{proof}

\begin{rem}
In fact, $A_q$ is a localization of $\cR(\SL_q(2,\R))$ inverting $a$. But we cannot endow $A_q$ with a Hopf algebra structure given by the formulas in Proposition~\ref{AqHopf}. This follows from that fact that $a\otimes a+ b\otimes c$ is not  invertible in $A_q$ and so $\De$ is not well defined.

On the other hand, $a\otimes a+ b\otimes c$ is invertible in $\wh A_q^{\,\mathsf{PG}}$. This is implied by Proposition~\ref{AqHopf} but can also be shown directly.  It suffices to show this for elements $a,b,c$ of a Banach algebra of polynomial growth such that the first three relations in the definition of $\cR(\SL_q(2,\R))$  hold and $a$ is invertible.
Indeed, since $a\otimes a+ b\otimes c=(a\otimes a)(1\otimes 1+ a^{-1}b\otimes a^{-1}c)$, we need to show that $1\otimes 1+ a^{-1}b\otimes a^{-1}c$ is invertible. Note that $ab=q(q-1)^{-1}[a,b]$ is nilpotent and so is $a^{-1}b$, as well as $a^{-1}b\otimes a^{-1}c$.  Then
$\sum (-1)^n (a^{-1}b)^n\otimes (a^{-1}c)^n$ converges and hence the limit is the inverse element of $1\otimes 1+ a^{-1}b\otimes a^{-1}c$.
\end{rem}

Finally, we consider the algebra $(C^\infty_{\fa\ff_1})_\hbar$ in Theorem~\ref{multCifgqua}; cf. the holomorphic case in \cite[Proposition~5.6]{AHHFG}.

\begin{pr}\label{af1qco}
Let $\hbar\in\mathbb{C}$ and $\sinh\hbar\ne 0$. Then $(C^\infty_{\fa\ff_1})_\hbar$ is a $C^\infty$-finitely generated Hopf algebra with respect to the comultiplication $\De$,  counit $\varepsilon$ and
antipode $S$ determined by
\begin{alignat*}{2}
\De\!&:e_1\mapsto e_1\otimes e^{-\hbar e_2}+ e^{\hbar e_2}\otimes e_1,&\qquad &e_2\mapsto 1\otimes e_2+ e_2\otimes 1;\\
 S\!&:e_1\mapsto -e_1-\frac{\hbar }{\sinh \hbar}\,\sinh \hbar e_2,& & e_2\mapsto -e_2;\\
 \varepsilon\!&:e_1,e_2\mapsto 0\,,&&
\end{alignat*}
respectively, Moreover, $S$ is invertible.
\end{pr}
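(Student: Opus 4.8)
The plan is to construct the three structure maps via the universal property of $(C^\infty_{\fa\ff_1})_\hbar$ and then verify the Hopf axioms by reducing everything to the generators $e_1,e_2$. Write $H:=(C^\infty_{\fa\ff_1})_\hbar$ and $\phi(e_2):=\sinh(\hbar e_2)/\sinh\hbar$, so that the defining relation \eqref{af1qu} reads $[e_1,e_2]=\phi(e_2)$. By construction (cf.\ Theorem~\ref{multCifgqua}), $H$ is the universal finitely $C^\infty$-generated algebra on $e_1,e_2$ subject to this relation; hence a continuous homomorphism out of $H$ into any $\mathsf{PGL}$ algebra $B$ is the same datum as a choice of images of $e_1,e_2$ in $B$ satisfying \eqref{af1qu} (with $\sinh$ taken in the $C^\infty$-functional calculus of $B$). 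The crucial point for $\De$ is that its intended codomain $H\ptn H$ is again finitely $C^\infty$-generated, hence a $\mathsf{PGL}$ algebra, by Theorem~\ref{tensfing}; likewise $\R$ and $H^{\op}$ are $\mathsf{PGL}$ algebras (for $H^{\op}$ note $(\cF_{2}^{\,\mathsf{PG}})^{\op}\cong\cF_{2}^{\,\mathsf{PG}}$, so $H^{\op}$ is again a quotient of $\cF_{2}^{\,\mathsf{PG}}$). It therefore remains to check that each prescribed pair of images satisfies \eqref{af1qu}: for $\De$ in $H\ptn H$, for $S$ in $H^{\op}$, and for $\varepsilon$ in $\R$ (where it is trivial).

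The main computation is the one for $\De$. Put $E_i:=\De(e_i)$, so $E_1=e_1\otimes e^{-\hbar e_2}+e^{\hbar e_2}\otimes e_1$ and $E_2=1\otimes e_2+e_2\otimes 1$. Since the two summands of $E_2$ commute, the functional calculus yields the addition formula $\sinh(\hbar E_2)=\sinh(\hbar e_2)\otimes\cosh(\hbar e_2)+\cosh(\hbar e_2)\otimes\sinh(\hbar e_2)$, and similarly $e^{\pm\hbar E_2}=e^{\pm\hbar e_2}\otimes e^{\pm\hbar e_2}$ (so $e^{\hbar e_2}$ is group-like). Expanding $[E_1,E_2]$ and using that any function of $e_2$ commutes with $e_2$ (so the mixed contributions $e^{\pm\hbar e_2}e_2-e_2e^{\pm\hbar e_2}$ vanish) together with $[e_1,e_2]=\phi(e_2)$, I obtain $[E_1,E_2]=\phi(e_2)\otimes e^{-\hbar e_2}+e^{\hbar e_2}\otimes\phi(e_2)$. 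Writing $e^{\pm\hbar e_2}=\cosh(\hbar e_2)\pm\sinh(\hbar e_2)$, the $\sinh\otimes\sinh$ contributions cancel and one is left with exactly $\sinh(\hbar E_2)/\sinh\hbar$; this cancellation is the heart of the argument and the step I expect to be the main obstacle. For $S$ the analogous check is shorter: with $s_i:=S(e_i)$ one has $s_2=-e_2$ and $s_1=-e_1-\tfrac{\hbar}{\sinh\hbar}\sinh(\hbar e_2)$, and since the correction term is a function of $e_2$ it drops out of the bracket, so $[s_1,s_2]=[e_1,e_2]=\phi(e_2)$; this is precisely relation \eqref{af1qu} as required in $H^{\op}$, where the bracket is reversed and $\phi(s_2)=\phi(-e_2)=-\phi(e_2)$.

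Next I would verify the Hopf axioms. Because $\De,\varepsilon$ are continuous homomorphisms, $S$ a continuous anti-homomorphism, and polynomials in $e_1,e_2$ are dense in $H$, coassociativity and the counit identities reduce to the generators: $e_2$ is primitive and $e_1$ is skew-primitive with respect to the group-like element $e^{\hbar e_2}$, so both identities are immediate on $e_1,e_2$ and extend by the homomorphism property and continuity. The antipode identities need slightly more care, since $h\mapsto m(S\otimes\id)\De(h)$ is a convolution rather than a homomorphism. Here I would invoke the standard fact that for an anti-endomorphism $S$ of a bialgebra the set $\{h: m(S\otimes\id)\De(h)=\varepsilon(h)1\}$ is a subalgebra, and (being cut out by a continuous condition) is closed; checking $m(S\otimes\id)\De=u\varepsilon$ on $e_1,e_2$ by a direct computation using $[e_1,g(e_2)]=\phi(e_2)g'(e_2)$ and the identity $\hbar\phi(e_2)=\tfrac{\hbar}{\sinh\hbar}\sinh(\hbar e_2)$ (which makes the surviving terms cancel) then forces the identity on the dense generated subalgebra, hence on all of $H$. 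The companion identity $m(\id\otimes S)\De=u\varepsilon$ is handled in the same way.

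Finally, for the invertibility of $S$ I would compute $S^2$ on the generators: $S^2(e_2)=e_2$ and $S^2(e_1)=e_1+\tfrac{2\hbar}{\sinh\hbar}\sinh(\hbar e_2)$. This is the continuous homomorphism fixing $e_2$ and shifting $e_1$ by a function of $e_2$, and such a map is manifestly an automorphism: its inverse shifts $e_1$ back, and shifting $e_1$ by any function of $e_2$ preserves \eqref{af1qu}. Hence $S^2$ is bijective with continuous inverse by the open mapping theorem for Fr\'echet spaces, and since $S$ commutes with $S^2$ it follows that $S$ is invertible with $S^{-1}=(S^2)^{-1}S$.
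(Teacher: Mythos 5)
Your proposal is correct and takes essentially the same approach as the paper: the paper's proof is simply a citation of the holomorphic analogue \cite[Proposition~5.6]{AHHFG} together with the observation that the only new ingredient is the identification $(C^\infty_{\fa\ff_1})_\hbar \mathbin{\otimes^{\,\mathsf{PG}}\!} (C^\infty_{\fa\ff_1})_\hbar \cong (C^\infty_{\fa\ff_1})_\hbar \ptn (C^\infty_{\fa\ff_1})_\hbar$ (Proposition~\ref{tensfing0}), which is exactly the ``crucial point'' you isolate so that the universal property of $(C^\infty_{\fa\ff_1})_\hbar$ can produce the comultiplication. Your write-up merely fills in the computations that the paper delegates to that reference --- the relation checks in $H\ptn H$, $H^{\op}$ and $\R$, the reduction of the Hopf axioms to generators via the closed-subalgebra and density argument, and the invertibility of $S$ --- and these are all carried out correctly.
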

\begin{proof}
The argument is the same as for \cite[Proposition 5.6]{AHHFG}, where the case of the universal complex Arens--Michael algebra determined by the same relation is considered. The only difference is that we use the fact that $(C^\infty_{\fa\ff_1})_\hbar$ is a finitely $C^\infty$-generated algebra universal in the class $\mathsf{PGL}$, which implies that $(C^\infty_{\fa\ff_1})_\hbar\mathbin{\otimes^{\,\mathsf{PG}}\!} (C^\infty_{\fa\ff_1})_\hbar\cong (C^\infty_{\fa\ff_1})_\hbar\ptn (C^\infty_{\fa\ff_1})_\hbar$ by Proposition~\ref{tensfing0}.
\end{proof}

Finally, we consider a non-trivial finite-dimensional example.

\begin{exm}
Consider the Taft algebras $H_{n,q}$. Note that over $\R$ the definition of these algebras makes sense only when $n\le 2$. We take $H_{2,-1}$, which is the $4$-dimensional Hopf algebra generated as an associative algebra  by $a$ and $x$ subject to the relations
$a^2 = 1$, $x^2 = 0$ and $xa = -ax$; see, e.g., \cite[\S\,7.3]{Rad}. Being finite dimensional, $H_{2,-1}$ is a Hopf $\ptn$-algebra.

For given $\la\in\R^\times$ consider the following representation of $H_{2,-1}$:
$$
\pi_\la(a)=
 \begin{pmatrix}
1& 0 \\
0& -1\\
\end{pmatrix},\qquad
\pi_\la(x)=
 \begin{pmatrix}
0& \la \\
0& 0\\
\end{pmatrix}.
$$
It is easy to see that $\pi_{\la_1}\oplus \pi_{\la_2}$ is injective for every $\la_1\ne\la_2$. So we can assume that $H_{2,-1}$ is a subalgebra of $\rT_4$ and hence is of polynomial growth by Proposition~\ref{PGfd}. Then it is finitely $C^\infty$-generated by Corollary~\ref{finfCi}.
\end{exm}

\appendix

\section{Envelopes of free algebras}
\label{sec:efa}

Here we discuss  the structure of $\cF_{k}^{\,\mathsf{PG}}$ for general $k\in\N$. It is convenient to use the fact that the free algebra $\R\langle x_1,\ldots,x_k\rangle$ is isomorphic to the universal enveloping algebra $U(\ff_k)$, where~$\ff_k$ denotes a free real Lie algebra with $k$ generators. This section contains an explicit construction of $\wh U(\ff_k)^{\,\mathsf{PG}}$, which is topologically isomorphic $\cF_{k}^{\,\mathsf{PG}}$. First recall some notation from~\cite{ArNew}.

\subsection*{Notation and definitions from~\cite{ArNew}}

To describe $\wh U(\ff_k)^{\,\mathsf{PG}}$ we consider homomorphisms to Banach algebras of polynomial growth. Since the commutant of such an algebra is nilpotent \cite[Theorem 2.8 and Proposition 2.9]{ArOld}, we need a description of $U(\ff_k)$ that uses $[\ff_k,\ff_k]$. It is based on  polynomials in the operators $\ad e_i$, where $\{e_1,\ldots e_k\}$ is the set of generators of $\ff_k$. Denote by $\De$ the set of all pairs $(l,j)$ of positive integers such that $k\ge l>j\ge 1$.
 For $\de\in\De$ and $\be=(\be_1,\ldots, \be_l)\in\Z_+^l$ put
\begin{equation}\label{notg}
 g_{\de\be}\!:=(\ad{e_1})^{\be_1}\cdots (\ad{e_{l-1}})^{\be_{l-1}}(\ad{e_l})^{\be_l+1}(e_j),
\end{equation}
or, in an explicit form,
$$
g_{\de\be}=[e_1,[\ldots [e_2,[\cdots [e_l, e_j]\cdots]],
$$
where each of $e_i$ repeats $\be_i$ times except $e_l$ which repeats $\be_l+1$ times.

It is well known that the commutant $[\ff_k,\ff_k]$ is a free Lie algebra with the algebraic basis
\begin{equation}\label{basisg}
  \{g_{\de\be}\!:\,\de\in\De,\, \be\in\Z_+^l\};
\end{equation}
for a proof see, e.g., \cite[p.\,64, \S\,2.4.2, Corollary~2.16(ii)]{Ba21} or \cite[p.\,72, Lemma 2.11.16]{BK94}.
The PBW theorem implies that there is a linear isomorphism
\begin{equation}\label{Ufdec}
U(\ff_k)\cong \R[e_1,\ldots,e_k ]\otimes T(V),
\end{equation}
where~$V$ is the linear span of the set in~\eqref{basisg} and $T(V)$ denotes the tensor algebra of $V$.

Furthermore, each element of $V$ has the form
\begin{equation}\label{Ufdecee}
\sum_{\de\in\De}\Psi_\de(f_\de)([e_l,e_j]),
\end{equation}
where $f_\de\in \R[\mu_1,\ldots,\mu_l]$ and
\begin{equation}\label{orcalPsi}
\Psi_\de\!:\mu_1^{\be_1}\cdots  \mu_l^{\be_l} \mapsto \ad e_1^{\be_1}\cdots  \ad e_l^{\be_l}
\end{equation}
is a linear map from $\R[\mu_1,\ldots,\mu_l]$ to the space of linear endomorphisms of $[\ff_k,\ff_k]$. Therefore,
\begin{equation}\label{descV}
V\cong \bigoplus_{\de\in\De} \R[\mu_1,\ldots,\mu_l].
\end{equation}

Put $X\!:=\bigsqcup_{\de\in\De} X_\de$, where $X_{\de}\!:=\R^l$ and $\de=(l,j)$. We also put $X=\emptyset$ when $k=1$.
Let
\begin{equation}\label{cinX}
C^\infty(X)\!:=\bigoplus_{\de\in\De}  C^\infty(X_\de).
\end{equation}
The natural maps $\R[\mu_1,\ldots,\mu_l]\to C^\infty(X_\de)$, $\de\in\De$, induce an embedding $V\to C^\infty(X)$. Denote by $[T](C^\infty(X))$ the formal tensor algebra associated with the Fr\'echet space $C^\infty(X)$, that is,
$$
[T](C^\infty(X))=\prod_{m=0}^\infty C^\infty(X)^{\ptn m}
$$
with the concatenation as the multiplication; see the \cite[Definition 5.1]{ArNew}. It is also convenient to write
\begin{equation}\label{fotwal}
[T](C^\infty(X))=\prod_{m=0}^\infty C^\infty(X^m),
\end{equation}
where $X^m$ is the disjoint union of all possible $X_{\de_1}\times\cdots\times X_{\de_m}$.

The following definition is introduced in \cite[eq. (5.5)]{ArNew}:
\begin{equation}\label{defCifk}
C^\infty_{\ff_k}\!:=C^\infty(\R^k)\ptn [T](C^\infty(X))\qquad (k\in\N).
\end{equation}

The next two theorems were announced in \cite{ArNew} and proved there in the case when $k\le2$. A proof for arbitrary~$k$ was postponed due to many technical details. Our purpose here is to give a complete argument in the general case.

\begin{thm}\label{multCiffk}
\cite[Theorem 5.6]{ArNew}
Let $k\in \N$.

\emph{(A)}~The multiplication on $U(\ff_k)$ extends to a continuous multiplication on $C^\infty_{\ff_k}$.

\emph{(B)}~$C^\infty_{\ff_k}$ is  in $\mathsf{DTS}$ (see Definition~\ref{PSDTde}) and is therefore  a $\mathsf{PGL}$ algebra.
\end{thm}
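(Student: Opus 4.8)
The plan is to establish both parts simultaneously by realizing $C^\infty_{\ff_k}$ as a closed subalgebra of a product of algebras of the form $C^\infty(M)\ptn\rT_{p}$ with $M$ a manifold, exactly as in the treatment of the deformation in Theorem~\ref{multCifgqua}. The decisive simplification is that $\ff_k$ is \emph{free}: any assignment of the generators $e_1,\dots,e_k$ to elements of an associative algebra extends uniquely to a homomorphism out of $U(\ff_k)$, so no relations must be solved and we may choose triangular representations at will. The whole proof then reduces to producing a family $\{\wt\pi_\si\}$ of such representations whose product $\rho=\prod_\si\wt\pi_\si$ is a topological isomorphism of $C^\infty_{\ff_k}$ onto a closed subalgebra of $\prod_\si\bigl(C^\infty(M_\si)\ptn\rT_{p_\si}\bigr)$; both the multiplication of Part~(A) and the membership in $\mathsf{DTS}$ of Part~(B) are then inherited from the ambient product.

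First I would construct the representations. Modelling on the matrices $X_p,E_p$ of Theorem~\ref{multCifgqua}, I send each generator to a triangular matrix over $C^\infty(M_\si)$ whose diagonal consists of affine coordinate functions on $M_\si$ and whose strictly upper triangular part $N_i^{(\si)}$ realizes a finite-dimensional nilpotent quotient of the free commutator structure. The diagonal is chosen so that both the eigenvalues of the image of $e_i$ and the weights of the operators $\ad e_i$ vary continuously over $M_\si$. Because the $N_i^{(\si)}$ are nilpotent, the commutator basis elements $g_{\de\be}$ of \eqref{notg} map to nilpotent matrices and the tensor factor $T(V)$ maps through a finite truncation; under the correspondence $\mu_i\leftrightarrow\ad e_i$ of \eqref{orcalPsi}, the continuous variation of the $\ad$-weights makes the data $f_\de\in C^\infty(X_\de)$ of \eqref{descV} appear as genuine $C^\infty$-functions sampled over $M_\si$, while the diagonal translations recover the $C^\infty(\R^k)$-factor. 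Since in each representation the images of $e_1,\dots,e_k$ have polynomial growth and those of the commutator generators are nilpotent, the ordered $C^\infty$-functional calculus of Theorem~\ref{fucani} guarantees that each $\wt\pi_\si$, initially defined on the dense subalgebra $U(\ff_k)$, extends to a continuous linear map on $C^\infty_{\ff_k}$.

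It then remains to show that $\rho$ is topologically injective with closed range, the step where the computation of Theorem~\ref{multCifgqua}(A) must be upgraded from two generators to~$k$. Writing a general $a\in C^\infty_{\ff_k}$ in its PBW coordinates according to \eqref{Ufdec}, the entries of $\wt\pi_\si(a)$ are linear combinations of samples of these coordinates carrying a unitriangular leading term once the coordinates are ordered by tensor-algebra degree and by the natural order on the multi-indices $(\de,\be)$. An induction along this order recovers every coordinate from the entries, giving injectivity; surjectivity of the individual entry maps yields closedness of the range, and the inverse mapping theorem for Fr\'echet spaces upgrades injectivity to a topological embedding. Finally, since each $\wt\pi_\si$ restricts to a homomorphism on the dense subalgebra $U(\ff_k)$, the image $\rho(C^\infty_{\ff_k})$ is the closure of a subalgebra and hence a closed subalgebra; transporting the multiplication back through the homeomorphism $\rho$ produces a continuous multiplication on $C^\infty_{\ff_k}$ extending that of $U(\ff_k)$, which is Part~(A), and simultaneously exhibits $C^\infty_{\ff_k}$ as a closed subalgebra of a product of algebras $C^\infty(M_\si)\ptn\rT_{p_\si}$, which is Part~(B); membership in $\mathsf{PGL}$ follows from Proposition~\ref{PSDTPGL}.

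The main obstacle will be the explicit construction of the family $\{\wt\pi_\si\}$ together with the verification of topological injectivity for general~$k$. In contrast with the $\fa\ff_1$ example, whose commutator space is one-dimensional, the free case genuinely exercises the recovery of the full $C^\infty(X_\de)$-data, which forces the diagonals to sample enough $\ad$-weights and requires careful bookkeeping of how each PBW coordinate is encoded in, and reconstructed from, the matrix entries as one passes through the tensor algebra $T(V)$ and the basis $\{g_{\de\be}\}$. This combinatorial control is precisely what made the case $k\le2$ tractable in isolation and the general case worth deferring to the Appendix.
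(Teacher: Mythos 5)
Your plan follows the same route as the paper's Appendix: a family of triangular representations parametrized by points of manifolds, with each generator sent to a diagonal of affine coordinate functions plus a nilpotent part (compare \eqref{deft} and \eqref{defwte}; the paper packages the fibres as path algebras $\R Q_m$ of loop-free quivers, which are of polynomial growth by Proposition~\ref{patalnc} and hence subalgebras of some $\rT_p$ by Proposition~\ref{PGfd}), followed by extension of the resulting homomorphism to $C^\infty_{\ff_k}$, a degree-by-degree proof of topological injectivity exploiting lower-triangularity in the tensor degree (Lemma~\ref{annhn}), and transport of the multiplication back through the embedding. But two steps of your outline are unsupported as written. The extension step cannot rest on Theorem~\ref{fucani}: its domain is $C^\infty(\R^k)\ptn\R[[\lambda_{k+1},\dots,\lambda_m]]$, while $C^\infty_{\ff_k}=C^\infty(\R^k)\ptn [T](C^\infty(X))$ has infinitely many tensor factors that are spaces of genuine $C^\infty$-functions $C^\infty(X_\de)$, not formal power series; continuity in those variables is precisely what has to be proved. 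The paper obtains it from the free ordered calculus of \cite[Theorem~5.4]{ArNew} together with the explicit sampling formula of Lemma~\ref{manyQwekn}, which shows that each PBW coordinate $h_{\boldsymbol{\de}}$ enters the representation only through evaluation at points depending affinely on the parameters.

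The second gap is the closed-range argument. Your phrase ``surjectivity of the individual entry maps yields closedness of the range'' is imported from the two-generator deformation (Theorem~\ref{multCifgqua}), where the entry maps $\rho''_p$ are indeed surjective. For free $\ff_k$ this mechanism is unavailable: by \eqref{dedepr}, the block $C_m\to B_m$ factors as a composition operator induced by a surjective affine map (that part is topologically injective) followed by multiplication by the matrix $F=\bigl(\prod_{p=1}^m y_{\de_p\de'_{p}\de'_{p+1}p}\bigr)_{\boldsymbol{\de}'\boldsymbol{\de}}$ with polynomial entries. This multiplication is injective because $\det F$ is a nonzero polynomial, but it is not surjective, and closedness of its range is exactly the delicate point; the paper settles it with Tougeron's theorem that a submodule of a finitely generated free $C^\infty$-module generated by finitely many polynomials is closed \cite{Tou72}, after which the inverse mapping theorem for Fr\'echet spaces applies. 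Without this ingredient (or a substitute) your appeal to the inverse mapping theorem has nothing to act on. Finally, what you defer as ``the main obstacle'' --- choosing the diagonals and nilpotent parts so that the resulting matrices $F$ are generically invertible for all $m$ and all multi-indices --- is the actual content of Lemmas~\ref{teofg}--\ref{annhn}; so your text is a correct outline of the paper's own strategy, but not yet a proof.
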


The conclusion that $C^\infty_{\ff_k}$ belongs to $\mathsf{DTS}$ is omitted in the statement of \cite[Theorem 5.6]{ArNew}, but in fact the proof given there also provides  this result for $k\le 2$.

\begin{thm}\label{fuctcafree}
\cite[Theorem 5.7]{ArNew}
Let $k\in \N$. Then the algebra $C^\infty_{\ff_k}$ together with the embedding $U(\ff_k)\to C^\infty_{\ff_k}$ is the envelope with respect to $\mathsf{PG}$, i.e., $\wh U(\ff_k)^{\,\mathsf{PG}}\cong \cF_{k}^{\,\mathsf{PG}} \cong C^\infty_{\ff_k}$.
\end{thm}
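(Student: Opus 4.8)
The plan is to take as given Theorem~\ref{multCiffk}, which already tells us that $C^\infty_{\ff_k}$ is a $\mathsf{PGL}$ algebra (indeed in $\mathsf{DTS}$) and that the embedding $\iota\!:U(\ff_k)\to C^\infty_{\ff_k}$ is a homomorphism with dense range. It therefore remains only to verify that the pair $(C^\infty_{\ff_k},\iota)$ enjoys the universal property of the envelope with respect to $\mathsf{PG}$; this yields $C^\infty_{\ff_k}\cong\wh U(\ff_k)^{\,\mathsf{PG}}$, while the isomorphism $\wh U(\ff_k)^{\,\mathsf{PG}}\cong\cF_{k}^{\,\mathsf{PG}}$ is already available, since $\cF_{k}^{\,\mathsf{PG}}$ is known to be the envelope of the free associative algebra $\R\langle x_1,\ldots,x_k\rangle\cong U(\ff_k)$. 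Because a $\mathsf{PGL}$ algebra is a projective limit of algebras in $\mathsf{PG}$ and a continuous homomorphism into a projective limit is a compatible family of homomorphisms into the factors, it suffices to show that every algebra homomorphism $\phi\!:U(\ff_k)\to B$ into a Banach algebra $B$ of polynomial growth extends to a continuous homomorphism on $C^\infty_{\ff_k}$. Uniqueness of such an extension is immediate from the density of $\iota(U(\ff_k))$, so the whole content lies in its existence.

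First I would record two structural facts about the images $a_i\!:=\phi(e_i)$. Each $a_i$ is of polynomial growth, while the images $\phi(g_{\de\be})$ of the commutator basis elements \eqref{basisg} lie in the ideal of $B$ generated by commutators; by commutativity modulo radical (Proposition~\ref{PGLcmR}) this ideal is contained in $\Rad B$, which is nilpotent, say $(\Rad B)^N=0$. Consequently the factor $[T](C^\infty(X))$ of $C^\infty_{\ff_k}$, built from the commutator directions, is sent into $\Rad B$, and every tensor-degree $\ge N$ in the formal tensor algebra \eqref{fotwal} is annihilated. Thus, although $C^\infty_{\ff_k}=C^\infty(\R^k)\ptn[T](C^\infty(X))$ is an infinite product, only the finitely many summands $C^\infty(\R^k)\ptn C^\infty(X)^{\ptn m}$ with $m<N$ can contribute to any given extension $\wh\phi$.

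Next I would construct $\wh\phi$ on each of these finitely many summands and sum them. On the degree-zero piece $C^\infty(\R^k)$ --- the completion of the commutative polynomial factor in the PBW decomposition \eqref{Ufdec} --- I would use the ordered $C^\infty$-functional calculus $f\mapsto f(a_1,\dots,a_k)$, which is a continuous linear map by Theorem~\ref{fucani}. On each factor $C^\infty(X_\de)=C^\infty(\R^l)$ occurring in a higher tensor-degree, I would interpret $f_\de$ through the map $\Psi_\de$ of \eqref{orcalPsi}, namely as the ordered functional calculus of the operators $\ad a_1,\dots,\ad a_l$ evaluated on the nilpotent element $[a_l,a_j]$; tensoring with the $C^\infty(\R^k)$-factor and concatenating then defines a continuous linear map on $C^\infty(\R^k)\ptn C^\infty(X)^{\ptn m}$ via the universal property of $\ptn$ together with Theorem~\ref{fucani}. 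Since $\wh\phi$ so defined agrees with $\phi$ on the dense subalgebra $U(\ff_k)$, and since multiplication on $C^\infty_{\ff_k}$ is separately continuous (Theorem~\ref{multCiffk}(A)) while $B$ is a topological algebra, multiplicativity passes from the dense subalgebra to all of $C^\infty_{\ff_k}$, so $\wh\phi$ is the required homomorphism.

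The main obstacle is the well-definedness of the operator calculus $\Psi_\de(f_\de)$ for $f_\de\in C^\infty(\R^l)$: the operators $\ad a_1,\dots,\ad a_l$ are of polynomial growth but do not commute, satisfying instead $[\ad a_i,\ad a_j]=\ad[a_i,a_j]$ with $[a_i,a_j]$ nilpotent. I expect to resolve this by exploiting that, modulo $\Rad B$, the elements $a_i$ commute, so each $\ad a_i$ preserves the finite filtration $\Rad B\supseteq(\Rad B)^2\supseteq\cdots\supseteq(\Rad B)^N=0$ and the induced operators commute on the associated graded. On each graded layer the ordered calculus collapses to a genuine joint $C^\infty$-functional calculus of commuting polynomial-growth operators (Theorem~\ref{fucani}), and one assembles $\Psi_\de(f_\de)([a_l,a_j])$ by ascending the finitely many layers, the correction terms at each step landing one filtration degree deeper. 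Verifying that this construction is compatible with the concatenation product of $[T](C^\infty(X))$ and reproduces $\phi$ on $U(\ff_k)$ is the technically demanding part, and it is precisely here that the detailed bookkeeping of the Appendix is needed.
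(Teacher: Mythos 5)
Your overall skeleton is the same as the paper's: the paper deduces Theorem~\ref{fuctcafree} from Theorem~\ref{multCiffk} together with a citation of Theorem~5.4 of \cite{ArNew}, which is exactly the statement you set out to re-derive --- that any tuple $b_1,\ldots,b_k$ in a $\mathsf{PGL}$ algebra $B$ induces a continuous \emph{linear} map $C^\infty_{\ff_k}\to B$ extending the homomorphism $U(\ff_k)\to B$, $e_j\mapsto b_j$ --- after which multiplicativity and uniqueness follow by density, precisely as in your second paragraph. So the only substantive difference is that you reconstruct the cited extension theorem instead of invoking it, and the reconstruction is where your argument has a genuine gap.

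The gap is in your handling of what you call the main obstacle, which is in fact illusory, while your proposed fix is the part that would not survive scrutiny. Theorem~\ref{fucani} is an \emph{ordered} calculus: it sends the commutative monomial $\lambda_1^{\beta_1}\cdots\lambda_m^{\beta_m}$ to the non-commutative ordered product $b_1^{\beta_1}\cdots b_m^{\beta_m}$ and imposes no commutativity on the $b_i$ at all, only polynomial growth of the first $k$ of them and nilpotency of the rest. Since $\Psi_\de$ in \eqref{orcalPsi} is defined exactly on ordered monomials $\ad e_1^{\beta_1}\cdots\ad e_l^{\beta_l}$, you can apply Theorem~\ref{fucani} directly in the Banach algebra $\mathcal{B}(B)$ of bounded operators on $B$: each $\ad a_i$ is of polynomial growth there, because $e^{is\,\ad a_i}$ is the operator $x\mapsto e^{isa_i}xe^{-isa_i}$, whence $\|e^{is\,\ad a_i}\|\le\|e^{isa_i}\|\,\|e^{-isa_i}\|$. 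This gives the continuous map $C^\infty(\R^l)\to\mathcal{B}(B)$ at once, and evaluation at the nilpotent element $[a_l,a_j]$ defines $\Psi_\de(f_\de)([a_l,a_j])$ with no further argument. By contrast, your filtration-by-powers-of-the-radical scheme cannot be run as stated: in a Banach algebra the powers $(\Rad B)^r$ for $r\ge 2$ need not be closed subspaces, so the ``graded layers'' on which you want a joint calculus of commuting polynomial-growth operators are not Banach spaces without additional work, and the inductive step (``correction terms landing one filtration degree deeper'') is not specified to the point of being checkable. A final point of attribution: the ``detailed bookkeeping of the Appendix'' proves Theorem~\ref{multCiffk} (the family of triangular representations and topological injectivity of $\rho$), not the compatibility of the extension with the concatenation product; that compatibility is the content of Theorem~5.4 of \cite{ArNew}, established in the earlier paper, and is obtained from the ordered calculus as above rather than from the Appendix.
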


Theorem~\ref{fuctcafree} easily follows from  Theorem~\ref{multCiffk} and Theorem~5.4
in~\cite{ArNew}, which asserts that a tuple  $b_1, \ldots, b_k$ of elements of a $\mathsf{PGL}$ algebra $B$ determines a linear continuous map $C^\infty_{\ff_k}\to B$ which extends
the homomorphism  $\te\!:U(\ff_k)\to B$ that maps $e_j$ to $b_j$ for each~$j$.

So the main purpose is to prove  Theorem~\ref{multCiffk}  for general $k$.
The central idea of the proof is to construct a homomorphism from $U(\ff_k)$ to
a $\mathsf{DTS}$  algebra admitting a linear extension to $C^\infty_{\ff_k}$ that is a topological isomorphism. In other words, we need a family of triangular representations of $\ff_k$ labelled by points of a union of manifolds and chosen so as to guarantee the topological injectivity. Because $\ff_k$  is free, there is a huge selection of representations but our goal is to choose a family that is not too large, if possible. To do this we first generalize some constructions in~\cite{ArNew}.

\subsection*{A family of triangular representations well behaved on the commutant}

Let $m\in\N$ and $Q_m$ be the quiver whose vertices are labelled by $(\de,p)$ with $\de\in\De$ and $p=1,\ldots,m+1$ and
such that  there is a unique edge with the source $(\de,p)$ and the target $(\de',p+1)$ for each
$\de,\de'\in\De$ and $p=1,\ldots,m$. Since $Q_m$ has finitely many vertices and arrows and no oriented cycles, the path algebra $\R Q_m$  is of polynomial growth by Proposition~\ref{patalnc} and we can use it for our purposes.

Denote the idempotent corresponding to $(\de,p)$ and the element corresponding to the edge connecting $(\de,p)$ and $(\de',p+1)$ by $q_{\de p}$ and $v_{\de\de' p}$, respectively. Put also $v_{\de\de' m+2}=0$.
We need the standard relations, which follow from the definition of $\R Q_m$:
\begin{equation}\label{relRQm}
q_{\de p}v_{\de\de' p}=v_{\de\de' p}\qquad\text{and} \qquad v_{\de\de' p}q_{\de', p+1}=v_{\de\de' p}\qquad(p=1,\ldots,m).
\end{equation}

Put $Z_m\!:=\R^{|\De|}\times\R^{m+1}\times\R^{k}$ and denote by $v$ be the sum of all $v_{\de\de' p}$ and consider the family of Lie algebra homomorphisms $\{\te_{\mathbf{t}}\!:\ff_k\to \R Q_m;\,\mathbf{t}=(t_{\de pi})\in Z_m\}$
defined by
\begin{equation}\label{deft}
\te_{\mathbf{t}}(e_i)\!:=\sum_{\de,p} t_{\de pi}q_{\de p}+v \qquad (i=1,\ldots,k).
\end{equation}

The following two lemmas describe the action of $\te_{\mathbf{t}}$ on $V$ and $T(V)$.
Let $I$ be the two-sided ideal in $\R Q_m$ generated by all $v_{\de\de' p}$. Put also
\begin{equation}\label{sdedep}
s_{\de\de' pi}\!:=t_{\de pi}-t_{\de', p+1,i}
\end{equation}
 and
\begin{equation}\label{fde}
y_{\de_0\de\de' p}\!:=s_{\de\de' pl_0}- s_{\de\de' pj_0}\qquad(\de_0=(l_0,j_0)).
\end{equation}

\begin{lm}\label{teofg}
Let $\de_0=(l_0,j_0)\in\De$
and $\be=(\be_1,\ldots,\be_{l_0})\in\Z_+^{l_0}$. Then
\begin{equation}\label{gbete}
\te_{\mathbf{t}}(g_{\de_0\be})\in I\qquad\text{and}\qquad
\te_{\mathbf{t}}(g_{\de_0\be})-\sum_{\de,\de',p} y_{\de_0\de\de' p}\,s_{\de\de' p1}^{\be_1}\cdots s_{\de\de' pl_0}^{\be_{l_0}}\,v_{\de\de' p}\in I^2
\end{equation}
for every $\mathbf{t}$.
\end{lm}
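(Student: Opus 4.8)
The plan is to exploit the very special structure of $\te_{\mathbf{t}}$. Writing $a_i:=\te_{\mathbf{t}}(e_i)=D_i+v$, where $D_i:=\sum_{\de,p}t_{\de pi}q_{\de p}$ is a linear combination of the pairwise orthogonal idempotents and $v=\sum_{\de,\de',p}v_{\de\de'p}$ is the sum of all edges, I first note that orthogonality of the idempotents forces $D_1,\dots,D_k$ to commute pairwise, whereas $[v,v]=0$; consequently $[a_i,a_{i'}]=[D_i-D_{i'},v]$. A direct computation using the relations~\eqref{relRQm} then yields the key identity $[D_i,v_{\de\de'p}]=s_{\de\de'pi}\,v_{\de\de'p}$; that is, $\ad D_i$ acts diagonally on the spanning set $\{v_{\de\de'p}\}$ of single edges, with eigenvalue $s_{\de\de'pi}$. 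This diagonalization is the engine of the whole argument.

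Next I would introduce the filtration $I=I^{(1)}\supseteq I^{(2)}\supseteq\cdots$, where $I^{(d)}$ is spanned by the paths of length at least~$d$; since $Q_m$ has no oriented cycle and only $m+1$ layers, this filtration terminates, and $I^{(2)}=I^2$. The point is that $\ad v$ raises path length by one, so $\ad v\,(I^{(d)})\subseteq I^{(d+1)}$, whereas $\ad D_i$ preserves length. Hence each $\ad a_i$ preserves every $I^{(d)}$ and, modulo $I^{(d+1)}$, acts as the diagonal operator $\ad D_i$. In particular, once a term enters $I^2$ it stays there under all further applications of the $\ad a_i$, and so can never feed back into the single-edge component.

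With this in hand the lemma is almost immediate. Because $g_{\de_0\be}$ is a left-nested commutator, we have $\te_{\mathbf{t}}(g_{\de_0\be})=(\ad a_1)^{\be_1}\cdots(\ad a_{l_0})^{\be_{l_0}+1}(a_{j_0})$. The innermost bracket is $[a_{l_0},a_{j_0}]=[D_{l_0}-D_{j_0},v]=\sum_{\de,\de',p}y_{\de_0\de\de'p}\,v_{\de\de'p}$, which lies in $I$ and already settles the case $\be=0$. Applying the remaining operators and reducing modulo $I^2$, each factor $\ad a_i$ simply multiplies the coefficient of $v_{\de\de'p}$ by the scalar $s_{\de\de'pi}$; tallying the $\be_i$ applications of each $\ad a_i$ (the extra application of $\ad a_{l_0}$ having been used to form the bracket) produces precisely the monomial $s_{\de\de'p1}^{\be_1}\cdots s_{\de\de'pl_0}^{\be_{l_0}}$. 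This gives both $\te_{\mathbf{t}}(g_{\de_0\be})\in I$ and the congruence in~\eqref{gbete}.

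The only genuinely delicate point, and the one I would state carefully, is the claim that reduction modulo $I^2$ turns the iterated $\ad$ into a diagonal, multiplicative operation on edge-coefficients. This is exactly what the filtration argument in the second step guarantees: the $\ad v$ contributions of $a_i=D_i+v$ land in strictly higher filtration and are invisible modulo $I^2$, while the diagonal action of $\ad D_i$ scales each $v_{\de\de'p}$ independently, so the scalars accumulate multiplicatively irrespective of the order in which the operators are applied. Beyond this conceptual core the argument is only index bookkeeping.
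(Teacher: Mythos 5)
Your proof is correct and takes essentially the same route as the paper: the paper's key step is exactly your diagonal action, stated there as the congruence $[\te_{\mathbf{t}}(e_i),v_{\de\de'p}]\equiv s_{\de\de'pi}\,v_{\de\de'p} \pmod{I^2}$, followed by the computation of the innermost bracket $\te_{\mathbf{t}}([e_{l_0},e_{j_0}])\equiv\sum y_{\de_0\de\de'p}\,v_{\de\de'p}$ and the same iteration. Your decomposition $a_i=D_i+v$ and the path-length filtration simply repackage as exact identities the mod-$I^2$ congruences the paper manipulates directly, so the difference is presentational rather than substantive.
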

\begin{proof}
It follows from \eqref{relRQm} and~\eqref{deft} that, for given $i$, $\de$, $\de'$ and $p$,
\begin{equation}\label{dedeprv}
[\te_{\mathbf{t}}(e_i),v_{\de\de' p}]-s_{\de\de' pi}v_{\de\de' p}\in I^2.
\end{equation}
Then for arbitrary tuple $(r_{\de\de' pi})$ of real numbers, we have
\begin{equation}\label{ldedeprv}
[\te_{\mathbf{t}}(e_i),\sum_{\de,\de',p} r_{\de\de' p}v_{\de\de' p}]-\sum_{\de,\de',p} s_{\de\de' pi}r_{\de\de' p}v_{\de\de'p}\in I^2.
\end{equation}
On the other hand, since all $q_{\de p}$ mutually commute, \eqref{deft} implies that
$$
\te_{\mathbf{t}}([e_{l_0},e_{j_0}])
- [\te_{\mathbf{t}}(e_{l_0}),v]-[v,\te_{\mathbf{t}}(e_{j_0})]\in I^2.
$$
It follows in particular that $\te_{\mathbf{t}}([e_{l_0},e_{j_0}])\in I$.
Recall that $g_{\de_0\be}$ is an iterated commutator; see~\eqref{notg}. Therefore $\te_{\mathbf{t}}(g_{\de_0\be})\in I$.

Using~\eqref{ldedeprv} with $r_{\de\de' pi}=1$, we obtain from~\eqref{dedeprv} and~\eqref{fde} that
\begin{equation}\label{elejte}
\te_{\mathbf{t}}([e_{l_0},e_{j_0}])
-\sum y_{\de_0\de\de' p}\, v_{\de\de' p}\in I^2.
\end{equation}
Using now~\eqref{ldedeprv}  with   $r_{\de\de' pi}=y_{\de_0\de\de' p}$ and next making iterations, we conclude that the second relation in~\eqref{gbete} is  implied by \eqref{elejte}.
\end{proof}

Put also
$$
v_{\boldsymbol{\de}}\!:=v_{\de_1\de_2 1}\cdots v_{\de_{m}\de_{m+1} m} \qquad(\boldsymbol{\de}=(\de_1,\ldots,\de_m)\in \De^m).
$$

Now, knowing how  $\te_{\mathbf{t}}$ acts on the basis of $V$ we can easily find the action on the basis of $T(V)$, i.e., on the products of the iterated commutators in~\eqref{basisg}.

\begin{lm}\label{manyQkn}
Let $m\in\N$, $\mathbf{t}=(t_{\de pi})\in Z_m$,  $\boldsymbol{\de}=(\de_1,\ldots,\de_m)\in \De^m$ with $\de_p=(l_p,j_p)$ and $\be^p=(\be^p_1,\ldots,\be^p_{l_p})$ for each $p=1,\ldots,m$.
Then
\begin{equation}\label{wtelawn}
\te_{\mathbf{t}}(g_{\be^1\de_1}\cdots g_{\be^m\de_m})= \sum_{\boldsymbol{\de}'\in\De^m}\prod_{p=1}^m y_{\de_p\de'_{p}\de'_{p+1}p}\left(\prod_{i=1}^{l_p} s_{\de'_p\de'_{p+1}pi}^{\be_i^p}\right)\,v_{\boldsymbol{\de}'},
\end{equation}
where $\boldsymbol{\de}'=(\de'_1,\ldots,\de'_m)$.
\end{lm}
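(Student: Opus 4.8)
The plan is to combine the fact that $\te_{\mathbf{t}}$ extends to an algebra homomorphism $U(\ff_k)\to\R Q_m$ with the nilpotency of the arrow ideal $I$. First I would use multiplicativity to write
\[
\te_{\mathbf{t}}(g_{\be^1\de_1}\cdots g_{\be^m\de_m})=\prod_{p=1}^{m}\te_{\mathbf{t}}(g_{\be^p\de_p}),
\]
and then invoke Lemma~\ref{teofg} to record, for each $p$, the decomposition $\te_{\mathbf{t}}(g_{\be^p\de_p})=L_p+R_p$, where $L_p$ is the explicit $v$-linear part appearing in~\eqref{gbete} (a sum over edges at all levels) and $R_p\in I^2$.

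The conceptual heart of the argument is that $Q_m$ is layered: every arrow runs from a vertex at level $p$ to one at level $p+1$, so the longest path has length $m$ and hence $I^{m+1}=0$. Expanding $\prod_{p=1}^m(L_p+R_p)$, any summand that contains at least one factor $R_p\in I^2$ lands in $I^{m+1}=0$, since one such factor already raises the $I$-degree to $2+(m-1)=m+1$. Therefore the product collapses \emph{exactly} to $\prod_{p=1}^m L_p$, with no remainder term; this is what makes the asserted formula an equality rather than a congruence modulo higher powers of $I$.

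It then remains to multiply out $\prod_{p=1}^m L_p$ using the path-algebra relations~\eqref{relRQm}, for which the composability rule $v_{\de\de'p}\,v_{\eta\eta'p''}\ne 0\iff(p''=p+1\text{ and }\eta=\de')$ does all the bookkeeping. Choosing one edge from each $L_p$, a nonzero product forces the selected edges to form a composable path of length $m$; the only such path traverses vertex-levels $1,2,\dots,m+1$, so the $p$-th factor must contribute its level-$p$ term and the target of each edge must equal the source of the next. Labelling the resulting chain of sources and targets by $\de'_1,\dots,\de'_{m+1}$, the surviving edge product is exactly $v_{\boldsymbol{\de}'}=v_{\de'_1\de'_2 1}\cdots v_{\de'_m\de'_{m+1}m}$, while the coefficients collected from the $L_p$ multiply to $\prod_{p=1}^{m}y_{\de_p\de'_p\de'_{p+1}p}\bigl(\prod_{i=1}^{l_p}s_{\de'_p\de'_{p+1}pi}^{\be_i^p}\bigr)$. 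Summing over all admissible chains yields the identity~\eqref{wtelawn}.

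I expect the only delicate point to be the index bookkeeping in this final step: keeping straight that consecutive edges share an internal vertex (so each intermediate $\de'_p$ is summed only once) while the two endpoints $\de'_1$ and $\de'_{m+1}$ remain free, so that the summation range matches the definition of $v_{\boldsymbol{\de}'}$. The layered-quiver observation $I^{m+1}=0$ is the clean structural input that kills every cross term at once and turns what could have been an inductive mod-$I^{m+1}$ estimate into an exact finite computation.
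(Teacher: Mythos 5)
Your proposal is correct and takes essentially the same route as the paper's proof: factor $\te_{\mathbf{t}}$ multiplicatively, expand each factor via Lemma~\ref{teofg}, and observe that upon multiplying the linear parts only composable chains of arrows running through levels $1,\dots,m+1$ survive, yielding~\eqref{wtelawn}. If anything, your explicit appeal to the layered structure of $Q_m$ (so that $I^{m+1}=0$ annihilates every cross term containing a factor from $I^2$) is more careful than the paper's own wording, which passes from a congruence modulo a power of $I$ to the exact equality without spelling this step out.
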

\begin{proof}
It is obvious that
$$
\te_{\mathbf{t}}(g_{\be^1\de_1}\cdots g_{\be^m\de_m}) =\te_{\mathbf{t}}(g_{\be^1\de_1})\cdots\te_{\mathbf{t}}(g_{\be^m\de_m}).
$$
Now we apply Lemma~\ref{teofg} for each $\de_p$.
Since $\te_{\mathbf{t}}(g_{\de_p\be})\in I$,  the product on the right-hand side is equal modulo $I^2$ to the product of $m$ sums of the form given in~\eqref{gbete}.

Multiplying the sums, we have that only coefficients of terms of the form
$$
v_{\de'_1\de'_2 1}\cdots v_{\de'_{m}\de'_{m+1} m}
$$
(which we denote by $v_{\boldsymbol{\de}'}$) do not vanish. Thus~\eqref{gbete} implies~\eqref{wtelawn}.
\end{proof}

\subsection*{A family with additional parameters}

Now we construct a more general family of representations and describe the action on the whole $U(\ff_k)$.

For $m\in\N$, $\boldsymbol{\la}=(\la_1,\ldots,\la_k)\in\R^k$ and $\mathbf{t}=(t_{\de pi})\in Z_m$ put \begin{equation}\label{defwte}
\wt\te_{\boldsymbol{\la},\mathbf{t}}\!:\ff_k\to \R Q_m\!: e_i\mapsto \la_i+\te_{\mathbf{t}}(e_i)\qquad (i=1,\ldots,k).
\end{equation}

To include the case of $m=0$ into consideration denote by $Q_0$ the $1$-vertex graph. It is obvious that  $\R Q_0\cong \R$. We put $\wt\te_{\boldsymbol{\la}}\!:e_i\mapsto \la_i$. (Note that that $\De$ is empty in this case.)

Write $U(\ff_k)$ in the form~\eqref{Ufdec}. We need an explicit form of this representation. Let $\boldsymbol{\mu}_{\boldsymbol{\de}}=(\mu_{pi})\in X_{\de_1}\times\cdots\times X_{\de_m}$, where $\boldsymbol{\de}=(\de_1,\ldots,\de_m)$, $p=1,\ldots m$  and  $i=1,\ldots l_p$. It follows from \eqref{Ufdecee} that each element of $U(\ff_k)$ can be written uniquely  as a finite sum
$$
\sum_{m=0}^\infty \sum_{\boldsymbol{\de}\in\De^m} \Phi_{\boldsymbol{\de}}(h_{\boldsymbol{\de}}),
$$
where
\begin{multline}\label{Phimdefk2n}
 \Phi_{\boldsymbol{\de}}\!: \R[\boldsymbol{\la},\boldsymbol{\mu}_{\boldsymbol{\de}}]\to U(\ff_k)\!:\\
 f_0\otimes f_{\de_1}\otimes  \cdots\otimes f_{\de_m} \mapsto \Psi_\emptyset(f_0)\Psi_{\de_1}(f_{\de_1})([e_{l_1},e_{j_1}])\cdots \Psi_{\de_m}(f_{\de_m})([e_{l_m},e_{j_m}]).
\end{multline}
Here $\Psi_\emptyset$ is the ordered calculus $\R[\boldsymbol{\la}]\to U(\ff_k)\!:\la_i\mapsto e_i$ and $\Psi_\de$ is defined in~\eqref{orcalPsi} (in particular, $\Phi_\emptyset=\Psi_\emptyset$).

Given $\boldsymbol{\de}=(\de_1,\ldots,\de_m)\in \De^m$ with $\de_p=(l_p,j_p)$ and $\boldsymbol{\de}'=(\de'_1,\ldots,\de'_m)\in\De^m$, put
\begin{equation}\label{sdel}
\mathbf{s}_{\boldsymbol{\de'\de}}\!:=(s_{\de'_p\de'_{p+1} pi})\qquad (p=1,\ldots,m,\,i=1,\ldots,l_p).
\end{equation}
(Note that not all available index values are included here.)

\begin{lm}\label{manyQwekn}
Let $\boldsymbol{\la}=(\la_1,\ldots,\la_k)\in\R^k$.

\emph{(A)}~Suppose that the hypotheses of Lemma~\ref{manyQkn} are satisfied and
$h\in\R[\boldsymbol{\la},\boldsymbol{\mu}_{\boldsymbol{\de}}]$. Then
\begin{equation}\label{PhimtrQmmann}
\wt\te_{\boldsymbol{\la},\mathbf{t}}(\Phi_{\boldsymbol{\de}}(h))=\sum_{\boldsymbol{\de}'\in\De^m}\left(\prod_{p=1}^m y_{\de_p\de'_{p}\de'_{p+1}p}\right) h(\la_1+t_{\de'_1 11},\ldots,\la_k+t_{\de'_1 1k},\,\mathbf{s}_{\boldsymbol{\de'\de}}) v_{\boldsymbol{\de}'}.
\end{equation}

\emph{(B)}~If $m=0$ and  $h\in\R[\boldsymbol{\la}]$, then
$\wt\te_{\boldsymbol{\la}}(\Phi_\emptyset(h))= h(\la_1,\ldots,\la_k)$.
\end{lm}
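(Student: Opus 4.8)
The plan is to prove~(A) by applying the algebra homomorphism $\wt\te_{\boldsymbol{\la},\mathbf{t}}$ to the factored description~\eqref{Phimdefk2n} of $\Phi_{\boldsymbol{\de}}(h)$, and to read off~(B) as the degenerate case $m=0$. Since both sides of~\eqref{PhimtrQmmann} are linear in $h$, I would first reduce to a decomposable element $h=f_0\otimes f_{\de_1}\otimes\cdots\otimes f_{\de_m}$, for which
\[
\Phi_{\boldsymbol{\de}}(h)=\Psi_\emptyset(f_0)\,\prod_{p=1}^m \Psi_{\de_p}(f_{\de_p})([e_{l_p},e_{j_p}]).
\]
Each commutator factor $\Psi_{\de_p}(f_{\de_p})([e_{l_p},e_{j_p}])$ is a linear combination of the iterated commutators $g_{\de_p\be}$ and hence lies in $[\ff_k,\ff_k]$; on such Lie elements the central summands $\la_i$ in~\eqref{defwte} cancel in every bracket, so $\wt\te_{\boldsymbol{\la},\mathbf{t}}$ coincides there with $\te_{\mathbf{t}}$. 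Expanding each $f_{\de_p}$ into monomials and resumming, Lemma~\ref{manyQkn} then computes the image of the product of commutator factors as $\sum_{\boldsymbol{\de}'}\prod_{p=1}^m y_{\de_p\de'_p\de'_{p+1}p}\,f_{\de_p}\bigl(s_{\de'_p\de'_{p+1}p1},\ldots,s_{\de'_p\de'_{p+1}pl_p}\bigr)\,v_{\boldsymbol{\de}'}$, i.e. with the monomial $\prod_i s_{\de'_p\de'_{p+1}pi}^{\be^p_i}$ in~\eqref{wtelawn} replaced by the corresponding block of values $\mathbf{s}_{\boldsymbol{\de'\de}}$ from~\eqref{sdel}. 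In particular this image lies in $I^m$, and since $I^{m+1}=0$ it is determined by its $I^m/I^{m+1}$-component spanned by the paths $v_{\boldsymbol{\de}'}$.

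Next I would treat the diagonal prefix. Writing $\wt\te_{\boldsymbol{\la},\mathbf{t}}(e_i)=D_i+v$ with $D_i:=\sum_{\de,p}(\la_i+t_{\de pi})q_{\de p}$ and using $\sum_{\de,p}q_{\de p}=1$, one gets $\wt\te_{\boldsymbol{\la},\mathbf{t}}(\Psi_\emptyset(f_0))\equiv\sum_{\de,p}f_0(\la_1+t_{\de p1},\ldots,\la_k+t_{\de pk})\,q_{\de p}\pmod I$, the $f_0$-calculus on the mutually orthogonal idempotents $q_{\de p}$. Left-multiplying the previously computed product by this prefix, every contribution of the $I$-part of $\Psi_\emptyset(f_0)$ is pushed into $I^{m+1}=0$, so only the idempotent part survives; and by~\eqref{relRQm} one has $q_{\de p}v_{\boldsymbol{\de}'}=v_{\boldsymbol{\de}'}$ exactly when $(\de,p)$ is the source $(\de'_1,1)$ of $v_{\boldsymbol{\de}'}$, whence the prefix collapses to the single value $f_0(\la_1+t_{\de'_1 11},\ldots,\la_k+t_{\de'_1 1k})$. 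Multiplying this by the $f_{\de_p}$-values and recognising the result as $h$ evaluated at $(\la_1+t_{\de'_1 11},\ldots,\la_k+t_{\de'_1 1k},\mathbf{s}_{\boldsymbol{\de'\de}})$ gives~\eqref{PhimtrQmmann}. Part~(B) is then immediate: for $m=0$ one has $\R Q_0\cong\R$, $\De=\emptyset$, $\wt\te_{\boldsymbol{\la}}(e_i)=\la_i$ and $\Phi_\emptyset=\Psi_\emptyset$, so $\wt\te_{\boldsymbol{\la}}(\Phi_\emptyset(h))=h(\la_1,\ldots,\la_k)$ directly.

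The main obstacle is expected to be the order-bookkeeping in the nilpotent filtration rather than any conceptual point. Two places require care. First, Lemma~\ref{teofg} controls each commutator factor only modulo $I^2$, so one must check that in the $2^m$-term expansion of the product every term involving at least one $I^2$-error lands in $I^{m+1}=0$; this follows from grading $\R Q_m$ by path length together with $I^{m+1}=0$, and leaves precisely the leading product evaluated by Lemma~\ref{manyQkn}. Second, one must confirm that left-multiplication by the non-idempotent part of $\Psi_\emptyset(f_0)$ genuinely raises the path length past $m$. Both are routine consequences of nilpotency, but the conventions for the vertex labels $\de'_1,\ldots,\de'_{m+1}$ of the paths $v_{\boldsymbol{\de}'}$ must be matched carefully against~\eqref{fde} and~\eqref{sdel} to land on exactly the coefficients $y_{\de_p\de'_p\de'_{p+1}p}$ and arguments $\mathbf{s}_{\boldsymbol{\de'\de}}$ appearing in~\eqref{PhimtrQmmann}.
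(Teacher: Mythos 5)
Your proof is correct and follows essentially the same route as the paper's: reduce by linearity to decomposable/monomial $h$, apply Lemma~\ref{manyQkn} to the commutator factors (on which $\wt\te_{\boldsymbol{\la},\mathbf{t}}$ and $\te_{\mathbf{t}}$ agree since the scalars $\la_i$ cancel in brackets), and collapse the polynomial prefix against each path $v_{\boldsymbol{\de}'}$ via $q_{\de p}v_{\boldsymbol{\de}'}=v_{\boldsymbol{\de}'}$ exactly when $(\de,p)=(\de'_1,1)$, with all stray terms killed by nilpotency. The only cosmetic difference is that the paper records the prefix action as the eigenvalue relation $\wt\te_{\boldsymbol{\la},\mathbf{t}}(e_i)v_{\boldsymbol{\de}'}=(\la_i+t_{\de'_1 1i})v_{\boldsymbol{\de}'}$ and iterates it on monomials, whereas you compute the full prefix image modulo $I$ by idempotent calculus and then discard the error term inside $I^{m+1}=0$.
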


\begin{proof}
(A)~Suppose first that  $h$ is a power product in $\R[\boldsymbol{\la},\boldsymbol{\mu}_{\boldsymbol{\de}}]$, i.e.,
$$
h=\la_1^{\al_1}\cdots\la_k^{\al_k} \prod_{p=1}^{m} \prod_{i=1}^{l_p}\mu_{pi}^{\be^p_i}.
$$
for some $\al_1,\ldots,\al_k$ and  $\be^p=(\be^p_1,\ldots,\be^p_{l_p})$.
By the definition of $\Phi_{\boldsymbol{\de}}$, we have  that
$$
\Phi_{\boldsymbol{\de}}(h)= e_1^{\al_1}\cdots e_k^{\al_k} g_{\be^1\de_1}\cdots g_{\be^m\de_m}.
$$

Note that $q_{\de p}v_{\de_1'\de_2' 1}=v_{\de_1'\de_2' 1}$ when $\de=\de_1'$ and $p=1$ and equals $0$ otherwise; see~\eqref{relRQm}. So, in view of $vv_{\boldsymbol{\de}'}=0$, we have from \eqref{deft} and~\eqref{defwte} that for every $\boldsymbol{\de}'\in\De^m$
$$
\wt\te_{\boldsymbol{\la},\mathbf{t}}(e_i)v_{\boldsymbol{\de}'}=(\la_i+t_{\de'_1 1i})v_{\boldsymbol{\de}'}.
$$
Therefore,
$$
\wt\te_{\boldsymbol{\la},\mathbf{t}}(e_1^{\al_1}\cdots e_k^{\al_k})v_{\boldsymbol{\de}'}=(\la_1+t_{\de'_1 11})^{\al_1}\cdots(\la_k+t_{\de'_1 1k})^{\al_k}v_{\boldsymbol{\de}'}.
$$
Note also that $\wt\te_{\boldsymbol{\la},\mathbf{t}}(a)=\te_{\mathbf{t}}(a)$ for every $a\in [\ff_k,\ff_k]$. Then by Lemma~\ref{manyQkn},
$$
\wt\te_{\boldsymbol{\la},\mathbf{t}}(\Phi_{\boldsymbol{\de}}(h))=\prod_{i'=1}^{k} (\la_{i'}+t_{\de'_1 1{i'}})^{\al_{i'}}  \left(\sum_{\boldsymbol{\de}'\in\De^m}\prod_{p=1}^m y_{\de_p\de'_{p}\de'_{p+1}p}\left(\prod_{i=1}^{l_p} s_{\de'_p\de'_{p+1}pi}^{\be_i^p}\right)\right)v_{\boldsymbol{\de}'}.
$$
Thus \eqref{PhimtrQmmann} holds for monomials. By the linearity of both sides of the equality, it also  holds for every $h\in\R[\boldsymbol{\la},\boldsymbol{\mu}_{\boldsymbol{\de}}]$.

The argument for Part~(B) is straightforward and so is omitted.
\end{proof}

We also need the simple case, where the number of factors is greater than~$m$.

\begin{lm}\label{annhn}
Let $\breve{m}\in\N$, $\breve{\boldsymbol{\de}}\in\De^{\breve{m}}$, $\boldsymbol{\la}\in\R^k$ and $h\in\R[\boldsymbol{\la},\boldsymbol{\mu}_{\breve{\boldsymbol{\de}}}]$.
Suppose that $m<\breve{m}$.
Then $\wt\te_{\boldsymbol{\la},\mathbf{t}}(\Phi_{\breve{\boldsymbol{\de}}}(h))=0$ for all  $\mathbf{t}\in Z_m$. If $m=0$,  then $\wt\te_{\boldsymbol{\la}}(\Phi_{\breve{\boldsymbol{\de}}}(h))=0$.
\end{lm}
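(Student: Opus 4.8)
The plan is to exploit the nilpotency of the arrow ideal $I$ in the finite-dimensional path algebra $\R Q_m$. Recall that $I$ is generated by the edge elements $v_{\de\de' p}$, so it coincides with the arrow ideal of $\R Q_m$; consequently $I^n$ is spanned by the paths of length at least~$n$. Since the vertices of $Q_m$ are arranged in $m+1$ layers and every edge passes from layer~$p$ to layer~$p+1$, the longest path in $Q_m$ has length~$m$, whence $I^{m+1}=0$. For $m=0$ the quiver has a single vertex and no edges, so $I=0$ and $\R Q_0\cong\R$ is commutative; this case will be subsumed by the general argument.

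First I would reduce to the case when $h$ is a single power product, using the linearity of both $\Phi_{\breve{\boldsymbol{\de}}}$ and $\wt\te_{\boldsymbol{\la},\mathbf{t}}$. For such an~$h$, the definition~\eqref{Phimdefk2n} of $\Phi_{\breve{\boldsymbol{\de}}}$ gives
\begin{equation*}
\Phi_{\breve{\boldsymbol{\de}}}(h)=e_1^{\al_1}\cdots e_k^{\al_k}\,g_{\be^1\de_1}\cdots g_{\be^{\breve{m}}\de_{\breve{m}}}
\end{equation*}
for suitable exponents, that is, a product of $k$ powers of generators followed by $\breve{m}$ iterated commutators.

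The key step is that each commutator factor lands in~$I$. Indeed, every $g_{\be^p\de_p}$ lies in $[\ff_k,\ff_k]$ by its definition~\eqref{notg} as an iterated commutator, and $\wt\te_{\boldsymbol{\la},\mathbf{t}}$ agrees with $\te_{\mathbf{t}}$ on $[\ff_k,\ff_k]$; by Lemma~\ref{teofg} we have $\te_{\mathbf{t}}(g_{\be^p\de_p})\in I$. Since $\wt\te_{\boldsymbol{\la},\mathbf{t}}$ is an algebra homomorphism, the image of the product of the $\breve{m}$ commutators lies in $I^{\breve{m}}$, and because $I^{\breve{m}}$ is a two-sided ideal the extra left factor $\wt\te_{\boldsymbol{\la},\mathbf{t}}(e_1^{\al_1}\cdots e_k^{\al_k})$ keeps the whole expression in $I^{\breve{m}}$. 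Finally, $\breve{m}>m$ forces $I^{\breve{m}}\subseteq I^{m+1}=0$, so $\wt\te_{\boldsymbol{\la},\mathbf{t}}(\Phi_{\breve{\boldsymbol{\de}}}(h))=0$; the case $m=0$ is identical with $I=0$ (or, directly, each commutator maps to a commutator in the commutative algebra $\R$, hence to~$0$).

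I do not expect a serious obstacle here: once the identification of $I$ with the arrow ideal and the bound $I^{m+1}=0$ are in place, the statement is immediate. The only point requiring slight care is that the scalar-and-diagonal part of $\wt\te_{\boldsymbol{\la},\mathbf{t}}(e_i)$ need not lie in~$I$, so one must use that $I^{\breve{m}}$ is a genuine two-sided ideal—not merely that the commutator images are in~$I$—in order to absorb the leading $e$-factors.
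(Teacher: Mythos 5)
Your proof is correct and follows essentially the same route as the paper: both rest on the decomposition $\Phi_{\breve{\boldsymbol{\de}}}(h)=e_1^{\al_1}\cdots e_k^{\al_k}\,g_{\be^1\de_1}\cdots g_{\be^{\breve{m}}\de_{\breve{m}}}$ (after reducing to monomials), on Lemma~\ref{teofg} placing each $\wt\te_{\boldsymbol{\la},\mathbf{t}}(g_{\be^p\de_p})$ in the arrow ideal~$I$, and on the layered structure of $Q_m$ forcing any product of $\breve{m}>m$ arrows to vanish. The only cosmetic difference is that the paper expands the product into explicit terms $v_{\de_1\de_2 1}\cdots v_{\de_{\breve{m}}\de_{\breve{m}+1}\breve{m}}$ using the second relation of~\eqref{gbete}, whereas you package the same vanishing more cleanly as the nilpotency bound $I^{\breve{m}}\subseteq I^{m+1}=0$, needing only the first inclusion of~\eqref{gbete}.
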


\begin{proof}
It follows from~\eqref{Phimdefk2n} that $\Phi_{\breve{\boldsymbol{\de}}}(h)$ is a sum of products of $\breve{m}+1$ factors, where all factors except first are of the form $g_{\be\de}$. It follows from~\eqref{gbete} that after applying of $\wt\te_{\boldsymbol{\la},\mathbf{t}}$ each of the summands contains a product of the form $v_{\de_1\de_2 1}\cdots v_{\de_{\breve{m}}\de_{\breve{m}+1} \breve{m}}$, which vanishes since $\breve{m}>m$.
\end{proof}

\subsection*{Proof of the structural theorem}

\begin{proof}[Proof of Theorem~\ref{multCiffk}]
We prove Parts (1) and (2) simultaneously. Since $\mathsf{DTS}$ is stable under passing to closed subalgebras, it suffices to construct a homomorphism from $U(\ff_k)$  to a $\mathsf{DTS}$  algebra admitting a linear extension to $C^\infty_{\ff_k}$ that is a topological isomorphism.

For given $m\in \N$ put
$$
B_m\!:=C^\infty(\R^k\times Z_m,\R Q_m),
$$
where $Z_m\!:=\R^{|\De|(m+1)k}$. (The case $Z_0=\emptyset$ is included.)
Consider the family
$$
\{\wt\te_{\boldsymbol{\la},\mathbf{t}}\!:U(\ff_k)\to \R Q_m\!:\,\boldsymbol{\la}\in\R^k,\,\mathbf{t}\in Z_m\}
$$
of homomorphisms defined in~\eqref{defwte}. Similarly, for $m=0$ consider  the family
$$
\{\wt\te_{\boldsymbol{\la}}\!:U(\ff_k)\to C^\infty(\R^k)\!:\,\boldsymbol{\la}\in\R^k\}.
$$
Varying $\boldsymbol{\la}$ and $\mathbf{t}$ we have the homomorphism
$$
\pi_m\!:U(\ff_k)\to B_m\!:\,
\pi_{m}(a)(\boldsymbol{\la},\mathbf{t})\!:= \wt\te_{\boldsymbol{\la},\mathbf{t}}(a)\qquad (a\in U(\ff_k))
$$
for $m\ge 1$ and similarly for $m=0$.

Consider also the homomorphism
$$
\rho\!:U(\ff_k)\to \prod_{m=0}^{\infty} B_m \!:a\mapsto (\pi_m(a)).
$$
Since $\prod B_m$ is a $\mathsf{PGL}$ algebra, it follows from \cite[Theorem~5.4
]{ArNew} that $\rho$ extends to a continuous linear map from $C^\infty_{\ff_k}$ to $\prod B_m$.
By Definition~\ref{PSDTde}, every closed subalgebra of $\prod B_m$  belongs to $\mathsf{DTS}$.
Thus to complete the proof we need to show that the extension of $\rho$ is topologically injective.

By the definition (see \eqref{defCifk} and~\eqref{cinX}), $C^\infty_{\ff_k}=C^\infty(\R^k)\ptn [T](C^\infty(X))$, where $X=\coprod X_\de$.
It follows from the explicit description of the formal tensor algebra  that  $C^\infty_{\ff_k}=\prod_{m=0}^\infty C_m$, where $C_m\!:=C^\infty(\R^k\times X^m)$; see~\eqref{fotwal}. Then $\rho$ can be written as $\prod C_m \to \prod B_m$. It follows from Lemma~\ref{annhn} that $\rho$ is determined by a lower triangular infinite matrix and so, by \cite[Lemma 5.12]{ArNew}, it is sufficient to check that $C_m\to B_m$ is topologically injective for every $m$.

Note first that $C_0\to B_0$ is a topological isomorphism.
Assume next that $m\ge 1$. We treat $B_m$ as a free $C^\infty(\R^k\times Z_m)$-module and denote by $Y_m$ the submodule $\sum_{\boldsymbol{\de}' \in \De^m}B_m v_{\boldsymbol{\de}'}$. It is obvious that $Y_m$ is a direct factor in $B_m$. So  it suffices to show that the composition of $\pi_m$ and the projection of the whole $C^\infty_{\ff_k}$ onto $Y_m$ is topologically injective; see, e.g., Part~(B) of \cite[Lemma 4.4]{ArNew}. We denote this composition by $\pi'_m$.

Lemma~\ref{manyQwekn} asserts that~\eqref{PhimtrQmmann} holds for every  $\boldsymbol{\de}\in \De^m$ and every $h_{\boldsymbol{\de}}\in\R[\boldsymbol{\la},\boldsymbol{\mu}_{\boldsymbol{\de}}]$. The existence of an extension to an ordered $C^\infty$-calculus given by \cite[Theorem 5.4]{ArNew} implies that \eqref{PhimtrQmmann}  holds for every $h_{\boldsymbol{\de}}\in C^\infty(\R^k\times X_{\de_1}\times\cdots\times X_{\de_m})$. Note that the cardinality of $\De^m$ equals $m|\De|$ and identify  $Y_m$ with $C^\infty(\R^k\times Z_m)^{m|\De|}$. 
Identifying also $X^m$ with $\coprod_{\boldsymbol{\de}\in\De^m} (X_{\de_1}\times\cdots\times X_{\de_m})$ and $C_m$ with $\prod_{\boldsymbol{\de}\in\De^m} C^\infty(\R^k\times X_{\de_1}\times\cdots\times X_{\de_m})$, we conclude that $\pi'_m$ has the form
\begin{equation}\label{dedepr}
(h_{\boldsymbol{\de}})\mapsto\left((\boldsymbol{\la},\mathbf{t})\mapsto \sum_{\boldsymbol{\de}} \left(\prod_{p=1}^m y_{\de_p\de'_{p}\de'_{p+1}p}\right) h_{\boldsymbol{\de}}(\la_1+t_{\de'_1 11},\ldots,\la_k+t_{\de'_1 1k},\,\mathbf{s}_{\boldsymbol{\de'\de}})\right)_{\boldsymbol{\de}'},
\end{equation}
where
$$
y_{\de_p\de'_{p}\de'_{p+1}p}=t_{\de'_p, p, l_{\de_p}}-t_{\de'_{p+1}, p+1, l_{\de_p}}-t_{\de'_p, p, j_{\de_p}}+t_{\de'_{p+1}, p+1, j_{\de_p}} \qquad (\de_p=(l_p,j_p))
$$
by~\eqref{sdedep} and~\eqref{fde}, and 
$$
\mathbf{s}_{\boldsymbol{\de'\de}}=(t_{\de pi}-t_{\de', p+1,i})\qquad (p=1,\ldots,m,\,i=1,\ldots,l_p)
$$
by~\eqref{sdel}.
The map in~\eqref{dedepr} is the composition of the map $C_m\to Y_m$ induced by a finite-dimensional linear operator and the multiplication by the matrix $F\!:=\left(\prod_{p=1}^m y_{\de_p\de'_{p}\de'_{p+1}p}\right)_{\boldsymbol{\de}'\boldsymbol{\de}}$. (Here we fix a linear order on the set~$\De^m$.)

The linear operator that induces the first map has the form
$$
((\la_{i'}),(t_{\de pi}))\mapsto ((\la_{i'}+t_{\de'_1 1{i'}}),(t_{\de'_p pi}-t_{\de'_{p+1}, p+1,i})),
$$
where $i'=1,\ldots, k$, $p=1,\ldots,m$ and  $i=1,\ldots, l_p$. It is easy to see that this operator is surjective. Therefore the first map is topologically injective.

The multiplication by $F$ is injective since every  tuple of functions that it takes to $0$ obviously vanishes at the points, where $\det F\ne 0$, and  also vanishes at the remaining points by continuity (since  $\det F$ is a polynomial). Also,  the range of this map is a submodule of a finitely generated free $C^\infty(\R^k\times Z_m)$-module and, moreover, this submodule generated by a finite tuple of  polynomials. It is well known that each such submodule is closed; see, e.g.,  \cite[Chapitre\,VI, p.\,119, Corollaire~1.5]{Tou72}. Thus, we have a topological isomorphism onto the range by the inverse mapping theorem for Fr\'echet spaces. Being a composition of two topologically injective maps, $\pi'_m\!:C_m\to Y_m$ is topologically injective and so is $C_m\to B_m$.

Thus $C_m\to B_m$ is topologically injective for every $m$. As noted above, this implies that $\rho\!:C^\infty_{\ff_k}\to\prod B_m$ is also topologically injective and this completes the proof.
\end{proof}

\begin{rem}
In \cite{ArNew} the author proved Theorem~\ref{multCiffk} for $k\le2$. Look at the case $k=2$.
Compared to the proof in the general case given here, \cite{ArNew} contains the following simplifications. First, it is easy to see that $X=\R^2$ and $\R Q_m\cong \rT_{m+1}$. The second simplification is a technical one. Note that both $\te_{\mathbf{t}}(e_1)$ and $\te_{\mathbf{t}}(e_2)$ defined in~\eqref{deft} can be written as sums of a diagonal matrix and a Jordan block with zero eigenvalue. But in \cite[eq.~(5.9)]{ArNew}  a shorter formula was used: the image of $e_1$ is assumed to be diagonal. The rest of the argument is very similar to that contained here.
\end{rem}

\begin{rem}
Recall that the \emph{Whitney ideal} $W_{Y/\R^m}$ corresponding to a closed subset $Y$ of $\R^m$ is the set of all functions in $C^\infty(\R^m)$ that are flat on~$Y$, i.e., for each function in $W_{Y/\R^m}$ there is an open neighbourhood of~$Y$ such that all partial derivatives vanish at every point. Elements of the $C^\infty$-differentiable algebra $C^\infty(\R^m)/W_{Y/\R^m}$ can be treated as `functions' on a $C^\infty$-differentiable space, which is denoted by $\mathbf{W}_{Y/\R^m}$; see, e.g., \cite[p.\,60, Corollary~5.10]{NaSa}. We denote this algebra by $C^\infty(\mathbf{W}_{Y/\R^m})$.

In at least four cases we can naturally identify the underlying locally convex space of a  finitely $C^\infty$-generated algebra with $C^\infty(\mathbf{W}_{\Om/\R^m})$ for some $m$ and $\Om$: the quantum plane, the quantum group $\SL_q(2,\R)$ and a triangular finite-dimensional Lie algebra (see Theorems~4.3,  4.10 and~4.12, in \cite{ArNew}) and also a deformation of the quantum group $\fa\ff_1$;  see Theorem~\ref{multCifgqua}. On the other hand, in the free algebra case the underlying space is more complicated (Theorem~\ref{fuctcafree}). Thus it is interesting to know under which conditions  the underlying space has the form $C^\infty(\mathbf{W}_{\Om/\R^m})$. For example, is the finiteness of the Gelfand--Kirillov dimension is necessary?
\end{rem}

\end{document}